%% file: main.tex
\documentclass[a4paper, 10pt]{amsproc}
\usepackage{defs}
\usepackage{orcidlink}
\usepackage{calc}
\usepackage{graphicx,wrapfig,lipsum}
\usepackage[dvipsnames]{xcolor}
\usepackage{tabularray}
\usepackage{empheq} 
\newtcolorbox{empheqboxed}{colback=white, 
    colframe=black,
    boxrule=0.25mm,
    width=\columnwidth,
    sharpish corners,
    top=-2mm, 
    left=2pt,
    bottom=5pt
}
\definecolor{metablue}{HTML}{0064E0}
\definecolor{metafg}{HTML}{1C2B33}
\definecolor{metabg}{HTML}{F1F4F7}
\definecolor{metabgdeep}{HTML}{D9EFFF}
\definecolor{metagreen}{HTML}{EAFFE8}
\definecolor{metagreen}{HTML}{FCFFEE}
\definecolor{metared}{HTML}{FFEAE8}

\newtheorem{theorem}{Theorem}

\newtheorem{remark}{Remark}

\newtheorem{proposition}{Proposition}
\newtheorem{lemma}{Lemma}
\newtheorem{corollary}{Corollary}

\DeclareSymbolFont{extraup}{U}{zavm}{m}{n}
\DeclareMathSymbol{\varheart}{\mathalpha}{extraup}{86}
\DeclareMathSymbol{\vardiamond}{\mathalpha}{extraup}{87}
\DeclareMathSymbol{\varclub}{\mathalpha}{extraup}{84}
\DeclareMathSymbol{\vardspade}{\mathalpha}{extraup}{85}

\usepackage[framemethod=tikz,xcolor=true]{mdframed}

\newmdenv[backgroundcolor=metabg, roundcorner=0pt, skipabove=4pt, linewidth=0pt, innertopmargin=8pt]{myframe}

\newmdenv[backgroundcolor=metabgdeep, roundcorner=10pt, skipabove=4pt, linewidth=1pt, innertopmargin=4pt]{myOCP}

\newmdenv[backgroundcolor=metared, roundcorner=10pt, skipabove=7pt, linewidth=0pt, innertopmargin=7pt]{myalgo}
\newmdenv[%
    leftmargin=0.5cm,
    backgroundcolor=yellow!10,%
    roundcorner=5pt,%
    tikzsetting={draw=red, line width=2.0pt}%
    ]{SpecialText}%

    \newmdenv[
  backgroundcolor=metabg,
  roundcorner=5pt,
  skipabove=4pt,
  linewidth=0.5pt,
  innertopmargin=1pt,
  tikzsetting={draw=black, line width=0.5pt},
  frametitlefont=\bfseries,
  frametitlebackgroundcolor=blue!10,
  frametitleaboveskip=0.5ex,
  frametitlebelowskip=0.5ex
]{myOCPboxTitle}

\usepackage{cleveref}

\newcounter{ocpproblem}

\crefname{ocpproblem}{Problem}{Problems}
\Crefname{ocpproblem}{Problem}{Problems}

\newenvironment{problembox}[1][]{%
  \refstepcounter{ocpproblem}%
  \begin{myOCPboxTitle}[frametitle={Problem~\theocpproblem#1}]%
}{%
  \end{myOCPboxTitle}%
}

\title[GMM-SB]{Lifted Schr\"{o}dinger Bridges for Gaussian Mixture Endpoints: Projection Gaps and Path-Space Obstructions}

\author[S. Ganguly]{Siddhartha Ganguly\,\orcidlink{0000-0003-2046-2061}}
\author[G. Rapakoulias]{George Rapakoulias\,\orcidlink{0009-0000-8572-710X}}
\author[P. Tsiotras]{Panagiotis Tsiotras\,\orcidlink{0000-0001-7563-4129}}

\thanks{%
S. Ganguly, G. Rapakoulias, and P. Tsiotras are with \faGroup\ Daniel Guggenheim School of Aerospace, \faUniversity\ Georgia Institute of Technology, \faMapMarker\  Atlanta, USA.
	Contact Information: (SG) \faHome\ \url{https://sites.google.com/view/siddhartha-ganguly}, \faEnvelope\ \texttt{sganguly41@gatech.edu}, (GR) \faEnvelope\ \texttt{grap@gatech.edu}, (PT) \faHome\ \url{https://dcsl.gatech.edu/tsiotras.html}, \faEnvelope\ \texttt{tsiotras@gatech.edu}.
}

\begin{document}
\subjclass[2020]{93E20, 93E03, 49K20, 49L99, 58E25, 65K10} 
\keywords{Stochastic optimal control, Schr\"{o}dinger bridges, optimal transport.}

\maketitle

\begin{abstract}
We study stochastic density control between Gaussian-mixture endpoint distributions under Brownian prior dynamics. Since the direct Schr\"{o}dinger bridge between Gaussian mixtures is generally not available in closed form, we introduce a \emph{lifted path-space construction} in which each trajectory is augmented with a source--target component label. 
Consequently, the problem decomposes into Gaussian component-to-component Schr\"{o}dinger bridges with explicit marginal, drift, and cost formulas, while the mixture-level assignment reduces to a finite-dimensional entropic coupling problem with a Sinkhorn scaling form. We then analyze the projection obtained by discarding or forgetting the label. 
By construction, the projected law satisfies the original Gaussian-mixture endpoint constraints, but its relative entropy generally differs from the lifted relative entropy by a nonnegative conditional label-information gap. This gap reveals a \emph{path-space obstruction}: the lifted optimizer cannot, in general, be identified with the direct unlabeled Schr\"{o}dinger bridge after projection.
We also derive the posterior-averaged Markov drift associated with the projected marginal flow, prove a kinetic-energy upper bound, and identify a common path-potential condition under which the projection gap vanishes. Several numerical illustrations showing density and shape control are recorded for a self-contained exposition.
\end{abstract}

\input{sections/intro_new}

\input{sections/2-Problem_Setup}

\input{sections/3-Main_Result}

\input{sections/4-Numerics}

\input{sections/Conclusion}

\input{sections/appendix}

\bibliographystyle{amsalpha}
\bibliography{refs}

\end{document}

%% file: sections/intro_new.tex
\section{Introduction}\label{sec:introduction}
The Schr\"{o}dinger bridge problem, originally posed by Erwin Schr\"{o}dinger as a thought experiment \cite{ref:SB:original:I,ref:SB:original:II}, provides a stochastic counterpart of optimal transport \cite{ref:OT:book:Villani,ref:Santam:OT:book,ref:santambrogio2023course}: among all path measures matching two prescribed endpoint distributions, one seeks the most likely evolution relative to a prior stochastic dynamics \cite{ref:Leonard:SB,ref:OT:Diff:GP,ref:GP:MC:OT:book,follmer1988random}.
In the Brownian case, and more generally for diffusion priors, this variational problem is equivalent, through Girsanov's theorem \cite[Chapter 3, \S 3.5]{ref:Shreve:Karatzas}, \cite{ref:YC:TG:MP:SB-and-OT,ref:SOC:SB:YC}, to a stochastic optimal control problem with a quadratic control energy cost.  
This viewpoint has led to a rich set of connections between entropy minimization, optimal transport, stochastic control \cite{ref:dai:pra:1990markov,ref:dai:pra:1991stochastic,ref:AH:Lie:Group:SB,ref:Adu:SB:SubRiemann,ref:Adu:SB:Average:Sys}, covariance steering \cite{ref:KI:KK:Disc:SB,ref:LQR:SB,ref:GR:AP:PS:MF-SB}, and computational algorithms based on Sinkhorn-type scaling \cite{ref:SB:3,ref:SB:1,ref:SB:4,ref:caluya2021wasserstein,ref:SB:halder:exact}.

In parallel, Schr\"{o}dinger bridge formulations have recently become increasingly visible in modern generative modeling \cite{ruthotto2021introduction}. 
Diffusion and score-based models can be interpreted as stochastic transports \cite{ref:OT:Diff:GP} between a simple prior distribution and a data distribution.  
Several recent works formulate or approximate this transport through Schr\"{o}dinger bridge, entropic optimal transport, score matching, or flow matching perspectives \cite{de2021diffusion,song2020score,shi2023diffusion,tong2023simulation,lipman2022flow,liu2022deep, liu2024generalized}. 
For example, diffusion Schr\"{o}dinger bridge methods connect score-based generative modeling with finite-time entropy-regularized transport, while bridge-matching and simulation-free formulations place diffusion and flow-matching models within a common stochastic transport framework. 
These developments show that SBs are not only a classical object in stochastic control and optimal transport \cite{ref:adj:match:soc}, but also provide a useful mathematical language for generative sampling, data-to-data translation, and learned distributional dynamics.

Despite this general theory, obtaining explicit and computationally viable density-steering laws remains difficult for non-Gaussian endpoint distributions.
Gaussian endpoints are, however, special: under Brownian or linear-Gaussian priors, the corresponding Schr\"{o}dinger bridge remains Gaussian, and its marginal flow, optimal drift, and energy cost admit closed-form or finite-dimensional descriptions \cite{ref:SB:closed:form,mallasto2022entropy, chen2016optimal3}. 
Related attempts to analytically verify tractable classes of optimal-transport or transport-type problems beyond the purely Gaussian setting include Gaussian-mixture and Wasserstein-type constructions ~\cite{chen2018optimal, delon_wasserstein-type_2020, ref:GoWithTheFlow}.
However, many density-control and generative-modeling problems of interest are intrinsically multimodal. A natural and expressive model class is given by \emph{Gaussian mixture models}
\[
    \rho_0 \Let \sum_{i=1}^{N_1}\alpha_i^0 p_i^0,\qquad
    \rho_1 \Let \sum_{j=1}^{N_2}\alpha_j^1 p_j^1,
\]
where each component $p_i^0$ and $p_j^1$ is Gaussian, but the full endpoint densities are generally \emph{non-Gaussian and multimodal}. From the viewpoint of generative modeling, such mixtures provide a simple yet analytically revealing model of multimodal distributions: they retain Gaussian component-wise structure while exhibiting the ambiguity in assigning mass between distinct modes. 
The direct Schr\"{o}dinger bridge between such mixtures is not, in general, known to remain within a finite Gaussian-mixture class with closed-form componentwise dynamics. Moreover, the mixture representation introduces a structural ambiguity: the state process itself does not carry the component label and therefore does not specify which initial component should be paired with which terminal component.
Thus, while the Gaussian components suggest a tractable decomposition, the original stochastic dynamics remains unlabeled.

This observation motivates the following questions we address in this work:
\begin{myOCP}
 \begin{enumerate}[label={\textup{(\(\mathsf{Q}\)-\alph*)}}, leftmargin=*, widest=b, align=left]
\item \label{ques:1} Can the Gaussian-mixture structure be exploited without reducing the problem to a purely Gaussian one?
\item \label{ques:2} Can this be achieved by lifting the path space with an auxiliary source--target component label, so that the mixture-to-mixture construction decomposes into tractable Gaussian component-to-component subproblems together with a principled rule for assigning mass between components?
\item \label{ques:3}  After solving the resulting lifted, labeled problem on the augmented path space, what precisely is its relationship with the original unlabeled Schr\"{o}dinger bridge problem on the state path space?
\item \label{ques:4} Can the projected, state-only law be represented by a Markov feedback drift that steers the prescribed Gaussian-mixture endpoints?
\end{enumerate}   
\end{myOCP}

\begin{figure}[b]
\centering
\begin{tikzpicture}[
    >=Latex,
    node distance=8mm and 8mm,
    box/.style={
        draw,
        rounded corners=3pt,
        align=center,
        inner sep=4pt,
        text width=0.28\textwidth,
        minimum height=1.65cm,
        font=\scriptsize,
        line width=0.55pt,
        blur shadow={
            shadow xshift=1.4pt,
            shadow yshift=-1.4pt,
            shadow blur steps=9,
            opacity=.85
        }
    },
    bluebox/.style={
        box,
        fill=blue!5,
        draw=blue!55!black
    },
    tealbox/.style={
        box,
        fill=cyan!6,
        draw=cyan!45!black
    },
    orangebox/.style={
        box,
        fill=orange!8,
        draw=orange!60!black
    },
    greenbox/.style={
        box,
        fill=green!6,
        draw=green!45!black
    },
    purplebox/.style={
        box,
        fill=purple!6,
        draw=purple!50!black
    },
    redbox/.style={
        box,
        fill=red!4,
        draw=red!55!black,
        dashed
    },
    arrow/.style={
        -{Stealth[length=2.2mm,width=1.8mm]},
        line width=0.85pt,
        draw=black!70,
        line cap=round,
        shorten <=1pt,
        shorten >=2pt
    },
    dashedarrow/.style={
        -{Stealth[length=2.2mm,width=1.8mm]},
        line width=0.8pt,
        draw=red!65!black,
        dashed,
        line cap=round,
        shorten <=1pt,
        shorten >=2pt
    }
]

\node[bluebox] (data) {
\textbf{GMM endpoints}\\[0.5mm]
\[
\rho_0=\sum_i \alpha_i^0 p_i^0,\,
\rho_1=\sum_j \alpha_j^1 p_j^1
\]
Original dynamics is unlabeled on \(\Omega\)
};

\node[tealbox, right=of data] (pairwise) {
\textbf{Pairwise Gaussian SBs}\\[0.5mm]
For each \((i,j)\), compute
\[
\probmeas^{ij},\,\rho_t^{ij},\,u_t^{ij},\,C_{ij}
\]
Gaussian component-to-component layer (Prop. \ref{prop:gaussian_endpoint_coupling} and \ref{prop:pairwise_gaussian_bridge_formulas})
};

\node[orangebox, right=of pairwise] (coupling) {
\textbf{Lifted label coupling}\\[0.5mm]
Solve, for \(\pi\in\Pi(\alpha^0,\alpha^1)\),
\[
\min_{\pi}\;
\sum_{i,j}\pi_{ij}C_{ij}
+\varepsilon\KL(\pi\|\eta)
\]
Sinkhorn form when \(\eta_{ij}>0\)
};

\node[tealbox, below=of data] (lifted) {
\textbf{Lifted labeled law}\\[0.5mm]
Introduce \(\mathcal{Z} \Let I\times J\):
\[
\liftedmeas
=
\sum_{i,j}\pi_{ij}\delta_{(i,j)}
\otimes \probmeas^{ij}
\]
\[
\bar{\refmeas}^{\eta}
=
\sum_{i,j}\eta_{ij}\delta_{(i,j)}
\otimes \mathsf R_i
\]
Exact on \(\mathcal{Z}\times\Omega\)
};

\node[greenbox, below=12mm of pairwise] (projection) {
\textbf{Projection and entropy gap}\\[0.5mm]
Forget the label: \(\optmeas^\pi=(\proj_\Omega)_\#\liftedmeas\)
\[
\resizebox{0.96\linewidth}{!}{\(\displaystyle \KL(\liftedmeas\|\bar{\refmeas}^{\eta})=\KL(\optmeas^\pi\|\mathsf R_{\rho_0})+\gap(\pi,\eta)\)}
\]
(Thrm. \ref{thm:finite_dim_reduction} and \ref{thm:projection-gap-identity})
};

\node[greenbox, below=of coupling] (drift) {
\textbf{Projected Markov drift}\\[0.5mm]
\(\rho_t^\pi = \sum_{i,j}\pi_{ij}\rho_t^{ij}\)
\(\bar u_t^\pi(x) =\sum_{i,j}\gamma_{ij}(t,x)u_t^{ij}(x)\)
\(\gamma_{ij}(t,x) = \frac{\pi_{ij}\rho_t^{ij}(x)}{\rho_t^\pi(x)}\)
\[
\bar u_t^\pi(x_t)=\E^{\optmeas^\pi}[b_t^\pi\mid x_t]
\]
Markov projection of the hidden-label drift
};

\node[purplebox, below=15mm of projection, text width=0.4\textwidth, xshift=-10mm] (guarantees) {
\textbf{Feasibility, energy, and Markovization}\\[0.5mm]
\((\rho_t^\pi,\bar u_t^\pi)\) satisfies the Fokker--Planck equation and \(\rho_0^\pi=\rho_0,\, \rho_1^\pi=\rho_1,\) also we have: (a) energy bound (Prop. \ref{prop:projected_feasibility_energy}) and (b) entropy decomposition (Thrm. \ref{thrm:gaps:all:together}).
};

\node[redbox, right=of guarantees] (zerogap) {
\textbf{Special zero-gap case}\\[0.5mm]
Under certain structural conditions 
\[
\gap(\pi,\eta)=0.
\]
(Prop. \ref{prop:common-potential-zero-gap} and Rem. \ref{rem:common-potential-interpretation})
};

\draw[arrow] (data.east) -- (pairwise.west);
\draw[arrow] (pairwise.east) -- (coupling.west);

\draw[arrow] (pairwise.south west) -- (lifted.north east);
\draw[arrow] (coupling.south west) -- (lifted.north east);

\draw[arrow] (lifted.east) -- (projection.west);
\draw[arrow] (projection.east) -- (drift.west);

\draw[arrow] (projection.south) -- (guarantees.north);
\draw[arrow] (drift.south west) -- (guarantees.north east);

\draw[dashedarrow] (projection.south east) -- (zerogap.north west);

\end{tikzpicture}
\caption{Roadmap of our lifted Schr\"{o}dinger bridge construction.}
\label{fig:lifted-sb-roadmap}
\end{figure}

\textcolor{black}{These questions brings us to the main conceptual contribution of this manuscript, namely, recast Gaussian-mixture Schr\"{o}dinger bridge constructions from a lifted path-space perspective. 
Rather than treating a Gaussian-mixture bridge only as a computational superposition of pairwise Gaussian bridges, we formulate an augmented entropy-minimization problem on a label--trajectory space, in which the source--target component assignment is itself a probabilistic object. 
In this formulation, the prior component coupling \(\eta\) specifies the reference assignment structure in the lifted space, while the optimized coupling \(\pi\) balances this prior assignment against the pairwise Gaussian bridge costs. 
This viewpoint separates three issues that are often conflated: the exact solution of the lifted labeled problem, the feasibility of its projection onto the original unlabeled path space, and the information gap that prevents the projected law from being identified, in general, with the direct unlabeled Schr\"{o}dinger bridge. 
The detailed contributions are as follows (a schematic is given in Figure \ref{fig:lifted-sb-roadmap}).}

\begin{itemize}[leftmargin=*]
    \item \textbf{Lifted formulation for Gaussian-mixture endpoints.}
    We address \ref{ques:1} by augmenting each trajectory with an auxiliary source--target component label. For a given label \((i,j)\), the problem becomes a Gaussian Schr\"{o}dinger bridge from the initial component \(p_i^0\) to the terminal component \(p_j^1\). 
    Thus, the Gaussian-mixture representation is used as a structural decomposition device. This is an exact construction on the augmented label--trajectory space; it is not, by itself, an assertion that the resulting projected law is the optimizer of the original unlabeled Schr\"{o}dinger bridge problem.

    \item \textbf{Finite-dimensional entropic assignment of component mass.}
    We address \ref{ques:2} by showing that, after the pairwise Gaussian bridge costs have been computed, the remaining lifted problem reduces to a finite-dimensional entropic coupling problem over the mixture weights. 
    When the prior coupling \(\eta\) has full support, the optimizer has a Sinkhorn scaling form. This separates the componentwise continuous bridge calculations from the discrete component-assignment problem. 
  
    \item \textbf{Projection gap between the lifted and unlabeled problems.}
    We address \ref{ques:3} by projecting the lifted law back to the original path space, i.e., by forgetting the auxiliary label.
    The projected law is feasible for the prescribed Gaussian-mixture endpoints. However, its relative entropy with respect to the original unlabeled Brownian reference generally differs from the lifted relative entropy. 
    We prove a projection identity showing that the lifted entropy decomposes into the projected path-space entropy plus a nonnegative conditional label-information gap. This gap identifies precisely the obstruction to interpreting the lifted optimizer as the optimizer of the direct unlabeled Schr\"{o}dinger bridge problem. 
    We also give a structural common path-potential condition under which this projection gap vanishes, showing that the gap is not an unavoidable artifact of lifting.

    \item \textbf{Posterior-averaged Markov density control.}
    We address \ref{ques:4} by constructing a state-only Markov drift obtained by posterior averaging of the pairwise Gaussian bridge drifts. 
    The resulting density--drift pair satisfies the Fokker--Planck equation, matches the prescribed endpoint densities, and obeys a kinetic-energy upper bound. 
    Posterior-averaged Gaussian-mixture bridge feedback laws have appeared in GMMflow-type constructions \cite{ref:GoWithTheFlow}; our contribution is to identify such feedback as the Eulerian Markov projection of a lifted labeled Schr\"{o}dinger bridge and to separate \emph{three distinct objects}: 
    optimality in the augmented labeled space, feasibility after projection to the original state space, and the information gap between the lifted and unlabeled path-space laws. 
    Numerical examples illustrate the lifted construction and projected Markov feedback for Gaussian-mixture steering.
\end{itemize}

\textbf{Notation:} We employ standard notation throughout the article. 
Let \(\N \Let \aset{1,2,\ldots}\) 
denote the set of positive integers. Given \(d\in\N\), we denote the \(d\)-dimensional Euclidean space by \(\R[d]\), equipped with the standard inner product \((x,y)\mapsto x^\top y\) and norm \(x\mapsto \norm{x}\). For \(p\in\N\), \(\mathrm I_p\) denotes the \(p\times p\) identity matrix. For \(q\in\N\), \(\mathbb S^q\), \(\mathbb S^q_+\), and \(\mathbb S^q_{++}\) denote the sets of \(q\times q\) real symmetric, real symmetric positive semidefinite, and real symmetric positive definite matrices, respectively. 
For \(A\in\mathbb S^q_{++}\), \(A^{1/2}\) denotes the unique positive definite square root of the matrix $A$. 
The trace and determinant of a square matrix \(A\) are denoted by \(\tr(A)\) and \(\det(A)\), respectively. Given a metric space \(X\), we write \(\Borelsigalg(X)\) for its Borel sigma-algebra and \(\polish(X)\) for the set of probability measures on \((X,\Borelsigalg(X))\). If \(X\) and \(Y\) are metric spaces, \(T:X\to Y\) is measurable, and \(\mu\in\polish(X)\), then the pushforward of \(\mu\) by \(T\), denoted by \(T_\#\mu\in\polish(Y)\), is defined by
\(\Borelsigalg(Y) \ni A \mapsto (T_\#\mu)(A)\Let \mu(T^{-1}(A))\). 
For \(z\in X\), \(\delta_z\) denotes the Dirac measure at \(z\). For two probability measures \(\mu,\nu\) on a common measurable space \(X\), we write \(\mu\ll\nu\) to mean that \(\mu\) is absolutely continuous with respect to \(\nu\).
The support of a measure is denoted by \(\operatorname{supp}(\cdot)\). For a nonnegative matrix \(\pi \Let (\pi_{ij})\) indexed by finite sets \(\indexsource\) and \(\indexsink\), we write \(\operatorname{supp}\pi \Let \aset[]{(i,j)\in\indexsource\times\indexsink \suchthat \pi_{ij}>0}\). We use \(\mathcal{N}(m,\Sigma)\) to denote the Gaussian law with mean \(m\) and covariance \(\Sigma\), and also its density when the meaning is clear from context.
The relative entropy of \(\mu\) with respect to \(\nu\) is denoted by \(\KL(\mu\|\nu) \Let
\int_{X}\log\left(\frac{\dd\mu}{\dd\nu}\right)\dd\mu\) when \(\mu\ll\nu\) and \(+\infty\), otherwise. For a random variable \(X\) defined on a probability space with law \(Q\), we write
\(\Law_Q(X)\) for its distribution and \(\E^Q[\cdot]\) for expectation under \(Q\). When the underlying law is clear from context, we  write \(\E[\cdot]\).

%% file: sections/2-Problem_Setup.tex

\section{The stochastic optimal control problem}\label{sec:problem}

Fix \(d\in\N\) and a noise level \(\eps>0\).
Let \(\Omega \Let \pathsp(\lcrc{0}{1};\R[d])\) be the space of continuous paths equipped with the Borel \(\sigma\)-algebra, where \(\Omega\) is endowed with the uniform topology. Let \(\Omega \ni \omega\mapsto x_t(\omega)\Let \omega(t) \in \R[d]\) be the canonical process on \(\Omega\).\footnote{All stochastic processes below may be realized on rich enough complete filtered probability spaces with suitably defined filtrations; however, since the analysis is carried out at the level of induced path laws, we work on the canonical path space \(\Omega\) and identify each controlled diffusion with its law on \(\Omega\).} 
We consider controlled diffusions on \(\R[d]\) of the form
\begin{equation}\label{eq:dynamics}
\dd x_t = u_t(x_t)\,\dd t + \sqrt{\eps}\,\dd w_t\quad \text{for } t\in\lcrc{0}{1},
\end{equation}
with the following data: 
\begin{enumerate}[label={\textup{(\ref{eq:dynamics}-\alph*)}}, leftmargin=*, widest=b, align=left]
\item \label{prob:data:1} \((w_t)_{t\in \lcrc{0}{1}}\) is an \(\R[d]\)-valued standard Brownian motion;
\item \label{prob:data:2} The drift \(u\) is taken from the regular feedback class \(\mathcal U_{\mathrm{reg}}\), consisting of Borel measurable maps
\begin{align}
    u:\lcrc{0}{1}\times\R[d] \lra \R[d], \quad \lcrc{0}{1} \times \R[d] \ni (\tau,\xi) \mapsto u_{\tau}(\xi) \in \R[d],  \nn
\end{align}
such that \(u_t(\cdot)\) is locally Lipschitz in \(x\), uniformly on compact sets with time-integrable local Lipschitz constants, and has at most linear growth in \(x\). More precisely, for every \(R>0\), there exists \(L_R\in L^1(0,1)\) such that, for a.e. \(t\in\lcrc{0}{1}\),
\begin{align}
    \norm{u_t(x)-u_t(y)}\le L_R(t)\norm{x-y}\quad \text{for all }\norm{x},\norm{y}\le R, \nn
\end{align}
 and there exists \(a\in L^2(0,1)\) such that,
for all \(x\in\R[d]\) and for a.e. \(t\in\lcrc{0}{1}\),
\begin{align}
    \norm{u_t(x)} \le a(t)(1+\norm{x}). \nn
\end{align}
\end{enumerate}
Under the regularity assumptions \ref{prob:data:1}--\ref{prob:data:2}, for every initial law with finite second moment, \eqref{eq:dynamics} admits a \emph{unique strong solution} \cite[Chapter 5, Theorem 5.2.9]{ref:Shreve:Karatzas}, \cite[Chapter 1, \S 6.4]{ref:Stoc:Control:book}, \cite{ref:VSB:Diff:1}.
In particular, the induced law of the process on \(\Omega\) is well-defined.

Fix \(N_1,N_2 \in \N\). Let \(m_i^0,m_j^1\in\R[d]\) and \(\Sigma_i^0,\Sigma_j^1\in\mathbb S_{++}^d\).
Consider the following weights that satisfy
\begin{align}
    \alpha_i^0,\alpha_j^1 > 0, \qquad \sum_{i=1}^{N_1}\alpha_i^0=1, \qquad
    \sum_{j=1}^{N_2}\alpha_j^1=1. \nn
\end{align} 
Let \(\indexsource \Let \aset[]{1,\dots,N_1}\) and \(\indexsink \Let \aset[]{1,\dots,N_2},\) and define the Gaussian densities
\begin{align}
    p_i^0 \Let \mathcal N(m_i^0,\Sigma_i^0),\qquad
    p_j^1 \Let \mathcal N(m_j^1,\Sigma_j^1),\qquad i \in \indexsource , j \in \indexsink. \nn
\end{align}
The initial and terminal densities are \(\dummyx\mapsto \rho_0(\dummyx)\) and \(\dummyx\mapsto \rho_1(\dummyx)\), respectively, which are defined on \(\R[d]\), and have the Gaussian-mixture specification
\begin{equation}\label{eq:mixture_endpoints}
    \dummyx \mapsto \rho_0(\dummyx)
    \Let
    \sum_{i=1}^{N_1}\alpha_i^0\,p_i^0(\dummyx),
    \qquad
    \dummyx \mapsto \rho_1(\dummyx)
    \Let
    \sum_{j=1}^{N_2}\alpha_j^1\,p_j^1(\dummyx).
\end{equation}
We define the admissible control set associated with the endpoint pair
\((\rho_0,\rho_1)\) by
\begin{align}
\mathcal U(\rho_0,\rho_1)  \Let \left\{  u\in\mathcal U_{\mathrm{reg}} \;\middle\vert\;  
\begin{array}{@{}l@{}}
\text{the solution of \eqref{eq:dynamics} with }x_0\sim\rho_0
    \text{ satisfies }x_1\sim\rho_1,\\[0.35em]
    \displaystyle
    \E\left[\int_0^1\norm{u_t(x_t)}^2\,\dd t\right]<+\infty
\end{array}
\right\}.\nn
\end{align}
The stochastic density-control problem is
\begin{problembox}\label{prob:density-control}
\begin{equation*}
    \inf_{u\in\mathcal U(\rho_0,\rho_1)}
    \E\left[\int_0^1 \frac{1}{2}\norm{u_t(x_t)}^2\,\dd t
    \right] =
    \begin{aligned}
    & \inf_{u\in\mathcal U_{\mathrm{reg}}}
    &&  \E\left[\int_0^1 \frac{1}{2}\norm{u_t(x_t)}^2\,\dd t
    \right]  \\
    & \sbjto
    &&  \begin{cases}
    \text{dynamics }\eqref{eq:dynamics},\,
    x_0\sim\rho_0,\, x_1\sim\rho_1,\\
    \E\int_0^1\norm{u_t(x_t)}^2\,\dd t<+\infty.
    \end{cases}
    \end{aligned}
\end{equation*}
\end{problembox}

\begin{remark}[Extension to linear stochastic control systems]
The lifted Schr\"{o}dinger bridge construction which we will present in the sequel is not limited to the Brownian prior
\begin{align}
    \dd x_t=u_t(x_t)\,\dd t+\sqrt{\eps}\,\dd w_t, \nn
\end{align}
and extends in the same spirit to linear stochastic systems of the form  \cite{chen2016optimal, mei_flow_2024}
\begin{align}
\dd x_t=A(t)x_t\,\dd t+B(t)v_t(x_t)\,\dd t+\sqrt{\eps}\,B(t)\,\dd w_t, \nn
\end{align}
with the usual problem data. 
In that case, the reference process is the uncontrolled linear diffusion, and the control cost in \Cref{prob:density-control} is measured in terms of the input \(v_t\), not the total drift. 
The pairwise component bridges remain linear-Gaussian, so the Brownian pairwise formulas in the sequel are simply replaced by their corresponding linear-system analogues. At the structural level, however, the rest of the theory in the sequel will remain unchanged: the lifted KL decomposition, the finite-dimensional entropic coupling problem, the Sinkhorn scaling form, the projected density construction, the energy bound, and the projection-gap identity all continue to hold in the same form.
\end{remark}

%% file: sections/3-Main_Result.tex

\section{Main results}

\Cref{prob:density-control} is posed on the original state space \(\R[d]\) and is therefore unlabeled: the component indices \(i\in\indexsource\) and \(j\in\indexsink\) in the Gaussian-mixture representation are not observed state variables of the controlled diffusion. 
They only encode a chosen decomposition of the endpoint densities. In this section, we use these component labels as auxiliary variables to construct a lifted path-space formulation. 
This lifted formulation decomposes the mixture-to-mixture problem into pairwise component Schr\"{o}dinger bridges coupled through a finite-dimensional transport plan.

We first recall the path-space relative-entropy representation of the stochastic control cost, then define the pairwise component bridges, and finally show that the lifted problem reduces exactly to a finite-dimensional entropic coupling problem over the mixture weights. 
This construction should be interpreted as an exact reduction of a labeled, lifted problem. Its relation to the original unlabeled density-control problem is addressed afterward through a projection identity.


\subsection{Uncontrolled reference law}

Fix an initial law \(\nu \in \polish(\R[d])\) and let \(\mathsf R^\nu \in \polish(\Omega)\) denote the path law on \(\Omega\) of the uncontrolled diffusion
\begin{equation}\label{eq:uncontrolled}
\dd x_t =\sqrt{\eps}\,\dd w_t \quad \text{with }\Law(x_0) \Let \nu.
\end{equation}
For probability measures \(\optmeas\) and \(\refmeas\) on \((\Omega,\borel(\Omega))\), recall the definition of KL divergence \(\KL(\optmeas\|\refmeas)\). 
Let \(u\in\mathcal U_{\mathrm{reg}}\) and let \(\optmeas^u\) denote the path
law induced by the controlled diffusion
\[
    \dd x_t = u_t(x_t)\,\dd t + \sqrt{\eps}\,\dd w_t \quad \text{with }\Law(x_0)\Let \nu .
\]
Whenever \(\optmeas^u\ll \mathsf R^\nu\), Girsanov's theorem \cite{ref:YC:TG:MP:SB-and-OT} gives
\begin{equation}\label{eq:kl_energy}
    \KL(\optmeas^u\|\mathsf R^\nu)=\frac{1}{2\eps}\E^{\optmeas^u}\left[\int_0^1
    \norm{u_t(x_t)}^2\,\dd t\right].
\end{equation}
Thus, the stochastic density-control cost in \S\ref{sec:problem} can be
represented as a path-space relative-entropy cost. 
In the sequel, this representation will be applied componentwise with \(\nu=p_i^0\), yielding the pairwise Schrödinger bridges between the Gaussian components of the initial and terminal mixtures.

\begin{remark}[Well-posedness of Problem \ref{prob:density-control}]\label{rem:on:well-posedness}
Problem \ref{prob:density-control} is a stochastic optimal control problem with fixed initial and terminal marginals.
Existence and duality results for such fixed-marginal stochastic control problems, including convex running costs, are classical; see, for example, \cite{ref:mikami2006duality,ref:mikami2008optimal,ref:Mikami:OT:book}. 
In the present Brownian setting, the Girsanov identity above identifies the quadratic cost with relative entropy with respect to the uncontrolled Brownian reference law, and the corresponding path-space formulation is the Brownian Schr\"{o}dinger bridge problem; see also \cite{ref:cattiaux1994minimization,ref:Leonard:SB,ref:YC:TG:MP:SB-and-OT}. 
Since the endpoint densities considered here are finite mixtures of nondegenerate Gaussian densities, they are smooth, strictly positive, and have finite second moments. 
\end{remark}


\subsection{Pairwise component SBs}
In the mixture setting \eqref{eq:mixture_endpoints}, each initial component \(p_i^0\) gives rise to its own uncontrolled reference law. For each \(i\in\indexsource\), define \(\refmeas^i \Let \refmeas^{p_i^0},\) i.e., \(\refmeas^i\) is the law of the uncontrolled diffusion \eqref{eq:uncontrolled} with \(\nu=p_i^0\).
For each \((i,j)\in\indexsource\times\indexsink\), define the pairwise component Schr\"{o}dinger bridge by
\begin{equation}\label{eq:pairwise_SB_def}
    \probmeas^{ij}\in \argmin_{\aset[\big]{\optmeas \in \polish(\Omega) \suchthat \Law_{\optmeas}(x_0)=p_i^0,\,
    \Law_{\optmeas}(x_1)=p_j^1}} \KL(\optmeas\|\refmeas^i).
\end{equation}
Since \(p_i^0\) and \(p_j^1\) are nondegenerate Gaussian densities, this bridge is well defined. 
We denote by \(u^{ij}\) its optimal drift and by \(\rho_t^{ij}\) its time-\(t\) marginal density. Define the KL cost \(\kappa_{ij}\Let \KL(\probmeas^{ij}\|\refmeas^i),\) and the corresponding kinetic-energy cost by \(C_{ij}\Let \eps \kappa_{ij}.\) We also have by the Girsanov identity
\begin{equation}\label{eq:pairwise_energy}
    C_{ij}=\E^{\probmeas^{ij}}\left[\int_0^1\frac{1}{2}
    \norm{u_t^{ij}(x_t)}^2\,\dd t\right],
\end{equation}
i.e., \(C_{ij}\) is the minimum kinetic energy required to steer the initial Gaussian component \(p_i^0\) to the terminal Gaussian component \(p_j^1\), relative to the uncontrolled Brownian reference initialized at \(p_i^0\).


\subsection{Closed-form pairwise Gaussian Schr\"{o}dinger bridges}\label{subsec:Pairwise:formulas}

We now make the Gaussian structure of the pairwise component bridges explicit. 
Proposition~\ref{prop:gaussian_endpoint_coupling} below is standard \cite{ref:SB:closed:form,ref:YC:TG:MP:SB-and-OT} for Brownian Schr\"{o}dinger bridges between nondegenerate Gaussian marginals, but we state the results here in the notation of our manuscript, since they will be used directly to compute the pairwise costs \(C_{ij}\).

Fix \((i,j)\in \indexsource \times \indexsink\), and recall that
\[
p_i^0 \Let \mathcal N(m_i^0,\Sigma_i^0),\qquad p_j^1 \Let \mathcal N(m_j^1,\Sigma_j^1),
\]
with \(m_i^0,m_j^1\in\R[d]\) and \(\Sigma_i^0,\Sigma_j^1\in\mathbb S_{++}^d\). 
Let \(\probmeas^{ij}\) 
denote the pairwise Schr\"{o}dinger bridge defined in \eqref{eq:pairwise_SB_def}, and let \(\rho_t^{ij}\) and \(u_t^{ij}\) denote its time-\(t\) marginal density and optimal drift, respectively. 
We first state the Gaussian endpoint coupling associated with \(\probmeas^{ij}\).

\begin{myframe}
\begin{proposition}[Gaussian endpoint coupling for the pairwise bridge]
\label{prop:gaussian_endpoint_coupling}
For each \((i,j)\in \indexsource\times \indexsink\), the pairwise bridge \(\probmeas^{ij}\) is Gaussian. In particular, the endpoint pair \((x_0,x_1)\) under \(\probmeas^{ij}\) is jointly Gaussian:
\[
\mathrm{Law}_{\probmeas^{ij}}(x_0,x_1)=\mathcal N\!\left(\begin{bmatrix} m_i^0\\ m_j^1 \end{bmatrix},\begin{bmatrix} \Sigma_i^0 & \Sigma_{01}^{ij}\\ (\Sigma_{01}^{ij})^{\top} & \Sigma_j^1 \end{bmatrix}\right),
\]
where
\[
\Sigma_{01}^{ij}\Let (\Sigma_i^0)^{1/2}\Xi_{ij}(\Sigma_i^0)^{-1/2}-\frac{\eps}{2}I,\qquad \Xi_{ij}\Let \left((\Sigma_i^0)^{1/2}\Sigma_j^1(\Sigma_i^0)^{1/2}+\frac{\eps^2}{4}I\right)^{1/2}.
\]
\end{proposition}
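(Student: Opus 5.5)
The plan is to use the standard static--dynamic reduction of the Brownian Schr\"{o}dinger bridge. By the disintegration of relative entropy with respect to the endpoint map \((x_0,x_1)\),
\[
\KL(\optmeas\|\refmeas^i)=\KL\big(\Law_{\optmeas}(x_0,x_1)\,\|\,\Law_{\refmeas^i}(x_0,x_1)\big)+\E^{\optmeas}\big[\KL(\optmeas^{x_0,x_1}\|\refmeas^{i,x_0,x_1})\big],
\]
and the second term vanishes exactly when the bridges of \(\optmeas\) coincide with the Brownian bridges of \(\refmeas^i\). Hence \(\probmeas^{ij}=\int \refmeas^{\cdot}_{x_0,x_1}\,\dd\pi^{ij}(x_0,x_1)\), where \(\pi^{ij}\) minimizes \(\KL(\pi\|R^i_{01})\) over couplings of \(p_i^0\) and \(p_j^1\). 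Here \(R^i_{01}(\dd x_0\,\dd x_1)=p_i^0(x_0)\,\mathcal N(x_1;x_0,\eps I)\). Since \(p_i^0\) is fixed, this equals, up to an additive constant, minimizing
\[
\frac{1}{2\eps}\int\norm{x_1-x_0}^2\,\dd\pi-H(\pi)
\]
over \(\Pi(p_i^0,p_j^1)\). This is entropic OT with quadratic cost and regularization \(\eps\).

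\textbf{Gaussianity.} For fixed first and second moments of \(\pi\), the cost term depends only on those moments, while the entropy \(H(\pi)\) is maximized by the Gaussian with those moments. The Gaussian still has the prescribed Gaussian marginals. So the minimizer can be taken jointly Gaussian, and it is unique by strict convexity of KL. The path law is then a Gaussian mixture of Brownian bridges with affine dependence on the endpoints, hence Gaussian; this gives the first claim. The means of \((x_0,x_1)\) are forced to be \(m_i^0,m_j^1\). The cross term \(\int x_0^\top x_1\,\dd\pi\) depends only on the cross covariance \(S\), so the problem reduces to a finite-dimensional one over \(S\):
\[
\min_S\; -\frac{1}{\eps}\tr(S)-\frac12\log\det\big(\Sigma_j^1-S^\top(\Sigma_i^0)^{-1}S\big),
\]
subject to the joint covariance being positive definite.

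\textbf{First-order condition.} The gradient of the objective vanishes when
\[
\frac{1}{\eps}I=(\Sigma_i^0)^{-1}S\,M^{-1},\qquad M\Let \Sigma_j^1-S^\top(\Sigma_i^0)^{-1}S,
\]
that is, \(\eps(\Sigma_i^0)^{-1}S=M\). Set \(A=(\Sigma_i^0)^{1/2}\) and \(T=A^{-1}SA\). The main obstacle is solving this matrix quadratic and selecting the correct root; \(M\) symmetric forces \(\eps(\Sigma_i^0)^{-1}S\) symmetric. I would first try the ansatz \(S=A\,\Xi\,A^{-1}-\frac{\eps}{2}I\) with \(\Xi\) symmetric and commuting only with itself, and verify directly. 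One computes \(A^{-1}SA^{-1}\cdot\eps=\eps A^{-1}\big(A\Xi A^{-1}-\tfrac{\eps}{2}I\big)A^{-1}\), while the quadratic term becomes
\[
S^\top(\Sigma_i^0)^{-1}S=A^{-1}\big(\Xi-\tfrac{\eps}{2}I\big)\big(\Xi-\tfrac{\eps}{2}I\big)A^{-1}\cdot\ldots
\]
after conjugation. Matching both sides reduces to
\[
\Xi^2-\frac{\eps^2}{4}I=A\,\Sigma_j^1\,A,
\]
which is exactly the definition of \(\Xi_{ij}\). Choosing the positive definite square root makes \(M=\eps A^{-1}\big(\Xi-\tfrac{\eps}{2}I\big)A^{-1}\) positive definite, since \(\Xi\succ\tfrac{\eps}{2}I\). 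This establishes feasibility, and by strict convexity of the reduced problem on the feasible set it is the unique minimizer.

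The remaining check is bookkeeping of transposes. The congruence conventions \(A(\cdot)A^{-1}\) versus \(A^{-1}(\cdot)A\) must be matched to the stated \(\Sigma_{01}^{ij}\), using the symmetry of \(A\Xi A\)-type products; I expect this to be routine once the ansatz is fixed.
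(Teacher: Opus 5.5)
The paper does not prove this proposition. It quotes it as standard from the cited works (Bunne et al.; Chen--Georgiou--Pavon), and its appendix then uses the Brownian-bridge representation of \(\probmeas^{ij}\) for Proposition~\ref{prop:pairwise_gaussian_bridge_formulas}.

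Your proposal is correct. It shares the architecture of the cited argument: the bridge is the mixture of Brownian bridges over the optimal static coupling. It differs in how the static Gaussian coupling is found. The cited sources take it from the entropic-OT closed form, which is derived there via the Schr\"odinger system with quadratic potentials or, in the control formulation, via Riccati equations. You instead take a primal route. A Gaussian maximizes entropy at fixed second moments and keeps the prescribed Gaussian marginals, so the problem collapses to minimizing \(f(S)=-\frac{1}{\varepsilon}\operatorname{tr}S-\frac12\log\det\bigl(\Sigma_j^1-S^\top(\Sigma_i^0)^{-1}S\bigr)\) over the convex set where the joint covariance is positive definite. Any feasible stationary point is then the unique minimizer.

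This gives Gaussianity, existence and uniqueness at once. It also makes root selection a non-issue: you only need feasibility of the ansatz, which \(\Xi_{ij}\succ\frac{\varepsilon}{2}I\) provides. The dual route, in exchange, hands you the Schr\"odinger potentials and hence the drift directly. To make your strict-convexity claim airtight, use the Schur complement. Up to a constant, \(f(S)\) equals \(-\frac{1}{\varepsilon}\operatorname{tr}S-\frac12\log\det\) of the joint covariance, which is an injective affine function of \(S\), and \(-\log\det\) is strictly convex on positive definite matrices.

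Two slips in your verification need fixing. With \(A=(\Sigma_i^0)^{1/2}\), the left side of your stationarity condition is \(\varepsilon(\Sigma_i^0)^{-1}S=\varepsilon A^{-2}S\), not \(\varepsilon A^{-1}SA^{-1}\). For the ansatz it equals \(\varepsilon A^{-1}\bigl(\Xi_{ij}-\frac{\varepsilon}{2}I\bigr)A^{-1}\). The quadratic term is exactly \(S^\top A^{-2}S=A^{-1}\bigl(\Xi_{ij}-\frac{\varepsilon}{2}I\bigr)^2A^{-1}\), with no extra factor. Matching the two sides then gives \(\Xi_{ij}^2-\frac{\varepsilon^2}{4}I=A\Sigma_j^1A\), as you state. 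Your ansatz is literally the stated \(\Sigma_{01}^{ij}\), so no transpose bookkeeping remains, and the auxiliary \(T\) is never used.
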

\end{myframe}

Proposition~\ref{prop:gaussian_endpoint_coupling} yields an explicit description of the \((i,j)\)-bridge. 
The next result broadly adapts the technique given in  \cite[Theorem 3]{ref:SB:closed:form} to our setting to compute the component costs \(C_{ij}\). A proof is included in Appendix \ref{appen:proofs}. 

\begin{myframe}
\begin{proposition}[Closed-form pairwise Gaussian bridge]
\label{prop:pairwise_gaussian_bridge_formulas}
Fix \((i,j)\in \indexsource\times \indexsink\), and let \(\Sigma_{01}^{ij}\) be as in Proposition~\ref{prop:gaussian_endpoint_coupling}.
Then \(\Sigma_t^{ij}\in\mathbb S_{++}^d\) for every \(t\in\lcrc{0}{1}\) and the following hold.
\begin{enumerate}[label={\textup{(\ref{prop:pairwise_gaussian_bridge_formulas}-\alph*)}}, leftmargin=*, widest=b, align=left]

\item \label{prop:GSB:1} For every \(t\in \lcrc{0}{1}\), the time-\(t\) marginal under \(\probmeas^{ij}\) is Gaussian \(\rho_t^{ij}=\mathcal N(m_t^{ij},\Sigma_t^{ij}),\) with \(m_t^{ij} \Let (1-t)m_i^0+t\,m_j^1\) and \(\Sigma_t^{ij} \Let (1-t)^2\Sigma_i^0+t^2\Sigma_j^1+t(1-t)\big(\Sigma_{01}^{ij}+(\Sigma_{01}^{ij})^{\top}+\eps I\big).\)

\item \label{prop:GSB:2} For every \(t\in[0,1)\),
the bridge drift admits the affine representation \(x \mapsto u_t^{ij}(x) \Let A_t^{ij}(x-m_t^{ij})+c_{ij}\) with \(c_{ij}\Let m_j^1-m_i^0,\) where \(A_t^{ij}\Let S_t^{ij}(\Sigma_t^{ij})^{-1}\) and \(S_t^{ij}\Let (1-t)\big((\Sigma_{01}^{ij})^{\top}-\Sigma_i^0\big)+t\big(\Sigma_j^1-\Sigma_{01}^{ij}-\eps I\big).\)

\item \label{prop:GSB:3} 
The pairwise kinetic-energy cost is
\begin{align}
C_{ij} \Let \frac{1}{2}\int_{\lcrc{0}{1}\times \R[d]}\hspace{-2mm}\rho_t^{ij}(x)\,\|u_t^{ij}(x)\|^2\,\dd x\ \dd t=\frac{1}{2}\|m_j^1-m_i^0\|^2+\frac{1}{2}\int_0^1\hspace{-3mm}\operatorname{tr}\!\Big(A_t^{ij}\Sigma_t^{ij}(A_t^{ij})^{\top}\Big)\,\dd t. \nn
\end{align}
\end{enumerate}
\end{proposition}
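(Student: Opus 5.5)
The plan is to use the structure of Brownian bridges. By Proposition~\ref{prop:gaussian_endpoint_coupling}, \(\probmeas^{ij}\) is the mixture of Brownian bridges \(\refmeas^{i}(\cdot\mid x_0,x_1)\) over the jointly Gaussian endpoint law. This is the standard static/dynamic decomposition of the Schr\"{o}dinger bridge: the optimizer shares the reference bridges.

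\textbf{Step 1: the marginals.} Conditionally on \((x_0,x_1)\), the Brownian bridge with diffusion \(\eps\) satisfies
\[
x_t=(1-t)x_0+t x_1+\sqrt{\eps}\,\beta_t,\qquad \operatorname{Cov}(\beta_t)=t(1-t)I,
\]
with \(\beta_t\) independent of \((x_0,x_1)\). So \(x_t\) is Gaussian. Its mean is \((1-t)m_i^0+tm_j^1\). Expanding \(\operatorname{Cov}((1-t)x_0+tx_1)\) with the cross-covariance \(\Sigma_{01}^{ij}\) and adding \(\eps t(1-t)I\) gives exactly \(\Sigma_t^{ij}\).

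\textbf{Positive definiteness.} For \(t\in(0,1)\), \(\Sigma_t^{ij}\) is the covariance of \((1-t)x_0+tx_1\) plus \(\eps t(1-t)I\). The first term is positive semidefinite and the second is positive definite, so \(\Sigma_t^{ij}\succ0\). At \(t=0,1\) it reduces to \(\Sigma_i^0\) or \(\Sigma_j^1\), which are positive definite.

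\textbf{Step 2: the drift.} Rather than solving Schr\"{o}dinger systems, I would use the Markov projection. The bridge conditioned on \((x_0,x_1)\) has drift \((x_1-x_t)/(1-t)\). Hence the Markov drift of the mixture is
\[
u_t^{ij}(x)=\E\big[(x_1-x_t)/(1-t)\mid x_t=x\big],
\]
which is valid because the SB is Markov and its drift equals this conditional expectation. Since \((x_t,x_1)\) is jointly Gaussian, the conditional expectation is affine:
\[
u_t^{ij}(x)=\tfrac{1}{1-t}\Big(m_j^1-m_t^{ij}+\big(\operatorname{Cov}(x_1,x_t)-\Sigma_t^{ij}\big)(\Sigma_t^{ij})^{-1}(x-m_t^{ij})\Big).
\]
The constant is \((m_j^1-m_t^{ij})/(1-t)=m_j^1-m_i^0=c_{ij}\).

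\textbf{Main obstacle: matching \(S_t^{ij}\).} One must show that
\[
\operatorname{Cov}(x_1,x_t)-\Sigma_t^{ij}=(1-t)\,S_t^{ij}
\]
in the transpose convention of the statement. The computation uses \(\operatorname{Cov}(x_t,x_1)=(1-t)\Sigma_{01}^{ij}+t\Sigma_j^1\). Subtracting \(\Sigma_t^{ij}\), and being careful with the ordering of \(\Sigma_{01}\) versus \(\Sigma_{01}^\top\), yields \((1-t)\) times the stated expression. This is pure algebra, but care is needed since \(\Sigma_{01}^{ij}\) need not be symmetric.

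An alternative check is the continuity equation, with \(\dot\Sigma_t=A_t\Sigma_t+\Sigma_tA_t^\top+\eps I\). Differentiating \(\Sigma_t^{ij}\) gives
\[
\dot\Sigma_t^{ij}=S_t+S_t^\top+\eps I,
\]
which is consistent with \(A_t\Sigma_t=S_t\) once the symmetric part is confirmed.

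\textbf{Step 3: the cost.} By \eqref{eq:pairwise_energy}, with \(x_t\sim\mathcal N(m_t^{ij},\Sigma_t^{ij})\),
\[
\E\|A_t(x_t-m_t)+c\|^2=\|c\|^2+\operatorname{tr}(A_t\Sigma_tA_t^\top),
\]
because the cross term vanishes. Integrating over \([0,1)\) gives the formula.

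The endpoint \(t=1\) is a null set. Integrability follows because \(\operatorname{tr}(A_t\Sigma_tA_t^\top)=\operatorname{tr}(S_t\Sigma_t^{-1}S_t^\top)\) stays bounded, since \(\Sigma_t\) is continuous and positive definite on the compact interval \([0,1]\).
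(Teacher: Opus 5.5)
Your proposal is correct and follows essentially the same route as the paper's appendix proof: disintegrate \(\probmeas^{ij}\) into Brownian bridges over the Gaussian endpoint law and read off \(m_t^{ij},\Sigma_t^{ij}\). You then obtain the drift as \(\frac{1}{1-t}\bigl(\E[x_1\mid x_t=x]-x\bigr)\) by Gaussian conditioning with \(\operatorname{Cov}(x_1,x_t)=(1-t)(\Sigma_{01}^{ij})^{\top}+t\Sigma_j^1\) (the identity \(\operatorname{Cov}(x_1,x_t)-\Sigma_t^{ij}=(1-t)S_t^{ij}\) does check out), and drop the cross term in the cost, exactly as the paper does; your explicit positive-definiteness argument and the boundedness remark near \(t=1\) usefully fill in details the paper only asserts.
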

\end{myframe}


\subsection{Lifting the mixture problem}

We first define a few objects needed in the sequel. 
A matrix \(\pi \Let (\pi_{ij})_{i\in \indexsource,j\in \indexsink}\) is a \emph{component coupling} for the mixture weights \((\alpha^0,\alpha^1)\) if
\begin{equation}\label{eq:pi_constraints}
\pi_{ij}\ge 0, \quad \sum_{j\in\indexsink}\pi_{ij}=\alpha_i^0 \;\;\text{for all } \, i\in\indexsource \text{  and  } \sum_{i\in\indexsource}\pi_{ij}=\alpha_j^1 \;\;\text{for all }\, j\in\indexsink.
\end{equation}
We denote the set of all such couplings by \(\Pi(\alpha^0,\alpha^1)\). Note that, \(\pi_{ij}\) is the fraction of total mass assigned to start from initial component \(i\) and end at terminal component \(j\).
The constraints \eqref{eq:pi_constraints} ensure that, aggregating over all terminal components, the total mass leaving component \(i\) equals \(\alpha_i^0\), and aggregating over all initial components, the total mass arriving at component \(j\) equals \(\alpha_j^1\).

Define a discrete label space \(\labelspace \Let \indexsource \times \indexsink\). Let \(Z\) be a \(\mathcal Z\)-valued random variable with \(\probmeas\bigl(Z=(i,j)\bigr)=\pi_{ij}.\) 
We lift the state space from trajectories \(\Omega\) to the product space
\begin{align}\label{eq:lifted}
\liftedspace \Let \mathcal Z\times \Omega = \bigl( \aset[]{1,\ldots,N_1} \times \aset[]{1,\ldots,N_2} \bigr) \times \pathsp\bigl(\lcrc{0}{1};\R[d]\bigr),\nn
\end{align}
and equip \(\liftedspace\) with \(\Borelsigalg(\liftedspace)\).
Observe that, a point in \(\liftedspace\) is a pair \((z,\omega)\) with \(z=(i,j)\) and \(\lcrc{0}{1} \ni t \mapsto \omega(t) \in \R[d]\) a continuous trajectory. We now define one of the chief objects: For \(\pi\in\Pi(\alpha^0,\alpha^1)\), define a probability measure on \(\liftedspace\) by
\begin{equation}\label{eq:barP_def}
\liftedmeas \Let \sum_{i\in\indexsource}\sum_{j\in\indexsink}
\pi_{ij}\,\delta_{(i,j)}\otimes \probmeas^{ij} \in \polish(\liftedspace).
\end{equation}
\textcolor{black}{In \eqref{eq:barP_def}, \(\delta_{(i,j)}\) denotes the Dirac measure on the discrete label space \(\mathcal{Z} \Let \indexsource \times \indexsink\) concentrated at the label \((i,j)\). Thus, \(\delta_{(i,j)}\otimes \probmeas^{ij}\) is the product probability measure on \(\bigl(\mathcal{Z}\times\Omega,\Borelsigalg(\mathcal{Z})\otimes\Borelsigalg(\Omega)\bigr)\) satisfying
\begin{align}
A\times B \mapsto (\delta_{(i,j)}\otimes \probmeas^{ij})(A\times B) \Let \delta_{(i,j)}(A)\probmeas^{ij}(B) \quad\text{for all } A \in \Borelsigalg(\mathcal{Z}) \text{ and } B \in \Borelsigalg(\Omega), \nn
\end{align}
where \(\delta_{(i,j)}(A)=1\) if \((i,j)\in A\), and \(\delta_{(i,j)}(A)=0\) otherwise.} Consequently, if \(x_{0:1} \Let \aset[]{t \mapsto x_t \suchthat t \in \lcrc{0}{1}}\), to sample \((Z,x_{0:1})\sim \bar{\probmeas}^{\pi}\) one needs to follow the two-step procedure: (i) draw \(Z=(i,j)\) with probability \(\pi_{ij}\); (ii) conditional on \(Z=(i,j)\), draw an entire trajectory \(x_{0:1}\) from the pairwise bridge law \(\probmeas^{ij}\). 
Our first observation is that the endpoint constraints are satisfied by construction.

\begin{myframe}
\begin{proposition}\label{prop:endpoints_auto}
Let \(\pi\in\Pi(\alpha^0,\alpha^1)\) and let \(\rho_t^\pi(\cdot)\) denote the time-\(t\) marginal density of \(x_t\) under \(\liftedmeas\).
Then,
\begin{equation}\label{eq:marginals_mix}
x \mapsto \rho_t^\pi(x) \Let \sum_{i\in\indexsource}\sum_{j\in\indexsink}\pi_{ij}\,\rho_t^{ij}(x),
\end{equation}
and, in particular, \(\rho_0^\pi=\rho_0\) and \(\rho_1^\pi=\rho_1.\)
\end{proposition}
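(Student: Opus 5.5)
The plan is to compute the time-$t$ marginal of $x_t$ under $\liftedmeas$ directly from the definition \eqref{eq:barP_def}, and then specialize to $t=0$ and $t=1$ using the marginal constraints of the pairwise bridges together with the coupling constraints \eqref{eq:pi_constraints}.

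\textbf{Step 1: the time-$t$ marginal.} Fix $t\in\lcrc{0}{1}$ and a Borel set $B\subseteq\R[d]$. The event $\{x_t\in B\}$ in $\liftedspace$ is $\mathcal Z\times x_t^{-1}(B)$, where $x_t^{-1}(B)\in\Borelsigalg(\Omega)$ because the evaluation map is continuous in the uniform topology. By the product structure of each summand in \eqref{eq:barP_def},
\begin{align}
\liftedmeas\bigl(\mathcal Z\times x_t^{-1}(B)\bigr)
=\sum_{i\in\indexsource}\sum_{j\in\indexsink}\pi_{ij}\,\delta_{(i,j)}(\mathcal Z)\,\probmeas^{ij}\bigl(x_t^{-1}(B)\bigr)
=\sum_{i,j}\pi_{ij}\int_B\rho_t^{ij}(x)\,\dd x. \nn
\end{align}
Here we use that $\rho_t^{ij}$ is the density of $\Law_{\probmeas^{ij}}(x_t)$, which is the nondegenerate Gaussian of Proposition~\ref{prop:pairwise_gaussian_bridge_formulas}\ref{prop:GSB:1}; note $\Sigma_t^{ij}\in\mathbb S^d_{++}$. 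Since the sum is finite, it can be exchanged with the integral, so $\Law_{\liftedmeas}(x_t)$ has density $\sum_{i,j}\pi_{ij}\rho_t^{ij}$. This is \eqref{eq:marginals_mix}, and the density is unique Lebesgue-a.e. (in fact it is continuous).

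\textbf{Step 2: the endpoints.} At $t=0$, the constraint in \eqref{eq:pairwise_SB_def} gives $\rho_0^{ij}=p_i^0$; this is also consistent with $m_0^{ij}=m_i^0$ and $\Sigma_0^{ij}=\Sigma_i^0$ in Proposition~\ref{prop:pairwise_gaussian_bridge_formulas}. Therefore
\begin{align}
\rho_0^\pi=\sum_i\Bigl(\sum_j\pi_{ij}\Bigr)p_i^0=\sum_i\alpha_i^0p_i^0=\rho_0 \nn
\end{align}
by the row-sum constraint in \eqref{eq:pi_constraints}. Similarly, $\rho_1^{ij}=p_j^1$, and the column-sum constraint gives $\rho_1^\pi=\sum_j\alpha_j^1p_j^1=\rho_1$.

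No step here is a real obstacle. The only point needing care is the measure-theoretic bookkeeping in Step 1: the marginal is obtained by pushing forward under the composite map $(z,\omega)\mapsto\omega(t)$, and this map must be measurable, which holds because it is continuous. Beyond that, the argument relies only on the well-posedness of $\probmeas^{ij}$, which the paper has already asserted.
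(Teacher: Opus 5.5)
Your proposal is correct and follows essentially the same route as the paper: it identifies the law of $x_t$ under $\liftedmeas$ as the $\pi$-weighted mixture of the pairwise bridge marginals, then uses $\rho_0^{ij}=p_i^0$, $\rho_1^{ij}=p_j^1$ together with the row- and column-sum constraints. Your Step 1 just makes explicit the bookkeeping (evaluating $\liftedmeas$ on $\mathcal Z\times x_t^{-1}(B)$) that the paper compresses into ``conditioning on $Z=(i,j)$ yields $\probmeas^{ij}$.''
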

\end{myframe}

\begin{proof}
By Definition~\ref{eq:barP_def}, conditioning on \(Z=(i,j)\) yields the pairwise law \(\probmeas^{ij}\). 
Therefore, the law of \(x_t\) under \(\bar{\probmeas}^{\pi}\) is the mixture of the laws under \(\probmeas^{ij}\) with weights \(\pi_{ij}\), giving \ref{eq:marginals_mix}.
At \(t=0\), \(\rho_0^{ij}=p_i^0\), hence
\begin{align}
\rho_0^\pi=\sum_{(i,j) \in \indexsource \times \indexsink}\pi_{ij}p_i^0=\sum_{i\in \indexsource}\Big(\sum_{j\in \indexsink}\pi_{ij}\Big)p_i^0=\sum_{i \in \indexsource}\alpha_i^0 p_i^0=\rho_0. \nn
\end{align}
Similarly, at \(t=1\), \(\rho_1^{ij}=p_j^1\), hence,
\begin{align}
\rho_1^\pi=\sum_{(i,j) \in \indexsource \times \indexsink}\pi_{ij}p_j^1=\sum_{j \in \indexsink}\Big(\sum_{i\in \indexsource}\pi_{ij}\Big)p_j^1=\sum_{j\in \indexsink}\alpha_j^1 p_j^1=\rho_1. \nn    
\end{align}
Thus, the lifted law \(\bar{\probmeas}^{\pi}\) satisfies the endpoints by construction. 
\end{proof}
We now endow the lifted construction with an entropic optimality criterion, which selects the component coupling \(\pi\) by comparing \(\liftedmeas\) against a labeled reference law on \(\liftedspace\).


\subsubsection{Augmented entropic formulation}

Let \(\pi,\eta \in \Pi(\alpha^0,\alpha^1)\) with \(\eta_{ij}>0\). Define the standard KL divergence \(\KL(\pi\|\eta)\Let\sum_{i,j}\pi_{ij}\log\!\left(\frac{\pi_{ij}}{\eta_{ij}}\right).\)
We now construct an augmented reference law in the lifted space \(\liftedspace\). Fix a strictly positive \emph{prior coupling} \(\eta\in\Pi(\alpha^0,\alpha^1)\) with \(\eta_{ij}>0\).
Define the augmented (uncontrolled) measure
\begin{equation}\label{eq:barR_def}
\augmentmeas \Let \sum_{i\in\indexsource}\sum_{j\in\indexsink}
\eta_{ij}\,\delta_{(i,j)}\otimes \refmeas^i \in \polish(\liftedspace).
\end{equation}
The coupling \(\eta\) plays the role of a prior component assignment before accounting for the pairwise bridge costs. We now come to one of our main results.

Define the admissible lifted class
\begin{align}\label{eq:feas:set}
\admclass \Let \left\{ \bar{\optmeas} \Let \sum_{i,j}\pi_{ij}\delta_{(i,j)}\otimes \optmeas^{ij} \in \polish(\liftedspace)  \;\middle\vert\;  
\begin{array}{@{}l@{}}
\pi\in\Pi(\alpha^0,\alpha^1), \,\optmeas^{ij} \in \polish(\Omega),\\
\mathrm{Law}_{\optmeas^{ij}}(x_0)=p_i^0,\;
\mathrm{Law}_{\optmeas^{ij}}(x_1)=p_j^1.
\end{array}
\right\}. 
\end{align}

\begin{myframe}
\begin{theorem}[An exact lifted reduction]\label{thm:finite_dim_reduction}
Let \(\eta\in\Pi(\alpha^0,\alpha^1)\) satisfy \(\eta_{ij}>0\) for all
\((i,j)\in\indexsource\times\indexsink\), and define \(\bar{\refmeas}^{\eta}\)
by \eqref{eq:barR_def}. 
Consider the lifted problem
\begin{align}
    \inf_{\bar{\optmeas}\in \admclass}
    \KL\bigl(\bar{\optmeas}\|\bar{\refmeas}^{\eta}\bigr).
\end{align}
Then, we have the following finite-dimensional form
\begin{align}\label{eq:lifted:KL}
    \inf_{\bar{\optmeas}\in \admclass}
    \KL\bigl(\bar{\optmeas}\|\bar{\refmeas}^{\eta}\bigr)
    = \inf_{\pi\in\Pi(\alpha^0,\alpha^1)}
    \aset[\Big]{  \sum_{i,j}\pi_{ij}\kappa_{ij} +    \KL(\pi\|\eta)      }.
\end{align}
Equivalently, in the kinetic-energy scaling representation
\begin{align}\label{eq:kinetic:form}
    \inf_{\bar{\optmeas}\in \admclass}
    \eps\KL\bigl(\bar{\optmeas}\|\bar{\refmeas}^{\eta}\bigr)
    = \inf_{\pi\in\Pi(\alpha^0,\alpha^1)}
    \aset[\Big]{\sum_{i,j}\pi_{ij}C_{ij}+\eps\KL(\pi\|\eta)}.
\end{align}
Moreover, for every fixed \(\pi\in\Pi(\alpha^0,\alpha^1)\), the optimal
conditional laws on every active pair \((i,j)\) with \(\pi_{ij}>0\) are the pairwise bridges \(\optmeas^{ij}=\probmeas^{ij}\). Hence, for any optimal coupling \(\pi^\star\), an optimal lifted law is given by
\begin{align}
    \bar{\probmeas}^{\pi^\star} =
    \sum_{i,j}\pi^\star_{ij}\delta_{(i,j)}\otimes \probmeas^{ij}. \nn
\end{align}
If, in addition, \(\pi^\star_{ij}>0\) for all \((i,j)\), then the optimal
conditional laws are unique for all component pairs.
\end{theorem}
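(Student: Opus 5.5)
The plan is to compute $\KL(\bar{\optmeas}\|\bar{\refmeas}^{\eta})$ explicitly for an arbitrary element of $\admclass$ via the chain rule for relative entropy on the product space $\liftedspace=\mathcal Z\times\Omega$, with the discrete label $Z$ as the conditioning variable. Fix $\bar{\optmeas}=\sum_{i,j}\pi_{ij}\delta_{(i,j)}\otimes\optmeas^{ij}\in\admclass$. Since $\mathcal Z$ is finite and $\eta_{ij}>0$, the $\mathcal Z$-marginal of $\bar{\refmeas}^{\eta}$ charges every label.

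\textbf{Step 1: absolute continuity and density.} If some $\optmeas^{ij}$ with $\pi_{ij}>0$ is not absolutely continuous with respect to $\refmeas^i$, then $\bar{\optmeas}\not\ll\bar{\refmeas}^{\eta}$. Otherwise
\begin{align}
\frac{\dd\bar{\optmeas}}{\dd\bar{\refmeas}^{\eta}}(i,j,\omega)=\frac{\pi_{ij}}{\eta_{ij}}\,\frac{\dd\optmeas^{ij}}{\dd\refmeas^{i}}(\omega)\quad\text{on }\{\pi_{ij}>0\}. \nn
\end{align}
To verify this, I will test against rectangles $\{(i,j)\}\times B$, which generate $\Borelsigalg(\liftedspace)$ because $\mathcal Z$ is discrete.

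\textbf{Step 2: chain rule.} Integrating the logarithm of this density gives the exact decomposition
\begin{align}
\KL(\bar{\optmeas}\|\bar{\refmeas}^{\eta})=\KL(\pi\|\eta)+\sum_{i,j}\pi_{ij}\,\KL(\optmeas^{ij}\|\refmeas^i), \nn
\end{align}
with the convention $0\cdot\infty=0$. Both sides equal $+\infty$ exactly when some active conditional law has infinite relative entropy. I expect the main (mild) obstacle to be justifying the splitting of the logarithm when individual terms might fail to be integrable. To handle it, I will write $\log(\pi_{ij}/\eta_{ij})$, which is a bounded constant on each active label, plus $\log\frac{\dd\optmeas^{ij}}{\dd\refmeas^i}$, whose negative part is integrable by the standard bound $\int(\log f)^-f\,\dd R\le e^{-1}$. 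Hence the sum is well defined in $(-\infty,+\infty]$.

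\textbf{Step 3: inner minimization.} With $\pi$ fixed, the conditional laws $\optmeas^{ij}$ vary independently subject only to the endpoint constraints $\Law(x_0)=p_i^0$ and $\Law(x_1)=p_j^1$. Therefore the infimum over them of $\sum\pi_{ij}\KL(\optmeas^{ij}\|\refmeas^i)$ is $\sum\pi_{ij}\kappa_{ij}$, by the definition \eqref{eq:pairwise_SB_def} of the pairwise bridges. The value $\kappa_{ij}$ is finite, for instance by Proposition~\ref{prop:pairwise_gaussian_bridge_formulas} via \eqref{eq:kl_energy}. The infimum is attained exactly at $\optmeas^{ij}=\probmeas^{ij}$ on active pairs, and uniqueness there follows from strict convexity of $\KL(\cdot\|\refmeas^i)$ on the convex constraint set. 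On inactive pairs the choice of $\optmeas^{ij}$ is irrelevant, so I set $\optmeas^{ij}=\probmeas^{ij}$ for definiteness.

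\textbf{Step 4: outer minimization and conclusion.} Taking the infimum over $\pi\in\Pi(\alpha^0,\alpha^1)$ yields \eqref{eq:lifted:KL}, and multiplying by $\eps$ with $C_{ij}=\eps\kappa_{ij}$ gives \eqref{eq:kinetic:form}. The outer objective is continuous and strictly convex on the compact polytope, so a minimizer $\pi^\star$ exists (in fact a unique one). Plugging $\pi^\star$ into Step 3 shows that $\bar{\probmeas}^{\pi^\star}$ attains the lifted infimum. If $\pi^\star_{ij}>0$ for all pairs, every pair is active, and Step 3 gives uniqueness of all conditional laws.
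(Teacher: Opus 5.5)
Your proposal is correct and follows essentially the same route as the paper. Both arguments use the label-sliced chain rule $\KL(\bar{\optmeas}\|\bar{\refmeas}^{\eta})=\KL(\pi\|\eta)+\sum_{i,j}\pi_{ij}\KL(\optmeas^{ij}\|\refmeas^i)$, the pairwise bound $\KL(\optmeas^{ij}\|\refmeas^i)\ge\kappa_{ij}$ with equality at $\probmeas^{ij}$, and strict convexity of relative entropy for uniqueness on active pairs. Your explicit Radon--Nikodym computation with the negative-part integrability check, and your compactness/strict-convexity argument for the existence of $\pi^\star$, only make explicit two points the paper leaves implicit: the paper cites the chain rule from Dupuis--Ellis, and it presupposes that an optimal coupling exists.
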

\end{myframe}

\begin{proof}
Let
\[
\bar{\optmeas}=\sum_{i,j}\pi_{ij}\delta_{(i,j)}\otimes \optmeas^{ij}\in\admclass
\]
be arbitrary. 
We first prove the inequality ``\(\geq\)'' in \eqref{eq:lifted:KL}.
If \(\KL\bigl(\bar{\optmeas}\|\bar{\refmeas}^{\eta}\bigr)=+\infty,\)
then the desired lower bound is immediate. 
Hence, assume that \(\KL\bigl(\bar{\optmeas}\|\bar{\refmeas}^{\eta}\bigr)<+\infty.\)
In particular, \(\bar{\optmeas}\ll\bar{\refmeas}^{\eta}\). 
Since \(\bar{\refmeas}^{\eta}=\sum_{i,j}\eta_{ij}\delta_{(i,j)}\otimes \mathsf R^i\) and \(\eta_{ij}>0\) for every \((i,j)\), it follows that, for every active pair \((i,j)\) with \(\pi_{ij}>0\), one has
\(\optmeas^{ij}\ll \mathsf R^i.\) Indeed, if \(B\in\borel(\Omega)\) satisfies \(\mathsf R^i(B)=0\), then \(\bar{\refmeas}^{\eta}\bigl(\{(i,j)\}\times B\bigr)=\eta_{ij}\mathsf R^i(B)=0.\) 
Since \(\bar{\optmeas}\ll\bar{\refmeas}^{\eta}\), this implies \(0=
\bar{\optmeas}\bigl(\{(i,j)\}\times B\bigr)=\pi_{ij}\optmeas^{ij}(B).\) Thus, if \(\pi_{ij}>0\), then \(\optmeas^{ij}(B)=0\), proving \(\optmeas^{ij}\ll \mathsf R^i\).

Consequently, the standard chain rule for relative 
entropy~\cite[Theorem B.2.1]{ref:Dupuis:LargeDev}  applies on the disjoint label-slice decomposition of \(\bar\Omega=\mathcal Z\times\Omega\), and gives
\begin{align}\label{KL:bound}
  \KL\bigl(\bar{\optmeas}\|\bar{\refmeas}^{\eta}\bigr)=
    \KL(\pi\|\eta)+\sum_{i,j}\pi_{ij}\KL(\optmeas^{ij}\|\mathsf{R}^i).   
\end{align}
For each pair \((i,j)\), admissibility gives \(\Law_{\optmeas^{ij}}(x_0)=p_i^0\) and \(\Law_{\optmeas^{ij}}(x_1)=p_j^1.\) Hence, by the definition of the pairwise Schr\"{o}dinger bridge \(\probmeas^{ij}\), \(\KL(\optmeas^{ij}\|\mathsf R^i)\ge
\KL(\probmeas^{ij}\|\mathsf R^i)=\kappa_{ij}.\) 
Therefore, using \eqref{KL:bound}, we have
\(\KL\bigl(\bar{\optmeas}\|\bar{\refmeas}^{\eta}\bigr)\ge\KL(\pi\|\eta)+\sum_{i,j}\pi_{ij}\kappa_{ij}.\)
Since this holds for every \(\bar{\optmeas}\in\admclass\), we obtain
\begin{align}\label{eq:ineq:side:1}
    \inf_{\bar{\optmeas}\in\admclass}
    \KL\bigl(\bar{\optmeas}\|\bar{\refmeas}^{\eta}\bigr)
    \ge
    \inf_{\pi\in\Pi(\alpha^0,\alpha^1)}
    \aset[\Big]{\KL(\pi\|\eta)+\sum_{i,j}\pi_{ij}\kappa_{ij}}.
\end{align}

We now prove the reverse inequality. 
Let \(\pi\in\Pi(\alpha^0,\alpha^1)\) be arbitrary, and recall the definition of \(\liftedmeas\) from \eqref{eq:barP_def}. Since each \(\probmeas^{ij}\) satisfies \(\Law_{\probmeas^{ij}}(x_0)=p_i^0\) and \(\Law_{\probmeas^{ij}}(x_1)=p_j^1,\) we have \(\liftedmeas\in\admclass\).
Applying the preceding KL decomposition \eqref{KL:bound} with \(\optmeas^{ij}=\probmeas^{ij}\) gives \(\KL\bigl(\bar{\probmeas}^{\pi}\|\bar{\refmeas}^{\eta}\bigr)=
 \KL(\pi\|\eta)+\sum_{i,j}\pi_{ij}\kappa_{ij}.\) 
 Taking the infimum over \(\pi\in\Pi(\alpha^0,\alpha^1)\), we obtain
\begin{align}\label{eq:ineq:side:2}
    \inf_{\bar{\optmeas}\in\admclass}
    \KL\bigl(\bar{\optmeas}\|\bar{\refmeas}^{\eta}\bigr)
    \le
    \inf_{\pi\in\Pi(\alpha^0,\alpha^1)}
    \aset[\Big]{\KL(\pi\|\eta)+\sum_{i,j}\pi_{ij}\kappa_{ij}}.
\end{align}
Combining inequalities \eqref{eq:ineq:side:1} and \eqref{eq:ineq:side:2} yields
\[
    \inf_{\bar{\optmeas}\in\admclass}
    \KL\bigl(\bar{\optmeas}\|\bar{\refmeas}^{\eta}\bigr)
    =
    \inf_{\pi\in\Pi(\alpha^0,\alpha^1)}
    \aset[\Big]{\KL(\pi\|\eta)+\sum_{i,j}\pi_{ij}\kappa_{ij}}.
\]
Multiplying both sides by \(\eps\), and using \(C_{ij}=\eps\kappa_{ij}\), yields \eqref{eq:kinetic:form}.

Finally, fix \(\pi\in\Pi(\alpha^0,\alpha^1)\).
The term \(\KL(\pi\|\eta)\) is fixed, and the only part depending on the conditional path laws is \(\sum_{i,j}\pi_{ij}\KL(\optmeas^{ij}\|\mathsf R^i).\) 
For every active pair \((i,j)\) with \(\pi_{ij}>0\), the conditional path law \(\optmeas^{ij}\) must minimize the pairwise problem
\[
    \inf \aset[\big]{\KL(\optmeas\|\mathsf R^i) \suchthat \optmeas \in \polish(\Omega),\;
    \Law_{\optmeas}(x_0)=p_i^0,\;
    \Law_{\optmeas}(x_1)=p_j^1},
\]
where \(\optmeas\) is a generic path-space optimization variable\footnote{By the definition of the pairwise component Schr\"{o}dinger bridge in \eqref{eq:pairwise_SB_def}, the minimizer of this problem is precisely \(\probmeas^{ij}\).
Therefore, on every active pair, the optimal conditional law is \(\optmeas^{ij}=\probmeas^{ij}\).}

This pairwise minimization problem has a unique solution. Indeed, its admissible set is convex, and \(\optmeas\mapsto \KL(\optmeas\|\mathsf{R}^i)\) is strictly convex on the set of measures absolutely continuous with respect to \(\mathsf{R}^i\).
Since the Gaussian bridge \(\probmeas^{ij}\) has finite entropy, every minimizer has finite entropy and hence is absolutely continuous with respect to \(\mathsf{R}^i\). 
Therefore, two distinct minimizers cannot exist. The unique minimizer is precisely the pairwise Schrödinger bridge \(\probmeas^{ij}\). Hence, for any optimal coupling \(\pi^\star\), the optimal lifted law is
\[
    \bar{\probmeas}^{\pi^\star}= \sum_{i,j}\pi^\star_{ij}\delta_{(i,j)}\otimes \probmeas^{ij}.
\]
If, in addition, \(\pi^\star_{ij}>0\) for all \((i,j)\), then every conditional law is active, and therefore the optimal conditional laws are unique for all component pairs. The proof is complete.
\end{proof}

Consequently, minimizing the lifted KL cost over \(\pi\in\Pi(\alpha^0,\alpha^1)\) is equivalently written, in the kinetic-energy scaling of Theorem \ref{thm:finite_dim_reduction}, as the finite-dimensional strictly convex program
\begin{equation}\label{eq:pi_opt}
\pi^\star \in \argmin_{\pi\in\Pi(\alpha^0,\alpha^1)} \left\{
\sum_{i,j}\pi_{ij}\,C_{ij} + \eps\,\KL(\pi\|\eta) \right\} = \argmin_{\pi\in\Pi(\alpha^0,\alpha^1)} \left\{
\sum_{i,j}\pi_{ij}\,\kappa_{ij} + \KL(\pi\|\eta) \right\}.
\end{equation}
Theorem \ref{thm:finite_dim_reduction} reduces the lifted entropy minimization problem to a finite-dimensional entropic transport problem over \(\Pi(\alpha^0,\alpha^1)\); we now record its optimality conditions and the corresponding Sinkhorn scaling form.


\subsubsection{Optimality conditions and Sinkhorn form}
We can solve for \(\pi\) in \eqref{eq:pi_opt} using Sinkhorn-type scaling. 
Recall that \(\eta\in\Pi(\alpha^0,\alpha^1)\) is the fixed strictly positive prior coupling used to define the lifted reference law \(\augmentmeas \) in \eqref{eq:barR_def}.
Define the Gibbs kernel \cite{ref:GP:MC:OT:book}
\begin{equation}\label{eq:K_matrix}
K_{ij}\Let \eta_{ij}\exp\!\left(-\frac{C_{ij}}{\eps}\right).
\end{equation}
Since \(C_{ij}=\eps\kappa_{ij}\), one has \(K_{ij}=\eta_{ij}\exp(-\kappa_{ij})\). We have the following scaling form.

\begin{myframe}
\begin{proposition}\label{prop:sinkhorn_form}
The unique optimizer \(\pi^\star\) of \eqref{eq:pi_opt} has the form
\begin{equation}\label{eq:sinkhorn_form}
\pi_{ij}^\star = a_i\,K_{ij}\,b_j,
\end{equation}
for some positive vectors \(a\in\R[N_1]\), \(b\in\R[N_2]\) chosen so that \(\sum_j\pi_{ij}^\star=\alpha_i^0,\) and \(\sum_i\pi_{ij}^\star=\alpha_j^1.\)
\end{proposition}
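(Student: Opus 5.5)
The plan is to treat \eqref{eq:pi_opt} as a strictly convex program over the compact polytope \(\Pi(\alpha^0,\alpha^1)\) and read off its Lagrangian stationarity conditions. Write the objective as
\[
F(\pi)=\sum_{i,j}\pi_{ij}\kappa_{ij}+\sum_{i,j}\pi_{ij}\log\frac{\pi_{ij}}{\eta_{ij}}=\sum_{i,j}\pi_{ij}\log\frac{\pi_{ij}}{K_{ij}},
\]
using \(K_{ij}=\eta_{ij}e^{-\kappa_{ij}}\). So \(F(\pi)\) is the unnormalized KL divergence of \(\pi\) from the positive matrix \(K\), and \eqref{eq:pi_opt} is an entropic projection of \(K\) onto the set of couplings.

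First, existence and uniqueness. The set \(\Pi(\alpha^0,\alpha^1)\) is nonempty (it contains \(\eta\), and also \(\alpha^0(\alpha^1)^\top\)) and compact. The function \(F\) is continuous on the closed orthant, with the convention \(0\log 0=0\), and it is strictly convex because \(t\mapsto t\log t\) is strictly convex. Hence a minimizer exists and it is unique.

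Second, positivity of the minimizer, which I expect to be the main obstacle. Stationarity can only be written at an interior point, so I must first show \(\pi^\star_{ij}>0\) for all \((i,j)\). I would argue by perturbation. Suppose \(\pi^\star_{ij}=0\) for some pair, and set \(\pi^\varepsilon=(1-\varepsilon)\pi^\star+\varepsilon\eta\), which lies in \(\Pi(\alpha^0,\alpha^1)\) and has strictly positive entries. The one-sided derivative of \(F(\pi^\varepsilon)\) at \(\varepsilon=0^+\) contains, for each zero entry, a term \(\eta_{ij}\log(\varepsilon\eta_{ij}/K_{ij})\to-\infty\). Every other term has a finite derivative, since \(\pi^\star_{kl}>0\) there. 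So \(F(\pi^\varepsilon)<F(\pi^\star)\) for small \(\varepsilon\), contradicting optimality. This is the step that uses \(\eta_{ij}>0\), and hence \(K_{ij}>0\), in an essential way.

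Third, the Lagrangian computation. With \(\pi^\star\) in the relative interior of the positive orthant, the only active constraints are the affine marginal constraints, so Lagrange multipliers \(f_i,g_j\) exist with
\[
\log\frac{\pi^\star_{ij}}{K_{ij}}+1=f_i+g_j .
\]
This gives \(\pi^\star_{ij}=a_iK_{ij}b_j\) with \(a_i=e^{f_i-1/2}\) and \(b_j=e^{g_j-1/2}\), both positive. Substituting into the marginal constraints gives the stated conditions on \(a\) and \(b\). Alternatively, the existence of such scalings follows directly from Sinkhorn's theorem for strictly positive matrices, and uniqueness of \(\pi^\star\) then identifies the scaled matrix with the minimizer. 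The scaling form can be checked against optimality by noting that any \(\pi=aKb^\top\) with the correct marginals satisfies \(F(\pi')-F(\pi)=\KL(\pi'\|\pi)\ge0\) for every \(\pi'\in\Pi(\alpha^0,\alpha^1)\); this holds because \(\sum_{ij}(\pi'_{ij}-\pi_{ij})(\log a_i+\log b_j)=0\) by the matching marginals. This Pythagorean identity gives both directions at once, so I would likely present it as the main argument.
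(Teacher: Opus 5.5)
Your main line is the paper's own argument. You form the Lagrangian of \eqref{eq:pi_opt}, use strict positivity of the minimizer so that stationarity holds with equality, and exponentiate to get \(\pi^\star_{ij}=a_iK_{ij}b_j\); your \(f_i,g_j\) are the paper's \(\lambda_i/\eps,\mu_j/\eps\) in the \(C\)-scaling.

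The difference is in rigor, not strategy. The paper only asserts that the minimizer is strictly positive ``since \(\eta_{ij}>0\)'' and takes existence and uniqueness for granted. You prove existence by compactness and uniqueness by strict convexity. You prove positivity via the perturbation \(\pi^\varepsilon=(1-\varepsilon)\pi^\star+\varepsilon\eta\), which uses the infinite slope of \(t\log t\) at \(0\). This is exactly the justification the paper's one-line claim needs. Phrase it as \(\tfrac{\mathrm{d}}{\mathrm{d}\varepsilon}F(\pi^\varepsilon)\to-\infty\) as \(\varepsilon\downarrow0\), rather than as a derivative ``at \(0^+\)''. Convexity of \(\varepsilon\mapsto F(\pi^\varepsilon)\) then gives \(F(\pi^\varepsilon)-F(\pi^\star)\le\varepsilon\,\tfrac{\mathrm{d}}{\mathrm{d}\varepsilon}F(\pi^\varepsilon)<0\) for small \(\varepsilon\).

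Your alternative is a genuinely different route: Sinkhorn's scaling theorem plus the identity \(F(\pi')-F(\pi)=\KL(\pi'\|\pi)\) for \(\pi=\operatorname{diag}(a)\,K\operatorname{diag}(b)\in\Pi(\alpha^0,\alpha^1)\). It needs neither the positivity lemma nor Lagrange multipliers, because the scaled matrix is automatically positive. It is also quantitative: the excess objective of every feasible \(\pi'\) is exactly \(\KL(\pi'\|\pi^\star)\), which is the classical I-projection characterization. The cost is that existence of the scaling is imported from Sinkhorn's theorem. The identity alone gives only sufficiency and uniqueness, so it delivers ``both directions at once'' only together with that theorem. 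Alternatively, pair it with your KKT step, which produces the scaling from the existence of a minimizer.
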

\end{myframe}

\begin{proof}
Consider the Lagrangian for Problem~\eqref{eq:pi_opt}
\begin{align}
 \mathcal L(\pi,\lambda,\mu)
&=\sum_{i,j}\pi_{ij}C_{ij}+\eps\sum_{i,j}\pi_{ij}\log\!\Big(\frac{\pi_{ij}}{\eta_{ij}}\Big)
+\sum_i\lambda_i\Big(\alpha_i^0-\sum_j\pi_{ij}\Big)
+\sum_j\mu_j\Big(\alpha_j^1-\sum_i\pi_{ij}\Big). \nn
\end{align}
Since \(\eta_{ij}>0\) and the retained mixture weights are strictly positive, the minimizer is strictly positive. 
Hence, the first-order stationarity condition gives
\begin{align}
0=\frac{\partial \mathcal L}{\partial \pi_{ij}}=C_{ij}+\eps\Big(\log(\pi_{ij}/\eta_{ij})+1\Big)-\lambda_i-\mu_j. \nn
\end{align}
Rearranging, we get \(\log(\pi_{ij}/\eta_{ij})=(\lambda_i+\mu_j-C_{ij})/{\eps}-1.\) 
Exponentiating and absorbing constants into \(a_i\) and \(b_j\), we obtain
\begin{align}
\pi_{ij}
=\eta_{ij}\exp\!\left(-\frac{C_{ij}}{\eps}\right)
\exp\!\left(\frac{\lambda_i}{\eps}-\frac{1}{2}\right)
\exp\!\left(\frac{\mu_j}{\eps}-\frac{1}{2}\right)
=a_iK_{ij}b_j. \nn
\end{align}
This yields \eqref{eq:sinkhorn_form}.
The marginal constraints determine the scaling vectors \(a,b\), up to the usual reciprocal scaling ambiguity \(a\mapsto ca\), \(b\mapsto c^{-1}b\). 
\end{proof}

Note that the scaling vectors \((a,b)\) can be computed by alternating normalization:
\begin{align}
a_i \leftarrow \frac{\alpha_i^0}{\sum_j K_{ij}b_j},\qquad
b_j \leftarrow \frac{\alpha_j^1}{\sum_i K_{ij}a_i}, \nn
\end{align}
iterated to convergence. See \cite[Chapter 3]{ref:GP:MC:OT:book} for more details.

The preceding results solve the lifted labeled problem, but the original SB/density-control problem is posed on the unlabeled path space; we therefore next project the lifted law by discarding/forgetting the component label and quantify the resulting entropy discrepancy.


\subsection{Quantifying the gap}\label{subsec:gap:quantification}

Theorem \ref{thm:finite_dim_reduction} is exact for the lifted problem. 
However, the original problem is unlabeled and is posed on the path space \(\Omega\), not on \(\liftedspace\). To this end, let us define
\begin{align}
\proj_{\Omega}: \mathcal{Z} \times \Omega \to \Omega \quad  \text{with }\mathcal{Z} \times \Omega \ni (z,\omega) \mapsto \proj_{\Omega}(z,\omega) \Let \omega \in \Omega, \nn   
\end{align}
which is the map that projects label-trajectory pairs onto the original path space. 
Define the path-space marginal of the lifted law associated with \(\pi\) by
\begin{align}\label{eq:proj:path:law}
\optmeas^\pi \Let (\proj_{\Omega})_{\#}\liftedmeas \in \polish(\Omega) \quad \text{where }\, (\proj_{\Omega})_{\#}:\polish(\liftedspace) \lra \polish(\Omega). 
\end{align}
Since \(\liftedmeas = \sum_{i,j}\pi_{ij}\delta_{(i,j)}\otimes \probmeas^{ij}\), we have \(\optmeas^\pi = \sum_{i,j}\pi_{ij} \probmeas^{ij}.\) 
Indeed, for any \(B\in\Borelsigalg(\Omega)\), the definition \(\optmeas^\pi=(\proj_\Omega)_{\#} \liftedmeas\) in \eqref{eq:proj:path:law} gives
\begin{align*}
\optmeas^\pi(B)=\liftedmeas(Z\times B)=\sum_{i,j}\pi_{ij}\bigl(\delta_{(i,j)}\otimes \probmeas^{ij}\bigr)(Z\times B)=\sum_{i,j}\pi_{ij}\probmeas^{ij}(B).
\end{align*}
Similarly, for any \(B\in\Borelsigalg(\Omega)\), the path-space marginal of the augmented reference law is
\begin{align}\label{proj:ref:law}
(\proj_\Omega)_{\#}\augmentmeas(B)=\augmentmeas(Z\times B)=\sum_{i,j}\eta_{ij}\mathsf{R}^i(B)=\sum_i\alpha_i^0 \mathsf{R}^i(B) \teL \mathsf{R}^{\rho_0} \in \polish(\Omega),
\end{align}
where we used \(\sum_j\eta_{ij}=\alpha_i^0\).

Thus, projecting \(\augmentmeas\) onto the original path space removes the auxiliary label and recovers the uncontrolled Brownian reference law initialized from the mixture \(\rho_0\).
Similarly, the projected law \(\optmeas^\pi\Let (\proj_\Omega)_\#\liftedmeas\) has endpoint marginals \(\rho_0\) and \(\rho_1\) by Proposition~\ref{prop:endpoints_auto}. 
Hence, \(\optmeas^\pi\) is a feasible path-space law for the original unlabeled endpoint problem. However, feasibility after projection does not imply equality of the corresponding relative entropies. 
%
%
The lifted law retains the auxiliary label \(Z=(i,j)\), whereas \(\optmeas^\pi\) is obtained after forgetting this label. 
Consequently, in general
\begin{align}
\KL(\liftedmeas \| \augmentmeas) \neq \KL(\optmeas^\pi \| \mathsf{R}^{\rho_0}). \nn
\end{align}
The discrepancy between these two quantities is precisely the conditional label-information term quantified below.

\begin{myframe}
\begin{theorem}[Projection-gap identity for the lifted bridge]
\label{thm:projection-gap-identity}
Let \(\pi,\eta\in\Pi(\alpha^0,\alpha^1)\) and \(\supp \pi \subseteq \supp \eta\).
Recall the definitions \(\liftedmeas
\Let\sum_{i,j}\pi_{ij}\delta_{(i,j)}\otimes \probmeas^{ij}\) and \(\augmentmeas\Let
\sum_{i,j}\eta_{ij}\delta_{(i,j)}\otimes \mathsf{R}^i.\) 
Let \(\optmeas^\pi \Let (\proj_\Omega)_\#\liftedmeas\) and \(\mathsf{R}^{\rho_0}\Let (\proj_\Omega)_\#\augmentmeas\) be defined as in \eqref{eq:proj:path:law} and \ref{proj:ref:law}. 
Define the conditional label-information gap
\begin{align}
\gap(\pi,\eta)\Let\E^{\optmeas^\pi}\left[\KL\bigl(\liftedmeas(Z\mid x_{0:1})
    \|\augmentmeas(Z\mid x_{0:1})\bigr)\right].    
\end{align} 
Then, \(\gap(\pi,\eta)\ge 0\), and \(\KL(\liftedmeas\|\augmentmeas)=\KL(\optmeas^\pi\|\mathsf R^{\rho_0})+\gap(\pi,\eta).\)
Consequently,
\begin{align}\label{eq:gap:main:q}
 \KL(\optmeas^\pi\|\mathsf R^{\rho_0}) =\sum_{i,j}\pi_{ij}\kappa_{ij} +\KL(\pi\|\eta)- \gap(\pi,\eta),
\end{align}
which, since \(C_{ij}=\eps\kappa_{ij}\), is equivalent to
\begin{align}
\eps\KL(\optmeas^\pi\|\mathsf R^{\rho_0}) = \sum_{i,j}\pi_{ij}C_{ij}+\eps\KL(\pi\|\eta) -\eps\gap(\pi,\eta).    
\end{align}    
\end{theorem}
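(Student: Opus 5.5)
This is the chain rule for relative entropy on $\liftedspace=\mathcal Z\times\Omega$, this time disintegrated along the path coordinate rather than the label coordinate as in the proof of Theorem~\ref{thm:finite_dim_reduction}. We first dispose of the infinite case. If $\KL(\liftedmeas\|\augmentmeas)=+\infty$, we interpret the identity in the extended sense: by the chain rule below, at least one of the two right-hand terms is then infinite. So we assume finiteness where needed. The support condition $\supp\pi\subseteq\supp\eta$ together with $\probmeas^{ij}\ll\mathsf R^i$ (finite entropy, $\kappa_{ij}<\infty$) gives $\liftedmeas\ll\augmentmeas$, exactly as in the absolute-continuity argument of Theorem~\ref{thm:finite_dim_reduction}. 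Since the pushforward preserves absolute continuity, it also gives $\optmeas^\pi\ll\mathsf R^{\rho_0}$.

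\textbf{Step 1: explicit conditional laws.} Because $\mathcal Z$ is finite, we write the disintegrations explicitly. Set $f_{ij}\Let \dd\probmeas^{ij}/\dd\mathsf R^{\rho_0}$ and $g_i\Let\dd\mathsf R^i/\dd\mathsf R^{\rho_0}$. These exist because $\probmeas^{ij}\ll\mathsf R^i\ll\mathsf R^{\rho_0}$, the latter since $\alpha_i^0>0$. Then
\[
\liftedmeas(Z=(i,j)\mid x_{0:1})=\frac{\pi_{ij}f_{ij}}{\sum_{k,l}\pi_{kl}f_{kl}},\qquad
\augmentmeas(Z=(i,j)\mid x_{0:1})=\frac{\eta_{ij}g_i}{\sum_{k,l}\eta_{kl}g_k}=\eta_{ij}g_i .
\]
Here $\sum_{k,l}\pi_{kl}f_{kl}=\dd\optmeas^\pi/\dd\mathsf R^{\rho_0}$ and $\sum_{k,l}\eta_{kl}g_k=\sum_k\alpha_k^0g_k=1$, both $\mathsf R^{\rho_0}$-a.s. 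We verify these formulas directly by testing against sets $\{(i,j)\}\times B$. This makes $\gap(\pi,\eta)$ well defined as a measurable function of the path.

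\textbf{Step 2: factor the density.} We factor the lifted Radon--Nikodym derivative as
\[
\frac{\dd\liftedmeas}{\dd\augmentmeas}(z,\omega)
=\frac{\dd\optmeas^\pi}{\dd\mathsf R^{\rho_0}}(\omega)\cdot
\frac{\liftedmeas(Z=z\mid\omega)}{\augmentmeas(Z=z\mid\omega)} .
\]
To justify this, we check that both sides integrate identically against $\augmentmeas$ on rectangles $\{(i,j)\}\times B$. Plugging in Step 1, the right side reduces to $\pi_{ij}f_{ij}/(\eta_{ij}g_i)$, and integrating that against $\eta_{ij}\mathsf R^i=\eta_{ij}g_i\mathsf R^{\rho_0}$ gives $\pi_{ij}\probmeas^{ij}(B)$, as required.

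\textbf{Step 3: integrate and conclude.} We take logarithms and integrate against $\liftedmeas$, disintegrating as $\liftedmeas=\optmeas^\pi(\dd\omega)\,\liftedmeas(\dd z\mid\omega)$. This yields $\KL(\liftedmeas\|\augmentmeas)=\KL(\optmeas^\pi\|\mathsf R^{\rho_0})+\gap(\pi,\eta)$. Nonnegativity of $\gap$ follows because it averages discrete KL divergences, each nonnegative by Gibbs' inequality. Formula \eqref{eq:gap:main:q} then follows by substituting the label-slice decomposition \eqref{KL:bound} with $\optmeas^{ij}=\probmeas^{ij}$, namely $\KL(\liftedmeas\|\augmentmeas)=\KL(\pi\|\eta)+\sum\pi_{ij}\kappa_{ij}$. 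That decomposition only uses $\pi_{ij}>0\Rightarrow\eta_{ij}>0$, so it still applies here. Finally, we multiply by $\eps$.

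\textbf{Main obstacle.} The delicate point is splitting the integral of a sum of logs when terms may be infinite or only quasi-integrable. Since $\KL(\liftedmeas\|\augmentmeas)<\infty$ and both pieces are nonnegative, we handle it by citing the general chain rule \cite[Theorem B.2.1]{ref:Dupuis:LargeDev}, applied to the disintegration over $\Omega$ instead of over $\mathcal Z$. That result already covers the extended-value case, and Steps 1--2 then serve only to identify its terms concretely.
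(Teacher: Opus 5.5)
Your proposal is correct and follows the paper's route: the relative-entropy chain rule \cite[Theorem B.2.1]{ref:Dupuis:LargeDev} applied along the projection $\proj_\Omega$, then substitution of the label-slice decomposition \eqref{KL:bound} with $\optmeas^{ij}=\probmeas^{ij}$. Your explicit posterior formulas (notably $\augmentmeas(Z=(i,j)\mid\omega)=\eta_{ij}\,\dd\mathsf R^i/\dd\mathsf R^{\rho_0}$) and the density factorization only make concrete the terms the paper leaves implicit. The infinite case you treat cannot occur, since $\kappa_{ij}<\infty$ and $\supp\pi\subseteq\supp\eta$ make $\KL(\liftedmeas\|\augmentmeas)$ finite.
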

\end{myframe}

\begin{proof}
Since each pairwise bridge satisfies \(\probmeas^{ij}\ll\mathsf R^i\) on every active label pair and \(\supp \pi \subseteq \supp \eta\), we have \(\liftedmeas\ll\augmentmeas.\) Applying the standard relative-entropy chain rule \cite[Theorem B.2.1]{ref:Dupuis:LargeDev} under the projection \(\proj_\Omega:\mathcal Z\times\Omega\to\Omega\) gives
\begin{align}
\KL(\liftedmeas\|\augmentmeas) =\KL((\proj_\Omega)_\#\liftedmeas\|(\proj_\Omega)_\#\augmentmeas) + \E^{\optmeas^\pi} \left[
\KL\bigl(\liftedmeas(Z\mid x_{0:1}) \| \augmentmeas(Z\mid x_{0:1})\bigr) \right]. \nn
\end{align}
By definition, \((\proj_\Omega)_\#\liftedmeas=\optmeas^\pi\) and \((\proj_\Omega)_\#\augmentmeas=\mathsf R^{\rho_0},\) and hence
\begin{align}
\KL(\liftedmeas\|\augmentmeas) = \KL(\optmeas^\pi\|\mathsf R^{\rho_0}) + \gap(\pi,\eta). \nn
\end{align}
The nonnegativity of \(\gap(\pi,\eta)\) follows from the nonnegativity of the relative entropy.

Finally, applying the label-sliced KL decomposition from \eqref{KL:bound} in Theorem~\ref{thm:finite_dim_reduction}, with \(\mathsf{Q}^{ij}=\probmeas^{ij}\), we obtain \(\KL(\liftedmeas\|\augmentmeas) = \KL(\pi\|\eta)+\sum_{i,j}\pi_{ij}\kappa_{ij}.\) Substituting this into the projection identity gives
\begin{align}\label{eq:projection_gap_energy}
 \KL(\optmeas^\pi\|\mathsf R^{\rho_0}) = \sum_{i,j}\pi_{ij}\kappa_{ij}+ \KL(\pi\|\eta)  - \gap(\pi,\eta).   
\end{align}
Multiplying by \(\eps\), and using \(C_{ij}=\eps\kappa_{ij}\), gives the kinetic-energy-scaled identity. 
\end{proof}

Thus, the lifted objective equals the unlabeled path-space objective only up to the nonnegative gap term \(\gap(\pi,\eta)\). 
In general, this term does not vanish, and therefore, one cannot identify the lifted labeled problem with the original unlabeled problem without additional structure.

\begin{remark}
Since \(\gap(\pi,\eta)\) is an expected conditional relative entropy, it is nonnegative.
Moreover, \(\gap(\pi,\eta)=0\) if and only if the two posterior distributions of the label \(Z=(i,j)\), conditioned on the full path \(x_{0:1}\) (recall that \(x_{0:1} \Let \aset[]{t \mapsto x_t \suchthat t \in \lcrc{0}{1}}\)), coincide \(\optmeas^\pi\)-almost surely. More precisely, \(\gap(\pi,\eta)=0\) if and only if, for \(\optmeas^{\pi}\)-a.e. path \(\omega\in\Omega\),
\begin{align}
\bar{\probmeas}^{\pi}\!\left(Z=(i,j)\mid x_{0:1}=\omega\right)
= \bar{\refmeas}^{\eta}\!\left(Z=(i,j)\mid x_{0:1}=\omega\right) \nn 
\end{align}
for every \((i,j)\in\indexsource\times\indexsink\). This translates, after choosing Radon--Nikodym versions of the posterior densities, to
\begin{align}
    \frac{\dd(\pi_{ij}\probmeas^{ij})}{\dd \optmeas^\pi}(\omega)
    = \frac{\dd(\eta_{ij}\refmeas^i)}{\dd \refmeas^{\rho_0}}(\omega)
    \quad \text{for all }(i,j)\text{ and for }\optmeas^\pi\text{-a.e. }\omega. \nn
\end{align}
Under \(\bar{\probmeas}^{\pi}\), the label \((i,j)\) determines both the initial component and the terminal component of the pairwise bridge. Under \(\bar{\refmeas}^{\eta}\), the path is generated by the uncontrolled law \(\refmeas^i\), which depends on \(i\) but not dynamically on \(j\). 
Therefore, the posterior label distribution inferred from a full controlled bridge path need not agree with the posterior label distribution inferred from an uncontrolled reference path. 
Thus, except for specially aligned cases (see Proposition \ref{prop:common-potential-zero-gap}), one expects \(\gap(\pi,\eta)>0.\) Consequently, one should be careful while identifying the lifted labeled objective with the original unlabeled path-space objective.
\end{remark}

\begin{myframe}
\begin{proposition}[A common path-potential condition for zero projection gap]
\label{prop:common-potential-zero-gap}
Let \(\pi,\eta\in\Pi(\alpha^0,\alpha^1)\), and suppose that they have the same support \(\mathcal{A}\Let \supp \pi=\supp \eta =\aset[]{(i,j)\in\indexsource\times\indexsink \suchthat \pi_{ij}>0}
= \aset[]{(i,j)\in\indexsource\times\indexsink \suchthat \eta_{ij}>0}.\) 
Assume that, for every \((i,j)\in \mathcal{A}\), one has \(\probmeas^{ij}\ll \mathsf{R}^i\) and that there exist a nonnegative measurable function \(h:\Omega\to [0,+\infty)\) and constants \(c_{ij}>0\), \((i,j)\in \mathcal{A}\), such that
\begin{align}\label{gap:assum:1}
\frac{\dd \probmeas^{ij}}{\dd \mathsf{R}^i}(\omega)=c_{ij}h(\omega)\quad \text{for  }\, \mathsf{R}^i\text{-a.e. }\omega \,\text{ with }(i,j)\in \mathcal{A}.
\end{align}
Assume, moreover, that the constants are compatible with the lifted prior in the sense that there exists $\lambda>0$ such that \(\pi_{ij}c_{ij}=\lambda\eta_{ij}\) for \((i,j)\in \mathcal{A}
\). 
Then, the projection gap vanishes, i.e., \(\gap(\pi,\eta)=0.\)
\end{proposition}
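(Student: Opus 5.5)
The plan is to compute both label posteriors explicitly as Radon--Nikodym densities with respect to a common dominating measure, and check that they coincide $\optmeas^\pi$-a.s. By the preceding Remark, this is equivalent to $\gap(\pi,\eta)=0$. The natural dominating measure is $\mathsf R^{\rho_0}=\sum_i\alpha_i^0\mathsf R^i$. Each $\mathsf R^i\ll\mathsf R^{\rho_0}$ with density $r_i\Let \dd\mathsf R^i/\dd\mathsf R^{\rho_0}$, and $\sum_i\alpha_i^0 r_i=1$ holds $\mathsf R^{\rho_0}$-a.s.

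\textbf{Step 1: the reference posterior.} On the lifted space, $\augmentmeas$ has density $\eta_{ij}r_i(\omega)$ with respect to counting measure $\otimes\,\mathsf R^{\rho_0}$. Dividing by the path marginal $\sum_{i,j}\eta_{ij}r_i=1$ gives
\[
\augmentmeas(Z=(i,j)\mid x_{0:1}=\omega)=\eta_{ij}r_i(\omega).
\]

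\textbf{Step 2: the lifted posterior.} For $(i,j)\in\mathcal A$, assumption \eqref{gap:assum:1} gives $\dd\probmeas^{ij}/\dd\mathsf R^{\rho_0}=c_{ij}h\,r_i$. This uses the chain rule for densities, which is valid because $\probmeas^{ij}\ll\mathsf R^i\ll\mathsf R^{\rho_0}$. Hence $\liftedmeas$ has joint density $\pi_{ij}c_{ij}h(\omega)r_i(\omega)=\lambda\eta_{ij}h(\omega)r_i(\omega)$ on $\mathcal A$, and $0$ off $\mathcal A$. Off $\mathcal A$ we also have $\eta_{ij}=0$, since the supports agree, so the formula $\lambda\eta_{ij}hr_i$ holds for all $(i,j)$. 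The path marginal density of $\optmeas^\pi$ is therefore $\lambda h\sum_{i,j}\eta_{ij}r_i=\lambda h$. On the set $\{h>0\}$, which carries full $\optmeas^\pi$-mass, the posterior is
\[
\liftedmeas(Z=(i,j)\mid x_{0:1}=\omega)=\frac{\lambda\eta_{ij}h r_i}{\lambda h}=\eta_{ij}r_i(\omega).
\]

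\textbf{Step 3: conclusion.} The two posteriors agree $\optmeas^\pi$-a.e., so the integrand in $\gap(\pi,\eta)$ vanishes $\optmeas^\pi$-a.s. and $\gap(\pi,\eta)=0$. As a sanity check, integrating $\lambda h$ gives $\lambda\int h\,\dd\mathsf R^{\rho_0}=1$, which is consistent but not needed.

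\textbf{Main obstacle.} The delicate point is measure-theoretic bookkeeping, namely the null sets in \eqref{gap:assum:1}. The identity there holds only $\mathsf R^i$-a.e., whereas it is needed $\mathsf R^{\rho_0}$-a.e. after multiplication by $r_i$. This is fine because $r_i=0$ outside the support of $\mathsf R^i$, so the product $c_{ij}hr_i$ is a valid version of the density regardless of how $h$ behaves there. I would also verify that the regular conditional probabilities defining $\gap$ may be taken to be these explicit density ratios. This holds because $\mathcal Z$ is finite, so Bayes' formula with a common dominating measure gives versions of the conditionals directly.
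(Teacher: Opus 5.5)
Your proposal is correct and follows essentially the same route as the paper. Both arguments write the two label posteriors as Bayes ratios against the common dominating measure $\sum_{(i,j)\in\mathcal A}\eta_{ij}\mathsf R^i$, which is exactly $\mathsf R^{\rho_0}$ because $\operatorname{supp}\eta=\mathcal A$, and then use $\pi_{ij}c_{ij}=\lambda\eta_{ij}$ to cancel the common factor $\lambda h$. Your simplification of the denominators to $1$ and $\lambda h$, and your null-set bookkeeping, are just a slightly more explicit version of the paper's argument.
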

\end{myframe}

\begin{proof}
Recall Definitions~\eqref{eq:barP_def} and \eqref{eq:barR_def}. 
Since \(\supp\pi=\supp\eta=\mathcal{A}\) and
\(\probmeas^{ij}\ll\mathsf R^i\) for every \((i,j)\in\mathcal A\), we have \(\liftedmeas\ll \bar{\mathsf{R}}^{\eta}\). 
Also recall the projections \eqref{eq:proj:path:law} and \eqref{proj:ref:law}. 
From the projection-gap identity, we have that
\begin{align}
    \gap(\pi,\eta)=
    \E^{\optmeas^\pi}\left[
    \KL\left(\liftedmeas(Z\mid x_{0:1})
   \|\bar{\mathsf{R}}^{\eta}(Z\mid x_{0:1})\right)
    \right]. \nn 
\end{align}
It is therefore enough to show that the two posterior label distributions coincide \(\optmeas^\pi\)-a.e. Let \(M\Let \sum_{(i,j)\in\mathcal A}\eta_{ij}\mathsf R^i\). For each \(i\), write \(r_i\Let \frac{\dd\mathsf R^i}{\dd M}.\) 
Using \(\frac{\dd\probmeas^{ij}}{\dd\mathsf R^i}=c_{ij}h,\)
the posterior label distribution under the lifted controlled law is, for \(\optmeas^\pi\)-a.e. \(\omega\),
\begin{align}
    \liftedmeas(Z=(i,j)\mid \omega) =
    \frac{\pi_{ij}c_{ij}h(\omega)r_i(\omega)}
    {\sum_{(k,\ell)\in\mathcal A}\pi_{k\ell}c_{k\ell}h(\omega)r_k(\omega)}. \nn
\end{align}
Similarly, the posterior label distribution under the lifted reference law is
\begin{align}
    \bar{\mathsf R}^{\eta}(Z=(i,j)\mid \omega) = \frac{\eta_{ij}r_i(\omega)}
    {\sum_{(k,\ell)\in\mathcal A}\eta_{k\ell}r_k(\omega)}.\nn
\end{align}
By the compatibility condition \(\pi_{ij}c_{ij}=\lambda\eta_{ij}\) for \((i,j)\in\mathcal{A}\), the numerator and denominator in the first expression are both multiplied by the same factor \(\lambda h(\omega)\). 
Hence,
\begin{align}
\bar{\probmeas}^{\pi}\left(Z=(i,j)\mid x_{0:1}=\omega\right)
= \bar{\refmeas}^{\eta}\left(Z=(i,j)\mid x_{0:1}=\omega\right), \nn
\end{align}
for every \((i,j)\in\mathcal{A}\), \(\optmeas^\pi\)-a.e. \(\omega\). 
Therefore, 
the conditional relative entropy inside the definition of \(\gap(\pi,\eta)\) is zero \(\optmeas^\pi\)-a.e., and hence \(\gap(\pi,\eta)=0\).
\end{proof}

\begin{remark}[Interpretation and a non-vacuous zero-gap class]
\label{rem:common-potential-interpretation}
Proposition \ref{prop:common-potential-zero-gap} is a path-space statement.
Assumption \ref{gap:assum:1} is on the likelihood ratios of the pairwise Schr\"{o}dinger bridge laws. 
The reason the projection gap vanishes in that case is that, after observing the projected path \(\omega=x_{0:1}\), the same factor
\(h(\omega)\) appears in all active posterior label weights and cancels during normalization.
The compatibility condition \(\pi_{ij}c_{ij}=\lambda\eta_{ij}\) for \((i,j)\in\mathcal{A}\),
then makes the posterior label distribution under the lifted controlled law coincide with the posterior label distribution under the lifted reference law.
Hence, the conditional label-information term is zero.

The condition \eqref{gap:assum:1}, together with the compatibility condition \(\pi_{ij}c_{ij}=\lambda\eta_{ij}\) for \((i,j)\in \mathcal{A}\), is not vacuous.
A simple Gaussian-mixture class satisfying this condition is obtained from a common translation drift. 
For example,
let \(\eps>0\) and \(N_1=N_2=N\), let the active assignment graph be diagonal, \(\mathcal A=\aset[]{(i,i) \suchthat i=1,\ldots,N},\) and take \(\pi_{ii}=\eta_{ii}=\alpha_i^0\) for all \(i=1,\ldots,N.\)
Fix \(v\in \R[d]\), and let \(p_i^0=\mathcal N(m_i^0,\Sigma_i^0),\) and \(p_i^1=\mathcal N(m_i^0+v,\Sigma_i^0+\eps I).\)
Consider the controlled Brownian dynamics
\begin{align}
    \dd x_t=v\,\dd t+\sqrt{\eps}\,\dd w_t\quad \text{with }\,x_0\sim p_i^0. \nn
\end{align}
Its path law will be denoted by \(\probmeas^{ii}\).
Since the drift is constant, Girsanov's formula gives, with respect to the uncontrolled reference law \(\mathsf R^i\),
\begin{align}
  \frac{\dd\probmeas^{ii}}{\dd\mathsf R^i}(\omega)=\exp\left(
        \frac{1}{\eps}v^\top(x_1(\omega)-x_0(\omega)) -
        \frac{1}{2\eps}\|v\|^2 \right).   \nn
\end{align}
Therefore, Proposition~\ref{prop:common-potential-zero-gap} applies with
\begin{align}\label{eq:h:expression}
    \omega \mapsto  h(\omega) \Let\exp\left(
        \frac{1}{\eps}v^\top(x_1(\omega)-x_0(\omega)) - \frac{1}{2\eps}\|v\|^2\right)\quad\text{with }\,c_{ii}=1.
\end{align}
Since \(\pi_{ii}=\eta_{ii}\), the compatibility condition holds with \(\lambda=1\). 
Note that \(\probmeas^{ii}\) is indeed the pairwise Schr\"{o}dinger bridge from \(p_i^0\) to \(p_i^1\). 
Under the above controlled dynamics, \(x_1\sim \mathcal N(m_i^0+v,\Sigma_i^0+\eps I)=p_i^1.\) 
Moreover, for any admissible drift \(u\) steering \(p_i^0\) to \(p_i^1\), the mean displacement satisfies \(\E\left[\int_0^1 u_t(x_t)\,\dd t\right]=v.\) 
Hence, Jensen's inequality gives
\begin{align}
    \E\left[\int_0^1 \frac{1}{2}\|u_t(x_t)\|^2\,\dd t\right]
    \geq \frac{1}{2} \left\| \E\left[\int_0^1 u_t(x_t)\,\dd t\right] \right\|^2 = \frac{1}{2}\|v\|^2. \nn
\end{align}
The constant drift \(u_t\equiv v\) attains this lower bound. 
Therefore, it is optimal, and the corresponding path law is the pairwise Schr\"{o}dinger bridge \(\probmeas^{ii}\). 
Consequently, for this prescribed-assignment Gaussian-mixture transformation, \(\gap(\pi,\eta)=0.\) This example does not collapse the mixture to a single Gaussian: the source means \(m_i^0\), and covariances \(\Sigma_i^0\) may still vary with \(i\). 
In this example, Assumption \eqref{gap:assum:1} simply says that all active subpopulations undergo the same Brownian translation mechanism, so their pairwise bridge likelihood ratios share the common path-dependent factor \(h\) given in \eqref{eq:h:expression}.
\end{remark}

\begin{remark}[A canonical choice of the prior coupling]\label{rem:eta:interp}
The choice of the prior coupling \(\eta\) should respect the prescribed
marginals \(\alpha^0\) and \(\alpha^1\).
In particular, the entrywise uniform matrix \(\eta_{ij}={1}/{N_1N_2}\) belongs to \(\Pi(\alpha^0,\alpha^1)\) only in the special case where both
mixture weights are uniform, i.e., \( \alpha_i^0={1}/{N_1},\) and \(\alpha_j^1={1}/{N_2}.\) For general mixture weights, a natural neutral choice is, instead, the product coupling
\[
    \eta_{ij} \Let \alpha_i^0\alpha_j^1.
\]
This corresponds to an independent prior between the source label
\(I\in\indexsource\) and the target label \(J\in\indexsink\). 
This choice has a useful \emph{information-theoretic interpretation}. 
Since
\(\pi\in\Pi(\alpha^0,\alpha^1)\), its marginals are fixed and equal to
\(\alpha^0\) and \(\alpha^1\). 
Therefore,
\begin{align}
\KL(\pi\|\alpha^0\otimes\alpha^1)=\sum_{i,j}\pi_{ij}
    \log\left(\frac{\pi_{ij}}{\alpha_i^0\alpha_j^1}\right)
    \teL \muinf_\pi(I;J),   \nn 
\end{align}
where \(\muinf_\pi(I;J)\) denotes the mutual information between the initial component label \(I\) and the terminal component label \(J\) under the joint law \(\pi\). 
Hence, for the independent prior \(\eta=\alpha^0\otimes\alpha^1\), the finite-dimensional problem becomes
\begin{align}
\min_{\pi\in\Pi(\alpha^0,\alpha^1)} \left\{\sum_{i,j}\pi_{ij}C_{ij}+ \eps \, \muinf_\pi(I;J)\right\}. \nn 
\end{align}
Thus, the first term favors low-energy component-to-component bridges, while the second term penalizes excessive dependence between source and target labels. 
The optimizer creates a structured assignment only when the reduction in pairwise bridge cost justifies the associated information cost. 
In this sense, \(\eta=\alpha^0\otimes\alpha^1\) is a canonical default choice when no prior component-assignment information is available. \textcolor{black}{Moreover, the lifted formulation is not restricted to this neutral product prior --- it is a natural benefit of our approach. 
This feature is illustrated numerically in \S\ref{subsubsec:shape:matching}, 
Figure~\ref{fig:silhouette_eta_prior_ablation}.}
\end{remark}

The identity \eqref{eq:projection_gap_energy} above quantifies the loss incurred when the label coordinate is removed from the lifted law.
It compares the labeled law \(\liftedmeas\) on \(\mathcal{Z}\times\Omega\) with the unlabeled path law \(\optmeas^\pi\) on \(\Omega\). 
However, \(\optmeas^\pi\) is still a path-space object and, in general, retains \emph{hidden memory} of the source--target label through the observed trajectory. 
Thus, \(\optmeas^\pi\) should not automatically be identified with the law of a Markov diffusion driven by a state-feedback drift.
Following \cite{ref:GoWithTheFlow}, we next construct such a state-feedback drift at the Eulerian level and later quantify the additional passage from the generally non-Markov path law \(\optmeas^\pi\) to the Markov law generated by this feedback.


\subsection{Projecting back: regularity and feasibility}\label{subsec:project:back}
We now show that the standard posterior-averaged mixture feedback \cite{ref:GoWithTheFlow} arises canonically from the lifted labeled law. 
Thus, the following construction should not be viewed as an independent ansatz; it is the Eulerian Markov projection of \(\liftedmeas\). 
Fix \(\pi\in\Pi(\alpha^0,\alpha^1)\), and consider the lifted law \(\liftedmeas\). 
For every Borel set \(A\in \Borelsigalg(\R[d])\), the joint law of the label \(Z\) and the state \(x_t\) satisfies
\begin{align}
\liftedmeas\bigl(Z=(i,j),\,x_t\in A\bigr)=\pi_{ij}\int_A \rho_t^{ij}(x)\,\dd x. \nn
\end{align}
Consequently, after forgetting the label, the time-\(t\) state density is
\begin{align}\label{eq:proj:density}
x \mapsto \rho_t^\pi(x)\Let \sum_{i,j}\pi_{ij}\,\rho_t^{ij}(x).
\end{align}
Moreover, the conditional probability of the hidden label given the current state is
\begin{equation}\label{eq:gamma_ij}
\gamma_{ij}(t,x) \Let \liftedmeas\bigl(Z=(i,j)\mid x_t=x\bigr)=\frac{\pi_{ij}\rho_t^{ij}(x)}{\rho_t^\pi(x)}.
\end{equation}
Since the drift under the conditional bridge \(\probmeas^{ij}\) is \(u_t^{ij}\), the state-only Markov drift induced by the lifted construction is obtained by posterior averaging over the unobserved label 
\begin{equation}\label{eq:projected_drift}
x \mapsto \bar u_t^\pi(x) \Let \sum_{i,j}\gamma_{ij}(t,x)\,u_t^{ij}(x) =
\frac{\sum_{i,j}\pi_{ij}\rho_t^{ij}(x)u_t^{ij}(x)}{\rho_t^\pi(x)}.
\end{equation}
Equivalently, we have 
\[
\rho_t^\pi(x)\bar u_t^\pi(x)=\sum_{i,j}\pi_{ij}\rho_t^{ij}(x)u_t^{ij}(x).
\]
This identity is the key reason why the pair \((\rho_t^\pi,\bar u_t^\pi)\) satisfies the Fokker--Planck equation after summing the componentwise equations, which we will show later in Proposition \ref{prop:projected_feasibility_energy}. 
We also note that the objects in \eqref{eq:proj:density}--\eqref{eq:projected_drift} are algebraically identical to those employed in the Gaussian-mixture bridge constructions in \cite{ref:GoWithTheFlow}. 
The contribution here is not the formulas themselves, but their derivation from the lifted path-space law and its role in the projection analysis developed above.

\begin{myframe}
\begin{lemma}[Regularity of the projected drift]
\label{lem:projected_drift_regularity}
Let \(\pi\in\Pi(\alpha^0,\alpha^1)\), and recall the expressions \eqref{eq:proj:density}, \eqref{eq:gamma_ij}, and \eqref{eq:projected_drift}.
Then, \(\bar u^\pi\in\mathcal{U}_{\rm reg}\). 
Specifically, for every \(R>0\), there exists \(L_R(\cdot)\in L^1(0,1)\) such that
\begin{align}
\norm{\bar u_t^\pi(x)-\bar u_t^\pi(y)} \leq L_R(t)\norm{x-y}\qquad \text{with } \norm{x},\norm{y}\leq R, \nn 
\end{align}
for a.e. \(t\in\lcrc{0}{1}\), and \(a_\pi(\cdot)\in L^2(0,1)\) such that
\begin{align}
\norm{\bar u_t^\pi(x)} \leq a_\pi(t)(1+\norm{x}) \qquad \text{for }x\in\R[d], \nn
\end{align}
for a.e. \(t\in\lcrc{0}{1}\).
\end{lemma}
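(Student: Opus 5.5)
The plan is to reduce everything to the explicit affine structure of the pairwise drifts from Proposition~\ref{prop:pairwise_gaussian_bridge_formulas}, and then to treat $\bar u^\pi$ as a smooth convex combination of affine maps. Pairs with $\pi_{ij}=0$ contribute nothing to \eqref{eq:proj:density}--\eqref{eq:projected_drift}, so we discard them. Let $\mathcal A\Let\supp\pi$.

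\emph{Step 1: uniform bounds on the Gaussian data.} By Proposition~\ref{prop:pairwise_gaussian_bridge_formulas}, $t\mapsto\Sigma_t^{ij}$ is a polynomial with values in $\mathbb S^d_{++}$ for every $t\in\lcrc{0}{1}$. By continuity and compactness, there are constants $0<\underline\sigma\le\overline\sigma$ with
\[
\underline\sigma I\preceq\Sigma_t^{ij}\preceq\overline\sigma I
\]
for all $t\in\lcrc{0}{1}$ and $(i,j)\in\mathcal A$. Hence $(\Sigma_t^{ij})^{-1}$ is continuous and bounded on $\lcrc{0}{1}$. Since $S_t^{ij}$ is affine in $t$, the matrix $A_t^{ij}=S_t^{ij}(\Sigma_t^{ij})^{-1}$ extends continuously to $t=1$. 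The value at the single point $t=1$ is irrelevant for ``a.e.\ $t$'' statements anyway. Likewise $m_t^{ij}$ and $c_{ij}$ are bounded. Set
\[
\bar A\Let\sup_{t,(i,j)}\|A_t^{ij}\|,\qquad \bar m\Let\sup_{t,(i,j)}\bigl(\|m_t^{ij}\|+\|c_{ij}\|\bigr).
\]
These constants are all finite. This is the step I expect to be the only genuine obstacle. Proposition~\ref{prop:pairwise_gaussian_bridge_formulas} states the drift only for $t\in[0,1)$, so one must check that $A_t^{ij}$ does not blow up as $t\uparrow1$. Nondegeneracy of $\Sigma_j^1$, which gives $\Sigma_1^{ij}=\Sigma_j^1\succ0$, is exactly what rules this out.

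\emph{Step 2: linear growth.} Since $\gamma_{ij}(t,x)\ge0$ and $\sum_{(i,j)\in\mathcal A}\gamma_{ij}=1$, the drift $\bar u_t^\pi(x)$ is a convex combination of the vectors $u_t^{ij}(x)$. Therefore
\[
\|\bar u_t^\pi(x)\|\le\max_{(i,j)}\|u_t^{ij}(x)\|\le\bar A(\|x\|+\bar m)+\bar m\le a_\pi(1+\|x\|)
\]
with a constant $a_\pi$, which is trivially in $L^2(0,1)$. The weights are well defined because $\rho_t^\pi(x)>0$ everywhere, being a positive combination of nondegenerate Gaussians.

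\emph{Step 3: local Lipschitz bound.} Each $\rho_t^{ij}$ is smooth and positive, so $\bar u_t^\pi$ is $C^1$ in $x$. We differentiate to get
\[
\nabla_x\bar u_t^\pi=\sum_{ij}\gamma_{ij}A_t^{ij}+\sum_{ij}u_t^{ij}\otimes\nabla_x\gamma_{ij}.
\]
The score-difference formula gives
\[
\nabla_x\gamma_{ij}=\gamma_{ij}\Bigl(s^{ij}-\sum_{k\ell}\gamma_{k\ell}s^{k\ell}\Bigr),\qquad s^{ij}\Let-(\Sigma_t^{ij})^{-1}(x-m_t^{ij}).
\]
By Step~1 we have $\|s^{ij}\|\le\underline\sigma^{-1}(R+\bar m)$ on $\{\|x\|\le R\}$. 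Also $\|u^{ij}\|\le\bar A(R+\bar m)+\bar m$ there, and $\gamma_{ij}\le1$. So $\|\nabla_x\bar u_t^\pi(x)\|\le L_R$ for a constant $L_R$ independent of $t$. The mean value inequality on the convex ball $\{\|x\|\le R\}$ then yields the Lipschitz estimate with $L_R\in L^1(0,1)$.

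\emph{Step 4: measurability.} Joint Borel measurability of $(t,x)\mapsto\bar u_t^\pi(x)$ follows from its explicit continuity in $(t,x)$ on $[0,1)\times\R[d]$. Together with Steps~2 and~3 this gives $\bar u^\pi\in\mathcal U_{\rm reg}$.
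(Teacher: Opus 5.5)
Your proposal is correct and follows essentially the same route as the paper: bounded affine coefficients of the pairwise drifts from the continuity and positive-definiteness of \(\Sigma_t^{ij}\) on \([0,1]\), then linear growth from the convex-combination structure of \(\bar u_t^\pi\), then a uniform Jacobian bound on \(\{\|x\|\le R\}\) followed by the mean value inequality. The only minor difference is how you bound \(\nabla_x\gamma_{ij}\): you use the explicit identity \(\nabla_x\gamma_{ij}=\gamma_{ij}\bigl(s^{ij}-\sum_{k\ell}\gamma_{k\ell}s^{k\ell}\bigr)\), whereas the paper uses a positive lower bound for \(\rho_t^\pi\) on the compact set \([0,1]\times\{\|x\|\le R\}\); your version gives a more explicit constant but the effect is the same.
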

\end{myframe}

\begin{proof}
Recall from Proposition~\ref{prop:pairwise_gaussian_bridge_formulas} that, for each \((i,j)\in\indexsource\times\indexsink\), the density \(x\mapsto\rho_t^{ij}(x)\) is smooth and strictly positive, while the drift has the affine form
\begin{align}
x \mapsto u_t^{ij}(x)=A_t^{ij}(x-m_t^{ij})+c^{ij}
\quad \text{for }t\in [0,1). \nn
\end{align}
Moreover, by the explicit formulas for the pairwise Gaussian bridge in Section \ref{subsec:Pairwise:formulas},
%
the maps
\(t\mapsto m_t^{ij}\), \(t\mapsto S_t^{ij}\), and \(t\mapsto\Sigma_t^{ij}\) are continuous on \(\lcrc{0}{1}\), with \(\Sigma_t^{ij}\in\mathbb S_{++}^d\) for every \(t\in\lcrc{0}{1}\). 
Hence, \(t\mapsto(\Sigma_t^{ij})^{-1}\) is continuous and uniformly bounded on \(\lcrc{0}{1}\). Since \(A_t^{ij}=S_t^{ij}(\Sigma_t^{ij})^{-1}\), it follows that \(t\mapsto A_t^{ij}\) is uniformly bounded on \(\lcrc{0}{1}\).

We first prove the linear-growth estimate.
For each \((i,j)\), the affine representation gives
\begin{align}
\| {u_t^{ij}(x)} \| \leq \| A_t^{ij}\| \norm{x} + \| A_t^{ij}m_t^{ij}\| +
\| c^{ij}\|. \nn
\end{align}
Thus, setting \(a_{ij}(t) \Let \| A_t^{ij} \| +\| A_t^{ij}m_t^{ij}\| +\| c^{ij}\|,\) we have
\begin{align}
\| u_t^{ij}(x) \| \leq a_{ij}(t)(1+\norm{x}). \nn
\end{align}
By the boundedness of \(A_t^{ij}\), \(m_t^{ij}\), and \(c^{ij}\), each \(a_{ij}\) belongs to
\(L^\infty(0,1)\), hence also to \(L^2(0,1)\). Since there are only finitely many component pairs, \(a_\pi(t) \Let \max_{i,j}a_{ij}(t)\) is bounded and thus belongs to \(L^2(0,1)\). 
Using the facts that \(\gamma_{ij}^\pi(t,x)\geq 0\) and \(\sum_{i,j}\gamma_{ij}^\pi(t,x)=1\), we obtain
\begin{align}
\| \bar u_t^\pi(x) \| \leq \sum_{i,j}\gamma_{ij}^\pi(t,x)\| u_t^{ij}(x) \| \leq a_\pi(t)(1+\norm{x}). \nn 
\end{align}
This proves the linear-growth condition.

It remains to prove the local Lipschitz condition. For a fixed \(R>0\), we denote by \(\Ball_R\Let\aset[]{x\in\R[d]\suchthat \norm{x}\le R}\), the closed Euclidean ball of radius \(R\) centered at the origin. 
Observe that: 
\begin{itemize}[leftmargin=*, label=\(\circ\)]
    \item 
    Since each \(\rho_t^{ij}\) is a smooth positive Gaussian density and \(\rho_t^\pi=\sum_{i,j}\pi_{ij}\rho_t^{ij}\), the function \((t,x)\mapsto\rho_t^\pi(x)\) is continuous and strictly positive on the compact set \(\lcrc{0}{1}\times \Ball_R\). 
    Therefore, \(\rho_t^\pi\) has a positive minimum on this compact set \cite[Chapter 1]{ref:santambrogio2023course}. 
    Consequently, each posterior weight \(x \mapsto \gamma_{ij}^\pi(t,x)\) is continuously differentiable in \(x\), and \(\nabla_x\gamma_{ij}^\pi\) is uniformly bounded on \(\lcrc{0}{1}\times  \Ball_R\).
    
    \item 
    Since \(u_t^{ij}\) is affine with uniformly bounded affine coefficients, both \(u_t^{ij}\) and \(\nabla_xu_t^{ij}=A_t^{ij}\) are uniformly bounded on \(\lcrc{0}{1}\times \Ball_R\). 
    Differentiating \(\bar u_t^\pi\) with respect to \(x\), we get \[\nabla_x\bar u_t^\pi(x) = \sum_{i,j} \left[ u_t^{ij}(x)\otimes\nabla_x\gamma_{ij}^\pi(t,x) + \gamma_{ij}^\pi(t,x)A_t^{ij} \right].\]
    The right-hand side is uniformly bounded on \(\lcrc{0}{1}\times  \Ball_R\). 
    Hence, there exists a constant \(C_R<+\infty\) such that \[ \sup_{t\in\lcrc{0}{1}}\sup_{x\in \Ball_R} \norm{\nabla_x\bar u_t^\pi(x)} \leq C_R. \]
\end{itemize} 
From the mean-value theorem~\cite[Chapter 5]{ref:Rud-Analysis}
\begin{align}
\norm{\bar u_t^\pi(x)-\bar u_t^\pi(y)} \leq C_R\norm{x-y} \quad\text{with } x,y \in \Ball_R.\nn
\end{align}
Thus, we may choose \(t \mapsto L_R(t)\Let C_R\). 
Therefore, \(\bar u^\pi\) satisfies the local Lipschitz condition required in \(\mathcal U_{\rm reg}\). Combining the preceding linear-growth estimate and the local-Lipschitz estimate, we conclude that \(\bar u^\pi\in\mathcal U_{\rm reg}\). The proof is complete.
\end{proof}

\subsubsection{Feasibility of the projected drift}

Having constructed the posterior-averaged drift \(\bar u^\pi\), we state the corresponding feasibility and energy properties. 
These are direct consequences of the componentwise Fokker--Planck equations and the convexity of the kinetic energy, and are included to make the connection with Problem \ref{prob:density-control} explicit.
\begin{myframe}
\begin{proposition}[Projected feasibility and energy bound]
\label{prop:projected_feasibility_energy}
Let \(\pi\in\Pi(\alpha^0,\alpha^1)\), and suppose that the posterior-averaged drift
\(\bar u^\pi\) belongs to \(\mathcal U_{\rm reg}\). 
Then, the pair
\((\rho_t^\pi,\bar u_t^\pi)\) satisfies
\begin{align}
\partial_t\rho_t^\pi+\nabla\cdot(\rho_t^\pi\bar u_t^\pi)-\frac{\eps}{2}\Delta\rho_t^\pi=0, \nn
\end{align}
on \((0,1)\times\R[d]\). 
Moreover, \(\rho_0^\pi=\rho_0\), \(\rho_1^\pi=\rho_1.\) Futhermore, the projected Markov feedback satisfies the kinetic-energy bound
\begin{align}
    \mathcal{J}_{\rm proj}(\pi)\Let \int_0^1\int_{\R[d]}
    \frac{1}{2}\rho_t^\pi(x) \| \bar u_t^\pi(x)\|^2\,\dd x\,\dd t
    \le
    \mathcal J_{\rm lift}(\pi) \Let \sum_{i,j}\pi_{ij}C_{ij}. \nn 
\end{align}
\end{proposition}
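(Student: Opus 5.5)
The argument has three parts: the Fokker--Planck equation, the endpoint constraints, and the energy bound.

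\textbf{Fokker--Planck equation.} We start from the componentwise Fokker--Planck equations. For each $(i,j)$, Proposition~\ref{prop:pairwise_gaussian_bridge_formulas} shows that $\probmeas^{ij}$ is the law of the Markov diffusion $\dd x_t=u_t^{ij}(x_t)\,\dd t+\sqrt{\eps}\,\dd w_t$ started from $p_i^0$. Its marginal $\rho_t^{ij}=\mathcal N(m_t^{ij},\Sigma_t^{ij})$ is smooth in $(t,x)$ on $(0,1)\times\R[d]$, and the drift is affine with continuous coefficients. Hence
\[
\partial_t\rho_t^{ij}+\nabla\cdot(\rho_t^{ij}u_t^{ij})-\tfrac{\eps}{2}\Delta\rho_t^{ij}=0
\]
holds classically. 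This can be obtained from It\^o's formula applied to test functions and then integrated by parts. Alternatively, one can verify it directly from the Gaussian formulas: the mean satisfies $\dot m_t^{ij}=c_{ij}$, and the covariance satisfies $\dot\Sigma_t^{ij}=A_t^{ij}\Sigma_t^{ij}+\Sigma_t^{ij}(A_t^{ij})^\top+\eps I$, which reduces to $S_t^{ij}+(S_t^{ij})^\top+\eps I$. These are exactly the moment equations of a Gaussian solution of the linear Fokker--Planck equation.

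We then multiply the $(i,j)$ equation by $\pi_{ij}$ and sum over all pairs. The equation is linear, and the key identity is $\rho_t^\pi\bar u_t^\pi=\sum_{i,j}\pi_{ij}\rho_t^{ij}u_t^{ij}$ from \eqref{eq:projected_drift}. Together these give the Fokker--Planck equation for $(\rho_t^\pi,\bar u_t^\pi)$ directly.

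\textbf{Endpoint constraints.} The identities $\rho_0^\pi=\rho_0$ and $\rho_1^\pi=\rho_1$ are exactly Proposition~\ref{prop:endpoints_auto}.

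\textbf{Energy bound.} Fix $(t,x)$. The weights $\gamma_{ij}(t,x)$ are nonnegative and sum to one, so Jensen's inequality (convexity of $\norm{\cdot}^2$) gives
\[
\norm{\bar u_t^\pi(x)}^2\le\sum_{i,j}\gamma_{ij}(t,x)\norm{u_t^{ij}(x)}^2 .
\]
Multiplying by $\rho_t^\pi(x)$ and using $\rho_t^\pi\gamma_{ij}=\pi_{ij}\rho_t^{ij}$, we get
\[
\rho_t^\pi\norm{\bar u_t^\pi}^2\le\sum_{i,j}\pi_{ij}\rho_t^{ij}\norm{u_t^{ij}}^2 .
\]
Since all terms are nonnegative, Tonelli's theorem lets us integrate over $\lcrc{0}{1}\times\R[d]$ and exchange the sum and integral. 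Proposition~\ref{prop:pairwise_gaussian_bridge_formulas}\ref{prop:GSB:3} then identifies each integral with $2C_{ij}$, which yields $\mathcal J_{\rm proj}(\pi)\le\sum_{i,j}\pi_{ij}C_{ij}$.

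\textbf{Main obstacle.} The delicate step is justifying the componentwise Fokker--Planck equations near $t=1$, where the affine drift formula is stated only for $t<1$. This is harmless: the equation is only claimed on the open interval $(0,1)$, and $\Sigma_t^{ij}$ remains positive definite and continuous up to $t=1$. Continuity of $t\mapsto\rho_t^\pi$ at $t=0,1$ then connects the interior equation with the endpoint marginals. The remaining interchanges of sums, derivatives and integrals involve finitely many smooth Gaussian terms and pose no difficulty.
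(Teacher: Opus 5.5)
Your proposal is correct and follows the paper's proof essentially step by step. You weight the componentwise Fokker--Planck equations by $\pi_{ij}$ and sum them using $\rho_t^\pi\bar u_t^\pi=\sum_{i,j}\pi_{ij}\rho_t^{ij}u_t^{ij}$, cite Proposition~\ref{prop:endpoints_auto} for the endpoints, and get the energy bound from Jensen's inequality together with $\rho_t^\pi\gamma_{ij}=\pi_{ij}\rho_t^{ij}$. Your extra check of the componentwise equations through the moment ODEs $\dot m_t^{ij}=c_{ij}$ and $\dot\Sigma_t^{ij}=S_t^{ij}+(S_t^{ij})^{\top}+\eps I$ is consistent with the closed-form formulas, and it fills in a step the paper takes as given.
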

\end{myframe}

\begin{proof}
By definition,
for each \((i,j)\), the pairwise bridge density \(\rho_t^{ij}\) and  corresponding drift \(u_t^{ij}\) satisfy
\begin{align}
    \partial_t\rho_t^{ij}+\nabla\cdot(\rho_t^{ij}u_t^{ij}) -\frac{\eps}{2}\Delta\rho_t^{ij}= 0. \nn
\end{align}
Multiplying by \(\pi_{ij}\) and summing over \((i,j)\), we obtain
\begin{align}
\partial_t\Big(\sum_{i,j}\pi_{ij}\rho_t^{ij}\Big)+\nabla\cdot\Big(\sum_{i,j}\pi_{ij}\rho_t^{ij}u_t^{ij}\Big)-\frac{\eps}{2}\Delta\Big(\sum_{i,j}\pi_{ij}\rho_t^{ij}\Big)
    =0. \nn
\end{align}
By definition, \(\rho_t^\pi=\sum_{i,j}\pi_{ij}\rho_t^{ij}\). 
Moreover, from~\eqref{eq:projected_drift}, \(\rho_t^\pi\bar u_t^\pi=\sum_{i,j}\pi_{ij}\rho_t^{ij}u_t^{ij}.\) 
Substituting these identities yields
\(\partial_t\rho_t^\pi+\nabla\cdot(\rho_t^\pi\bar u_t^\pi)-\frac{\eps}{2}\Delta\rho_t^\pi=0.\) 
The endpoint identities follow from Proposition~\ref{prop:endpoints_auto}. It remains to prove the energy bound. Applying Jensen's inequality on \(\bar u_t^\pi(x)\) gives
\begin{align}
\norm{\bar u_t^\pi(x)}^2\le \sum_{i,j}\gamma_{ij}^\pi(t,x)\| u_t^{ij}(x)\|^2. \nn
\end{align}
Multiplying by \(\rho_t^\pi(x)\) and using \(\rho_t^\pi(x)\gamma_{ij}^\pi(t,x) = \pi_{ij}\rho_t^{ij}(x),\) we obtain
\begin{align}
\rho_t^\pi(x)\norm{\bar u_t^\pi(x)}^2 \le \sum_{i,j}\pi_{ij}\rho_t^{ij}(x)\| u_t^{ij}(x)\|^2. \nn
\end{align}
Finally, integrating over \(\lcrc{0}{1}\times\R[d]\) gives \(\mathcal J_{\rm proj}(\pi)\le\mathcal J_{\rm lift}(\pi)\).
\end{proof}

We have the following immediate corollary.
\begin{myframe}
\begin{corollary}[Admissibility for Problem \ref{prob:density-control}]\label{cor:projected_drift_admissible}
For every \(\pi\in\Pi(\alpha^0,\alpha^1)\), the posterior-averaged drift \(\bar u^\pi\) belongs to \(\mathcal U(\rho_0,\rho_1)\). 
In particular, the admissible set in Problem~1 is nonempty.
\end{corollary}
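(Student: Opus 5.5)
The corollary needs three facts about $\bar u^\pi$: it is in $\mathcal U_{\rm reg}$, the diffusion it drives has the right endpoint marginals, and its energy is finite. Membership $\bar u^\pi\in\mathcal U_{\rm reg}$ is exactly Lemma~\ref{lem:projected_drift_regularity}, which holds for every $\pi\in\Pi(\alpha^0,\alpha^1)$. Since $\rho_0$ is a finite Gaussian mixture, it has finite second moment, so \eqref{eq:dynamics} with $u=\bar u^\pi$ and $x_0\sim\rho_0$ has a unique strong solution. Let $\mu_t$ denote its time-$t$ law.

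The main step is to show $\mu_t=\rho_t^\pi$ for all $t\in[0,1]$. In particular $\mu_1=\rho_1^\pi=\rho_1$ by Proposition~\ref{prop:endpoints_auto}. Proposition~\ref{prop:projected_feasibility_energy} only says that $\rho_t^\pi$ solves the Fokker--Planck equation with drift $\bar u^\pi$ and initial datum $\rho_0$. The law $\mu_t$ solves the same equation in the weak sense, by It\^o's formula applied to test functions. So I need uniqueness of weak measure-valued solutions of
\[
\partial_t\mu_t+\nabla\cdot(\mu_t\bar u_t^\pi)-\tfrac{\eps}{2}\Delta\mu_t=0,\qquad \mu_0=\rho_0 .
\]
I would get this from the superposition principle (Figueiredo/Trevisan). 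Under the integrability condition $\int_0^1\!\int \|\bar u_t^\pi\|\,\dd\rho_t^\pi\,\dd t<\infty$, any narrowly continuous weak solution is the time-marginal flow of a martingale-problem solution. Local Lipschitz continuity plus linear growth give pathwise uniqueness, hence uniqueness in law for the martingale problem (Yamada--Watanabe). So every such solution equals $\mu_t$. The integrability condition holds because $\rho_t^\pi$ is a mixture of Gaussians whose covariances $\Sigma_t^{ij}$ stay bounded, so it has uniformly bounded second moments, and $\|\bar u^\pi_t(x)\|\le a_\pi(t)(1+\|x\|)$ with $a_\pi$ bounded. Narrow continuity of $t\mapsto\rho_t^\pi$ is clear from the continuity of $m_t^{ij}$ and $\Sigma_t^{ij}$.

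This identification is the step I expect to be the main obstacle. One subtlety is that the affine representation of $u^{ij}_t$ is stated for $t\in[0,1)$, but it is enough to work on $[0,1)$ and pass to $t\to1$ by continuity of both $\mu_t$ and $\rho_t^\pi$. If the superposition route turns out to be too heavy, the fallback is a duality argument. One solves the backward Kolmogorov equation with smooth compactly supported terminal data, using the locally Lipschitz, linearly growing coefficients, and tests the difference of the two solutions against it.

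Given $\mu_t=\rho_t^\pi$, finiteness of the energy is immediate:
\[
\E\int_0^1\|\bar u^\pi_t(x_t)\|^2\,\dd t=\int_0^1\!\!\int\rho_t^\pi\|\bar u^\pi_t\|^2\,\dd x\,\dd t=2\mathcal J_{\rm proj}(\pi)\le 2\sum_{i,j}\pi_{ij}C_{ij}<\infty,
\]
using the bound in Proposition~\ref{prop:projected_feasibility_energy} and the finiteness of each $C_{ij}$ from Proposition~\ref{prop:pairwise_gaussian_bridge_formulas}. Hence $\bar u^\pi\in\mathcal U(\rho_0,\rho_1)$. Nonemptiness follows because $\Pi(\alpha^0,\alpha^1)$ is nonempty: it contains, for example, $\alpha^0\otimes\alpha^1$.
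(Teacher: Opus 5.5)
Your proposal is correct and follows the same route as the paper: Lemma~\ref{lem:projected_drift_regularity} gives a unique strong solution, the time marginals are identified with $\rho_t^\pi$ via uniqueness of the Fokker--Planck flow together with Proposition~\ref{prop:projected_feasibility_energy}, and the energy bound from that proposition gives finiteness of $\E\int_0^1\|\bar u_t^\pi(x_t)\|^2\,\dd t$. The only difference is that you actually justify the uniqueness step, via the superposition principle (due to Figalli and Trevisan---not ``Figueiredo'') together with Yamada--Watanabe, whereas the paper simply asserts uniqueness of the marginal flow for drifts in $\mathcal U_{\rm reg}$.
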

\end{myframe}

\begin{proof}
By Lemma~\ref{lem:projected_drift_regularity}, \(\bar u^\pi\in\mathcal U_{\rm reg}\), so the SDE \(\d x_t=\bar u_t^\pi(x_t)\,\dd t+\sqrt{\eps}\,\dd w_t,\) with \(x_0\sim\rho_0\) admits a unique strong solution. 
Its one-time marginals solve the Fokker--Planck equation associated with \(\bar u^\pi\). 
By Proposition~\ref{prop:projected_feasibility_energy}, the explicitly constructed density \(\rho_t^\pi\) solves the same Fokker--Planck equation and satisfies \(\rho_0^\pi=\rho_0\). 
By uniqueness of the marginal/Fokker--Planck flow for drifts in \(\mathcal U_{\rm reg}\), the law of \(x_t\) has density \(\rho_t^\pi\). 
In particular, \(x_1\sim\rho_1\). 
Finally, Proposition~\ref{prop:projected_feasibility_energy} gives
\begin{align}
\E\left[\int_0^1\norm{\bar u_t^\pi(x_t)}^2\dd t\right]
= \int_0^1\int_{\R[d]}\rho_t^\pi(x)\norm{\bar u_t^\pi(x)}^2 \dd x \ \dd t <+\infty.  \nn
\end{align}
Hence, \(\bar u^\pi\in\mathcal U(\rho_0,\rho_1)\).
\end{proof}

Consequently, Problem \ref{prob:density-control} is feasible, and the projected construction gives an explicit finite-energy admissible feedback.

\subsection{Mean-Field Extension}

The lifted construction also provides a natural way to formulate the mean-field Schr\"{o}dinger bridge problem~\cite{ref:GR:AP:PS:MF-SB} with Gaussian-mixture endpoints.
To this end, consider the linear McKean--Vlasov dynamics
\[\dd x_t=A_t x_t\,\dd t+\bar A_t\bar x_t\,\dd t+B_tu_t(x_t)\,\dd t+D_t\,\dd w_t,\quad \text{with }\bar x_t\Let \E[x_t],\]
with the same prescribed Gaussian-mixture boundary distributions \(\rho_0\) and \(\rho_1\).
Let
\begin{align}
\hat x_0\Let \sum_{i\in \indexsource}\alpha_i^0m_i^0,\quad \hat x_1\Let \sum_{j\in \indexsink}\alpha_j^1m_j^1, \nn 
\end{align}
denote the endpoint means. 
In the unconstrained case, the mean-field term can be separated by writing \(x_t=\bar x_t+\tilde x_t,\) and \(u_t=\bar u_t+\tilde u_t,\) where \(\bar x_t \Let \E[x_t]\), \(\bar u_t \Let \E[u_t(x_t)]\), and \(\E[\tilde x_t]=\E[\tilde u_t]=0\). 
This yields the deterministic mean-steering problem
\[
\dot{\bar x}_t=(A_t+\bar A_t)\bar x_t+B_t\bar u_t\qquad \,\bar x_0=\hat x_0,\,\,\bar x_1=\hat x_1, 
\]
together with the centered linear Schr\"{o}dinger bridge
\begin{align} \label{eq:PT100}
\dd \tilde x_t=A_t\tilde x_t\,\dd t+B_t\tilde u_t(\tilde x_t)\,\dd t+D_t\,\dd w_t,
\end{align}
whose boundary distributions are the shifted Gaussian mixtures
\[
\tilde\rho_0 \Let \sum_{i\in I}\alpha_i^0\mathcal N(m_i^0-\hat x_0,\Sigma_i^0),\qquad \tilde\rho_1 \Let \sum_{j\in J}\alpha_j^1\mathcal N(m_j^1-\hat x_1,\Sigma_j^1).
\]
The lifted construction can then be applied directly to this centered problem. Namely, for each source--target component pair \((i,j)\), one solves the Gaussian Schr\"{o}dinger bridge, equivalently, the linear covariance-steering problem,
\[\mathcal N(m_i^0-\hat x_0,\Sigma_i^0)\longrightarrow \mathcal N(m_j^1-\hat x_1,\Sigma_j^1),\]
under the centered linear dynamics~\eqref{eq:PT100}.
Let the resulting pairwise law, density, drift, and kinetic cost be denoted by \(\tilde \probmeas^{ij}\), \(\tilde\rho_t^{ij}\), \(\tilde u_t^{ij}\), and \(C_{ij}\), respectively. 
Given a component coupling \(\pi\in\Pi(\alpha^0,\alpha^1)\), define the lifted centered law \(\bar{\probmeas}^{\pi}_{\mathsf{MF}}=\sum_{i\in \indexsource}\sum_{j\in \indexsink}\pi_{ij}\,\delta_{(i,j)}\otimes \tilde \probmeas^{ij}.\) 
Applying the same posterior-averaging construction used in \eqref{eq:proj:density}--\eqref{eq:projected_drift}, now to the centered variables, define the quantities
\begin{align}
\tilde\rho_t^\pi(z)\Let \sum_{i\in \indexsource}\sum_{j\in \indexsink}\pi_{ij}\tilde\rho_t^{ij}(z),\,\, 
\tilde u_t^\pi(z)\Let\sum_{i\in \indexsource}\sum_{j\in \indexsink}\tilde\gamma_{ij}(t,z)\tilde u_t^{ij}(z),\,\,
\tilde\gamma_{ij}(t,z)\Let \frac{\pi_{ij}\tilde\rho_t^{ij}(z)}{\tilde\rho_t^\pi(z)}. \nn
\end{align} 
The corresponding mean-field feedback in the original coordinates is then
\(u_t^\pi(x)=\bar u_t+\tilde u_t^\pi(x-\bar x_t),\) and the induced density is \(\rho_t^\pi(x)=\tilde\rho_t^\pi(x-\bar x_t).\) 
Consequently, the unconstrained mean-field problem can be viewed as the composition of a deterministic mean-steering problem and a lifted centered Gaussian-mixture Schr\"{o}dinger bridge. 
In this sense, the mixture-policy construction used for unconstrained mean-field Schr\"{o}dinger bridges with Gaussian-mixture endpoints is precisely the projected Markov feedback associated with a component-labeled lifted path law.


\subsection{Hidden-label memory and Markovian projection}

In this section, we distinguish the projected path law \(\optmeas^{\pi}\) in \eqref{eq:proj:path:law} and the Markov law generated by the posterior-averaged feedback \(\bar u^\pi\) in \ref{eq:projected_drift}. 
The former is obtained by forgetting the auxiliary label in the lifted law, whereas the latter is obtained by driving the Brownian diffusion with the state-feedback drift \(\bar u^\pi\). 
We denote by \(\widehat{\optmeas}^\pi \in \polish(\Omega)\) the path law induced by
\begin{align}
\dd x_t=\bar u_t^\pi(x_t)\,\dd t+\sqrt{\eps}\,\dd w_t,\qquad x_0\sim\rho_0. \nn
\end{align}
By Corollary \ref{cor:projected_drift_admissible}, this law is well defined, has finite kinetic energy, and satisfies \(\operatorname{Law}_{\widehat \optmeas^\pi}(x_t)=\rho_t^\pi(x)\,\dd x\) for \(t\in \lcrc{0}{1}\). 
On the other hand, \(\optmeas^\pi\) is generally a mixture of bridge laws with a hidden source--target label fixed for the whole trajectory. 
Therefore, even though each \(\probmeas^{ij}\) is Markov, the mixture \(\optmeas^\pi\) need not be Markov with respect to the observed state process. To this end, let
\begin{align}  \label{eqn:Fx}
\mathcal{F}_t^x \Let \sigma(x_s:0\le s\le t), 
\end{align}
denote the observed state filtration. 
Under the lifted law \(\liftedmeas\), define the pathwise posterior label probabilities \(\Gamma_{ij}^\pi(t) \Let \liftedmeas\bigl(Z=(i,j)\mid\mathcal F_t^x\bigr).\) 
This should be distinguished from \(\gamma_{ij}^\pi\) in \eqref{eq:gamma_ij}. The former conditions on the whole observed trajectory up to time \(t\), whereas the latter conditions only on the current state. 
Define the \(\mathcal{F}_t^{x}\)-adapted \cite[Chapter I, \S 1]{ref:Protter:StochInt} hidden-label drift \(b_t^\pi \Let \sum_{i,j}\Gamma_{ij}^\pi(t)u_t^{ij}(x_t).\)
This drift is generally non-Markovian, because it may depend on the observed past through the posterior \(\Gamma_{ij}^\pi(t)\).

\begin{myframe}
\begin{proposition}[Hidden-label drift and Markovian projection]\label{prop:markov:proj}
For every \(\pi\in\Pi(\alpha^0,\alpha^1)\), under the projected path law \(\optmeas^\pi=(\proj_\Omega)_\#\liftedmeas\), there exists an \((\mathcal{F}_t^x)_{t\in \lcrc{0}{1}}\)-Brownian motion \(\widetilde{w}\) such that we have the semimartingale representation
\begin{align}
    x_t= x_0+\int_0^t b_s^\pi\,\dd s+\sqrt{\eps}\,\widetilde w_t
    \quad\text{for }\, t\in\lcrc{0}{1}. \nn
\end{align}
Moreover, for a.e. \(t\in\lcrc{0}{1}\), the posterior-averaged drift is a version of the conditional expectation
\begin{align}
    \bar u_t^\pi(x)=\E^{\optmeas^\pi}\bigl[b_t^\pi\mid x_t=x\bigr], \nn
\end{align}
for \(\rho_t^\pi\)-a.e. \(x\). Consequently, \(\bar{u}^\pi\) is the \(L^2(\dd t\otimes \optmeas^\pi)\)-orthogonal projection
of the hidden-label drift \(b^\pi\) onto the class of feedbacks depending only on the current state, i.e.,
\begin{align}\label{eq:var:charac}
\bar u^\pi\in\arg\min_v \E^{\optmeas^\pi}\left[\int_0^1\|b_t^\pi-v_t(x_t)\|^2\,\dd t\right], 
\end{align}
where the minimization is over measurable feedbacks \((t,x)\mapsto v_t(x)\) for which the expression is finite.
\end{proposition}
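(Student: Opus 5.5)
The plan has three steps: a filtering (innovations) argument for the semimartingale representation, a tower-property computation for the conditional-expectation identity, and the Pythagorean property of conditional expectation for the variational characterization.

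\emph{Step 1: the semimartingale representation.} We work on the lifted space with the enlarged filtration $\mathcal G_t \Let \sigma(Z)\vee\mathcal F_t^x$. Conditional on $Z=(i,j)$, the path law is $\probmeas^{ij}$. This is a Gaussian diffusion with affine drift $u^{ij}_t$ on $\lcro{0}{1}$, so
\[
w^{Z}_t \Let \eps^{-1/2}\Big(x_t-x_0-\int_0^t u_s^{Z}(x_s)\,\dd s\Big)
\]
is a $(\mathcal G_t)$-Brownian motion under $\liftedmeas$. One point needs care: $A_t^{ij}$ may blow up as $t\to1$, although Lemma~\ref{lem:projected_drift_regularity} claims uniform boundedness. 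In any case, $\E\int_0^1\|u^Z_s(x_s)\|^2\,\dd s=2\sum_{i,j}\pi_{ij}C_{ij}<\infty$ by \eqref{eq:pairwise_energy}, so all integrals are well defined.

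\emph{Step 1, continued: the innovations process.} By definition $b^\pi_t=\E[u^Z_t(x_t)\mid\mathcal F_t^x]$, and we choose an optional (progressively measurable) version of this conditional expectation. Define
\[
\widetilde w_t\Let \eps^{-1/2}\Big(x_t-x_0-\int_0^t b_s^\pi\,\dd s\Big).
\]
The standard innovations argument (Fujisaki--Kallianpur--Kunita, or Liptser--Shiryaev) shows that $\widetilde w$ is a continuous $(\mathcal F^x_t)$-martingale. Concretely, for $s<t$ and $F$ bounded and $\mathcal F_s^x$-measurable, we compute $\E[(\widetilde w_t-\widetilde w_s)F]$. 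We write $\widetilde w_t-\widetilde w_s=(w^Z_t-w^Z_s)+\eps^{-1/2}\int_s^t(u^Z_r-b^\pi_r)\,\dd r$. The first term has zero expectation against $F$ because $w^Z$ is a $\mathcal G$-martingale and $\mathcal F_s^x\subseteq\mathcal G_s$. For the second term, we use Fubini together with the tower property over $\mathcal F_r^x\supseteq\mathcal F_s^x$, which is justified by the $L^2$ bound. The quadratic variation of $\widetilde w$ is $t\,\mathrm I_d$, since it agrees with that of $x/\sqrt\eps$. Lévy's characterization then shows that $\widetilde w$ is an $(\mathcal F^x_t)$-Brownian motion.

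All of these objects are $\mathcal F^x$-measurable functionals of the path, so the identity transfers from $\liftedmeas$ to $\optmeas^\pi=(\proj_\Omega)_\#\liftedmeas$. I expect this step to be the main technical point: measurability and versions of $\Gamma^\pi$, and integrability near $t=1$.

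\emph{Step 2: the conditional-expectation identity.} Since $\sigma(x_t)\subseteq\mathcal F^x_t$, the tower property gives
\[
\E[b^\pi_t\mid x_t]=\E[u^Z_t(x_t)\mid x_t]=\sum_{i,j}\liftedmeas(Z=(i,j)\mid x_t)\,u^{ij}_t(x_t).
\]
By \eqref{eq:gamma_ij}, the posterior weights here are $\gamma_{ij}(t,x_t)$, so the right-hand side equals $\bar u^\pi_t(x_t)$ by \eqref{eq:projected_drift}. This holds for every $t<1$, and therefore for a.e.\ $t$ and $\rho_t^\pi$-a.e.\ $x$.

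\emph{Step 3: the variational characterization \eqref{eq:var:charac}.} Take any admissible feedback $v$. Conditioning on $x_t$, the cross term $\E\big[\langle b^\pi_t-\bar u^\pi_t(x_t),\,\bar u^\pi_t(x_t)-v_t(x_t)\rangle\big]$ vanishes for a.e.\ $t$ by Step 2. This gives the Pythagorean decomposition
\[
\E\|b^\pi_t-v_t\|^2=\E\|b^\pi_t-\bar u^\pi_t\|^2+\E\|\bar u^\pi_t-v_t\|^2 .
\]
Integrating in $t$ shows that $\bar u^\pi$ attains the minimum. The quantities involved are finite because $\E\int_0^1\|b^\pi_t\|^2\,\dd t\le\E\int_0^1\|u^Z_t\|^2\,\dd t<\infty$ by conditional Jensen, and the same bound controls $\bar u^\pi$.
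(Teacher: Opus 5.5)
Your proposal is correct and follows essentially the same route as the paper's proof. Both use the enlarged filtration $\sigma(Z)\vee\mathcal F_t^x$ and an innovations argument (Fubini plus the tower property, then L\'evy's characterization) to get the semimartingale representation, the tower property to identify $\E[b_t^\pi\mid x_t]$ with $\bar u_t^\pi(x_t)$, and the $L^2$-projection property (your explicit Pythagorean identity) for the variational characterization. Your worry that $A_t^{ij}$ might blow up near $t=1$ is unfounded, since $S_t^{ij}$ and $\Sigma_t^{ij}$ are continuous on the closed interval with $\Sigma_1^{ij}=\Sigma_j^1\succ 0$; in any case your argument never relies on it, because you work only with the energy bound.
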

\end{myframe}

\begin{proof}
Let \(\mathcal{G}_t \Let \sigma(Z)\vee \mathcal{F}_t^x\) be the smallest sigma algebra containing both \(\sigma(Z)\) and \(\mathcal{F}_t^x\), where $F_t^x$ as in
\eqref{eqn:Fx}.
%
For notational convenience, define the labeled drift \(u_t^Z(x_t)\Let u_t^{ij}(x_t)\) on the event \(\aset[]{Z=(i,j)}.\) Under \(\liftedmeas\), conditional on \(Z=(i,j)\), the canonical process has law
\(\probmeas^{ij}\). 
Hence, in the enlarged filtration \(\mathcal G_t\), the process \(M_t \Let x_t-x_0-\int_0^t u_s^Z(x_s)\,\dd s\)
is a continuous \(\liftedmeas\)-local martingale with quadratic variation \(\langle M\rangle_t=\eps t I_d.\)
This is the standard semimartingale decomposition of a diffusion in its natural enlarged filtration; see \cite[Chapter II, \S 1 and \S 6]{ref:Protter:StochInt} and \cite[Chapter 3, \S 2--4]{ref:Shreve:Karatzas}.

Define the optional projection \cite[Chapter 7]{ref:Protter:StochInt}, \cite[Chapter 10]{ref:LiptserShiryaev:Vol1}, \cite[Chapter 2] {ref:BainCrisan:Filtering} of the labeled drift onto the observed filtration by
\begin{align}
    b_t^\pi\Let \E^{\liftedmeas}\bigl[u_t^Z(x_t)\mid \mathcal F_t^x\bigr] =  \sum_{i\in\indexsource}\sum_{j\in\indexsink}
    \liftedmeas\bigl(Z=(i,j)\mid\mathcal F_t^X\bigr)u_t^{ij}(x_t). \nn
\end{align}
We next verify that \(b_t^\pi\) is indeed the drift of the projected process with respect to \((\mathcal F_t^x)\). 
Define
\begin{align}
    \widetilde M_t \Let x_t-x_0-\int_0^t b_s^\pi\,\dd s. \nn
\end{align}
Let \(0\leq s\leq t\) and \(A\in\mathcal F_s^x\). Since \(A\in\mathcal{G}_s\), the \((\mathcal{G}_t)\)-local martingale property of \(M\), after localization if necessary, yields
\begin{align}
    \E^{\liftedmeas} \left[\mathbf 1_A\left( x_t-x_s-\int_s^t u_r^Z(x_r)\,\dd r \right) \right] =0. \nn
\end{align}
Moreover, by the definition of \(b_r^\pi\) as the conditional expectation of \(u_r^Z(x_r)\) given \(\mathcal F_r^x\), and since
\(A\in\mathcal{F}_s^x\subseteq\mathcal{F}_r^x\) for \(r\in \lcrc{s}{t}\), Fubini's theorem \cite[Theorem 2.37]{ref:FolReal-99} and the tower property of expectation give
\begin{align}
    \E^{\liftedmeas}\left[\mathbf 1_A\int_s^t u_r^Z(x_r)\,\dd r
    \right] =\E^{\liftedmeas}\left[ \mathbf 1_A\int_s^t b_r^\pi\,\dd r \right]. \nn 
\end{align}
Therefore, \(\E^{\liftedmeas}\bigl[\mathbf 1_A\bigl(\widetilde M_t-\widetilde M_s\bigr)\bigr]=0.\)
Hence, \(\widetilde{M}\) is an \((\mathcal{F}_t^x)\)-local martingale under the projected law \(\optmeas^\pi\). 
Consequently,
\begin{align}
    x_t = x_0+\int_0^t b_s^\pi\,\dd s+ \widetilde{M}_t, \nn
\end{align}
is the \((\mathcal{F}_t^x)\)-semimartingale decomposition of the projected process.

Since \(\widetilde{M}\) differs from \(x\) only by a finite-variation process, subtracting the drift term does not change quadratic variation. 
Moreover, under each conditional
law \(\probmeas^{ij}\), the coordinate process has diffusion coefficient \(\sqrt{\eps}I_d\). 
Hence, under the projected law \(\optmeas^\pi\), the matrix-valued quadratic variation of the continuous local martingale \(\widetilde M\) is \(\langle\widetilde M\rangle_t=\varepsilon t I_d\) for all \(t\in[0,1]\); see \cite[Chapter II, \S 6]{ref:Protter:StochInt}.
%
%
Therefore, by the multidimensional L\'{e}vy characterization of Brownian motion, there exists an \((\mathcal F_t^{x})\)-Brownian motion \((\widetilde w_t)_{t\in\lcrc{0}{1}]}\) under
\(\optmeas^\pi\) such that \(\widetilde{M}_t=\sqrt{\eps}\,\widetilde w_t;\)
see \cite[Chapter 3, \S 3]{ref:Shreve:Karatzas} or
\cite[Chapter IV, \S 3]{ref:RevuzYor}. 
Thus,
\begin{align}\label{eq:decomp:markov}
\dd x_t=b_t^\pi\,\dd t+\sqrt{\eps}\,\dd\widetilde w_t,
\end{align}
with respect to the observed filtration \((\mathcal F_t^{x})\).

We now identify the Markov projection of the adapted hidden-label drift \(b_t^{\pi}\) appearing in \eqref{eq:decomp:markov}.
Since \(\optmeas^\pi\) is the path-space marginal of \(\liftedmeas\), conditional expectations of path-measurable quantities may be computed under \(\liftedmeas\). Using the tower property:
\begin{align}
\E^{\optmeas^\pi}\bigl[b_t^\pi\mid x_t=x\bigr]
    =\E^{\liftedmeas}
    \left[\E^{\liftedmeas}\bigl[u_t^Z(x_t)\mid\mathcal F_t^x\bigr]
        \mid x_t=x \right] =\E^{\liftedmeas}\bigl[u_t^Z(x_t)\mid x_t=x\bigr].     \nn
\end{align} 
By the definition of the posterior weights, \(\liftedmeas\bigl(Z=(i,j)\mid x_t=x\bigr)=\gamma_{ij}^\pi(t,x).\) Therefore,
\begin{align}
\E^{\optmeas^\pi}\bigl[b_t^\pi\mid x_t=x\bigr]=
    \sum_{i\in\indexsource}\sum_{j\in\indexsink}
    \gamma_{ij}^\pi(t,x)u_t^{ij}(x)=\bar u_t^\pi(x). \nn
\end{align}

Finally, the variational characterization \eqref{eq:var:charac}
follows from the conditional-expectation projection theorem in \(L^2\): among all square-integrable feedbacks \(v_t(x_t)\), the unique \(L^2\)-orthogonal projection of \(b_t^\pi\) onto the closed subspace of functions measurable with respect to \(x_t\) is \(\E^{\optmeas^\pi}\bigl[b_t^\pi\mid x_t\bigr]=\bar u_t^\pi(x_t).\)
See, for example, \cite[Ch.~1, Sec.~2]{ref:Shreve:Karatzas}, \cite{ref:AA:VSB:Control:Diff,ref:VSB:Diff:1}. This proves the result.
\end{proof}

Proposition \ref{prop:markov:proj} clarifies the status of the posterior-averaged feedback.
As emphasized before, it is \emph{not} introduced as an independent ansatz. 
Rather, it is the \emph{Markovian projection}, in the conditional-expectation sense, of the non-Markov hidden-label drift associated with \(\optmeas^\pi\). 
Thus, the state-feedback \(\bar u^\pi\) is obtained by replacing the full path-posterior label information \(\Gamma_{ij}^\pi(t)\) with the current-state posterior \(\gamma_{ij}^\pi(t,x_t)\). We now state the final result of this article.

\begin{myframe}
\begin{theorem}[Entropy decomposition through Markovization]\label{thrm:gaps:all:together}
Recall that \(\optmeas^\pi=(\proj_\Omega)_\#\liftedmeas\), and let \(\widehat \optmeas^\pi\) be the Markov path law generated by \(\bar u^\pi\). 
Then,
\begin{align}\label{thrm3:first:identity}
\eps \KL(\optmeas^\pi\|\mathsf{R}^{\rho_0})=\eps \KL(\widehat \optmeas^\pi\|\mathsf{R}^{\rho_0})+\eps \KL(\optmeas^\pi\|\widehat \optmeas^\pi). 
\end{align}
Moreover,
\begin{align}
\eps \KL(\optmeas^\pi\|\widehat \optmeas^\pi)=\frac{1}{2}\E^{\optmeas^\pi}\left[\int_0^1\|b_t^\pi-\bar u_t^\pi(x_t)\|^2\,\dd t\right].\nn
\end{align}
Combining this with the projection-gap identity from \S\ref{subsec:gap:quantification} gives the three-level decomposition
\begin{align}\label{eq:three:lev:decomp}
\eps \KL(\liftedmeas\|\bar{\mathsf{R}}^\eta)
= \eps \KL(\widehat \optmeas^\pi\|\mathsf{R}^{\rho_0}) + \eps \KL(\optmeas^\pi\|\widehat \optmeas^\pi) + \eps \gap(\pi,\eta).
\end{align}
\end{theorem}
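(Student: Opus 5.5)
The plan is to express all three path laws $\optmeas^\pi$, $\widehat\optmeas^\pi$, $\mathsf R^{\rho_0}$ as diffusions with the same noise $\sqrt{\eps}$ and the same initial law $\rho_0$, driven by three drifts: $b^\pi$ (adapted to $\mathcal F^x_t$), $\bar u^\pi(x_t)$, and $0$. Then all relative entropies follow from Girsanov. By Proposition~\ref{prop:markov:proj}, under $\optmeas^\pi$ we have $\dd x_t=b_t^\pi\,\dd t+\sqrt{\eps}\,\dd\widetilde w_t$ with $\widetilde w$ an $(\mathcal F^x_t)$-Brownian motion, and $x_0\sim\rho_0$ by Proposition~\ref{prop:endpoints_auto}. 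By Corollary~\ref{cor:projected_drift_admissible}, $\widehat\optmeas^\pi$ is the strong-solution law with drift $\bar u^\pi$ and the same initial law. Since the initial marginals agree, the time-zero contributions in the disintegration chain rule vanish, and only the Girsanov terms remain.

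First, I will establish the finite-energy facts needed to apply the Girsanov/entropy formula, in the form used for \eqref{eq:kl_energy} but for adapted (non-Markov) drifts. The drift $b^\pi$ is a conditional average of the $u^{ij}$, so Jensen gives
\[
\E^{\optmeas^\pi}\int_0^1\|b^\pi_t\|^2\,\dd t\le \sum_{i,j}\pi_{ij}\,\E^{\probmeas^{ij}}\int_0^1\|u^{ij}_t\|^2\,\dd t=2\sum_{i,j}\pi_{ij}C_{ij}<\infty .
\]
The same bound for $\bar u^\pi$ is Proposition~\ref{prop:projected_feasibility_energy}. The standard result (Föllmer/Léonard) then gives
\[
\KL(\optmeas^\pi\|\mathsf R^{\rho_0})=\tfrac{1}{2\eps}\E^{\optmeas^\pi}\!\int_0^1\|b^\pi_t\|^2\,\dd t,\qquad
\KL(\optmeas^\pi\|\widehat\optmeas^\pi)=\tfrac{1}{2\eps}\E^{\optmeas^\pi}\!\int_0^1\|b^\pi_t-\bar u^\pi_t(x_t)\|^2\,\dd t,
\]
and \eqref{eq:kl_energy} gives $\KL(\widehat\optmeas^\pi\|\mathsf R^{\rho_0})=\frac{1}{2\eps}\E^{\widehat\optmeas^\pi}\int_0^1\|\bar u^\pi_t\|^2\,\dd t$. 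The second of these identities is the second claim of the theorem.

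Next comes the Pythagorean step. I expand
\[
\|b^\pi_t\|^2=\|\bar u^\pi_t(x_t)\|^2+\|b^\pi_t-\bar u^\pi_t(x_t)\|^2+2\langle b^\pi_t-\bar u^\pi_t(x_t),\bar u^\pi_t(x_t)\rangle .
\]
The cross term has zero $\optmeas^\pi$-expectation for a.e.\ $t$, because $\bar u^\pi_t(x_t)=\E^{\optmeas^\pi}[b^\pi_t\mid x_t]$ by Proposition~\ref{prop:markov:proj} and $\bar u^\pi_t(x_t)\in L^2$. Since $\Law_{\optmeas^\pi}(x_t)=\rho^\pi_t=\Law_{\widehat\optmeas^\pi}(x_t)$, we get
\[
\E^{\optmeas^\pi}\|\bar u^\pi_t(x_t)\|^2=\int\rho^\pi_t\|\bar u^\pi_t\|^2\,\dd x=\E^{\widehat\optmeas^\pi}\|\bar u^\pi_t(x_t)\|^2 .
\]
Integrating in $t$ and multiplying by $\eps$ gives \eqref{thrm3:first:identity}. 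Finally, substituting \eqref{thrm3:first:identity} into $\KL(\liftedmeas\|\augmentmeas)=\KL(\optmeas^\pi\|\mathsf R^{\rho_0})+\gap(\pi,\eta)$ from Theorem~\ref{thm:projection-gap-identity} and multiplying by $\eps$ yields \eqref{eq:three:lev:decomp}.

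The main obstacle is the rigorous use of the Girsanov entropy formula for $\KL(\optmeas^\pi\|\widehat\optmeas^\pi)$. The reference $\widehat\optmeas^\pi$ has a drift $\bar u^\pi$ that is only locally Lipschitz with linear growth, and $b^\pi$ is path-dependent. I would handle this by writing $\KL(\optmeas^\pi\|\widehat\optmeas^\pi)=\KL(\optmeas^\pi\|\mathsf R^{\rho_0})-\E^{\optmeas^\pi}\bigl[\log\frac{\dd\widehat\optmeas^\pi}{\dd\mathsf R^{\rho_0}}\bigr]$. The density $\frac{\dd\widehat\optmeas^\pi}{\dd\mathsf R^{\rho_0}}=\exp\bigl(\frac1\eps\int\bar u^\pi\cdot\dd x-\frac1{2\eps}\int\|\bar u^\pi\|^2\dd t\bigr)$ is a true density: Beneš's condition follows from linear growth. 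I would evaluate its log under $\optmeas^\pi$ using the semimartingale decomposition $\dd x=b^\pi\dd t+\sqrt\eps\,\dd\widetilde w$, where the stochastic integral has zero mean by the finite energy bounds. Completing the square then gives the claimed formula without invoking a separate Girsanov theorem between $\optmeas^\pi$ and $\widehat\optmeas^\pi$.
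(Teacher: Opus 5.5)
Your proposal is correct and follows essentially the same route as the paper. It uses three finite-energy Girsanov identities, supported by the Jensen bound on $b^\pi$ and the energy bound on $\bar u^\pi$. It then obtains the Pythagorean split from $\bar u^\pi_t(x_t)=\E^{\optmeas^\pi}[b^\pi_t\mid x_t]$ together with the equality of the one-time marginals $\rho^\pi_t$, and finally substitutes into the projection-gap identity.

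The only difference is a welcome refinement of one step. The paper asserts $\optmeas^\pi\ll\widehat\optmeas^\pi$ and applies the relative-entropy formula between the two diffusions directly. You instead route through $\mathsf R^{\rho_0}$ using the explicit, Bene\v{s}-justified, strictly positive density $\dd\widehat\optmeas^\pi/\dd\mathsf R^{\rho_0}$, which makes that absolute continuity, and the resulting formula for $\KL(\optmeas^\pi\|\widehat\optmeas^\pi)$, explicit.
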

\end{myframe}

\begin{proof}
We first state the absolute continuity and integrability facts needed for the applications of Girsanov's theorem \cite[Chapter 3, \S 3.5]{ref:Shreve:Karatzas}. 
Since \(\optmeas^\pi=\sum_{i\in \indexsource}\sum_{j\in \indexsink}\pi_{ij}\probmeas^{ij}\) and \(\mathsf{R}^{\rho_0}=\sum_{i\in \indexsource}\alpha_i^0\mathsf{R}^i,\)
and \(\alpha_i^0>0\) for all \(i\in \indexsource\), every \(\mathsf R^{\rho_0}\)-null set is an \(\mathsf R^i\)-null set for each \(i\in \indexsource\). Since \(\probmeas^{ij}\ll \mathsf{R}^i\), it follows that \(\optmeas^\pi\ll \mathsf{R}^{\rho_0}.\) 
Furthermore, under the lifted law \(\liftedmeas\), conditioned on \(Z=(i,j)\), the path law is \(\mathsf P^{ij}\).
Hence,
\begin{align}
\E^{\liftedmeas}\left[\int_0^1\|u_t^Z(x_t)\|^2\,\dd t\right]=\sum_{i\in \indexsource}\sum_{j\in \indexsink}\pi_{ij}
\E^{\mathsf P^{ij}}\left[\int_0^1\|u_t^{ij}(x_t)\|^2\,\dd t\right]
=2\sum_{i\in \indexsource}\sum_{j\in \indexsink}\pi_{ij}C_{ij}<+\infty. 
\end{align}
By Proposition~\ref{prop:markov:proj}, \(b_t^\pi=\E^{\liftedmeas}\left[u_t^Z(x_t)\mid\mathcal{F}_t^x\right].\) Therefore, Jensen's inequality gives
\begin{align}
\E^{\optmeas^\pi}\left[\int_0^1\|b_t^\pi\|^2\,\dd t\right]\le \E^{\liftedmeas}\left[\int_0^1\|u_t^Z(x_t)\|^2\,\dd t\right]<+\infty. 
\end{align}
Thus, \(\optmeas^\pi\) has adapted drift \(b_t^\pi\) relative to the Brownian reference \(\mathsf R^{\rho_0}\), with finite energy. 
From the finite-energy form of Girsanov's theorem, we have that
\begin{align}
\eps \KL(\optmeas^\pi\|\mathsf{R}^{\rho_0})=\frac{1}{2}\E^{\optmeas^\pi}\left[\int_0^1\|b_t^\pi\|^2\,\dd t\right]. 
\end{align}

Similarly, since \(\widehat \optmeas^\pi\) is the law of the Markov diffusion \(\dd x_t=\bar u_t^\pi(x_t)\,\dd t+\sqrt{\eps}\,\dd w_t\) with \(x_0\sim\rho_0,\) and the corresponding Markov feedback has finite quadratic energy, another application of the finite-energy form of Girsanov's theorem gives
\begin{align}\label{eq:energy:bound:1}
\eps \KL(\widehat \optmeas^\pi\|\mathsf{R}^{\rho_0})=\frac{1}{2}\E^{\widehat \optmeas^\pi}\left[\int_0^1\|\bar u_t^\pi(x_t)\|^2\,\dd t\right]. 
\end{align}
Since both \(\widehat \optmeas^\pi\) and \(\optmeas^\pi\) have one-time marginals \(\rho_t^\pi\), we obtain
\begin{align}\label{eq:energy:bound:2}
\eps \KL(\widehat \optmeas^\pi\|\mathsf{R}^{\rho_0})
&=\frac{1}{2}\E^{\widehat \optmeas^\pi}\left[\int_0^1\|\bar u_t^\pi(x_t)\|^2\,\dd t\right]=\int_0^1\int_{\R[d]}\frac{1}{2}\rho_t^\pi(x)\|\bar u_t^\pi(x)\|^2\,\dd x\,\dd t\nn\\
&=\frac{1}{2}\E^{\optmeas^\pi}\left[\int_0^1\|\bar u_t^\pi(x_t)\|^2\,\dd t\right].
\end{align}
We now compare \(\optmeas^\pi\) and \(\widehat{\optmeas}^\pi\). 
Under \(\optmeas^\pi\), the adapted drift is \(b_t^\pi\), while under \(\widehat \optmeas^\pi\), the adapted drift is \(\bar u_t^\pi(x_t)\). 
The two laws have the same initial law and the same nondegenerate diffusion coefficient. Moreover, the energy identities \eqref{eq:energy:bound:1} and \eqref{eq:energy:bound:2} imply 
\begin{align}\label{eq:energy:bound:3}
\E^{\optmeas^\pi}\left[\int_0^1\|b_t^\pi-\bar u_t^\pi(x_t)\|^2\,\dd t\right]<+\infty, 
\end{align}
because \(\|b_t^\pi-\bar u_t^\pi(x_t)\|^2\le 2\|b_t^\pi\|^2+2\|\bar u_t^\pi(x_t)\|^2,\) and the two terms on the right-hand side are integrable under \(\optmeas^\pi\) by \eqref{eq:energy:bound:1} and \eqref{eq:energy:bound:2}, respectively. 
The finite-energy estimate \eqref{eq:energy:bound:3} implies, by Girsanov's theorem, that \(\optmeas^\pi \ll \widehat{\optmeas}^\pi\).
Therefore, the standard finite-energy
relative-entropy formula~\cite[Chapter 3, Section 5]{ref:Shreve:Karatzas}
for two diffusion laws with the same initial law and the same nondegenerate diffusion coefficient gives
\begin{align}
    \eps \KL(\optmeas^\pi\|\widehat \optmeas^\pi)=\frac{1}{2}\E^{\optmeas^\pi}\left[\int_0^1\|b_t^\pi-\bar u_t^\pi(x_t)\|^2\,\dd t\right]. 
\end{align}

Finally, we show the first decomposition \eqref{thrm3:first:identity}. 
By Proposition \ref{prop:markov:proj}, \(\bar u_t^\pi(x_t)=\E^{\optmeas^\pi}\bigl[b_t^\pi\mid x_t\bigr].\) Hence, the orthogonality property of conditional expectation gives, for a.e. \(t\in\lcrc{0}{1}\),
\begin{align}
\E^{\optmeas^\pi}\left[\bigl(b_t^\pi-\bar u_t^\pi(x_t)\bigr)\cdot \bar u_t^\pi(x_t)\right]=0. 
\end{align}
Consequently, we have \(\E^{\optmeas^\pi}\|b_t^\pi\|^2=\E^{\optmeas^\pi}\|\bar u_t^\pi(x_t)\|^2+\E^{\optmeas^\pi}\|b_t^\pi-\bar u_t^\pi(x_t)\|^2,\) for a.e. \(t\in \lcrc{0}{1}\). 
Integrating with respect to time and using the three relative-entropy identities \eqref{eq:energy:bound:1}--\eqref{eq:energy:bound:3}
gives
\begin{align}\label{eq:markov:1}
\eps \KL(\optmeas^\pi\|\mathsf{R}^{\rho_0})=\eps \KL(\widehat \optmeas^\pi\|\mathsf{R}^{\rho_0})+\eps \KL(\optmeas^\pi\|\widehat \optmeas^\pi).
\end{align}
Finally, the projection-gap identity from \S\ref{subsec:gap:quantification} gives
\begin{align}\label{eq:markov:2}
\eps \KL(\liftedmeas\|\bar{\mathsf R}^\eta)=\eps \KL(\optmeas^\pi\|\mathsf R^{\rho_0})+\eps\gap(\pi,\eta).
\end{align}
Using the Markovization identity \eqref{eq:markov:1} into the scaled projection-gap identity \eqref{eq:markov:2} 
yields the desired result \eqref{eq:three:lev:decomp}.
\end{proof}

\begin{remark}[When does the Markovization loss vanish?]
The term \(\KL(\optmeas^\pi\|\widehat \optmeas^\pi)\) vanishes if and only if \(\optmeas^\pi=\widehat \optmeas^\pi\). 
By the Girsanov representation above, this is equivalent to \(b_t^\pi=\bar u_t^\pi(x_t)\) for \(\dd t\otimes \optmeas^\pi\)-a.e. \((t,\omega)\). 
In other words, the hidden-label drift must already be determined by the current state, i.e., the additional label information contained in the observed past path must not improve the prediction of the current drift beyond what is already known from \(x_t\). 
The condition \(b_t^\pi=\bar u_t^\pi(x_t)\) for \(\dd t\otimes \optmeas^\pi\)-a.e. \((t,\omega)\), 
 fails, in general, because the past trajectory may contain information about which component pair \((i,j)\) was selected, and this information is not usually contained in the current state alone. 
Thus, \(\optmeas^\pi\) is generally non-Markov, whereas \(\widehat \optmeas^\pi\) is Markov by construction. 
The condition can, nevertheless, hold in useful cases. 
For example, if all active pairwise bridges have the same feedback drift, say \(x \mapsto u_t^{ij}(x) \Let u_t^\circ(x)\) for all \((i,j)\in\supp\pi,\) then both the hidden-label drift \(b_t^\pi\) and the posterior-averaged drift \(\bar u_t^\pi(x_t)\) reduce to \(u_t^\circ(x_t)\), independently of the posterior label probabilities, and hence \(\KL(\optmeas^\pi\|\widehat\optmeas^\pi)=0\). 
This situation includes, for instance, a common open-loop feedforward steering law \(x \mapsto u_t^{ij}(x) \Let v_t\) for all active pairs, identical affine covariance-steering feedbacks \(x \mapsto u_t^{ij}(x) \Let A_t x+c_t\), a rigid translation of all active Gaussian components by the same displacement vector, or a symmetric multi-agent formation-control setting in which several label assignments induce the same instantaneous averaged velocity field.
\end{remark}

%% file: sections/4-Numerics.tex
\section{Numerical experiments}\label{sec:numexp}


In this section, we present several numerical examples to illustrate that the lifted formulation helps in three concrete ways: \highlight{(a)} it reduces the continuous multimodal steering problem to closed-form Gaussian component bridges together with the finite-dimensional coupling problem \eqref{eq:pi_opt}; \highlight{(b)} it produces an explicit projected Markov feedback through \eqref{eq:proj:density}--\eqref{eq:projected_drift}; \highlight{(c)} it exposes the component-level assignment structure through the optimized coupling \(\pi^\star\), which is not directly visible in a grid-based unlabeled Schr\"{o}dinger bridge computation. The lifting gives the following structured computational recipe:\footnote{All numerical simulations were performed in MATLAB R2022a on the first author’s 2021 MacBook Pro with an Apple M1 processor.}
\begin{myframe}
\vspace{-2mm}
\[
 \bigl(p_i^0,p_j^1\bigr) \mapsto 
\bigl(\probmeas^{ij},C_{ij}\bigr) \implies 
 C_{ij} \mapsto
\pi^\star \implies
 \bigl(\pi^\star,\probmeas^{ij}\bigr) \mapsto 
\bigl(\rho_t^{\pi^\star}(\cdot),\bar u_t^{\pi^\star}(\cdot)\bigr).
\]    
\end{myframe}


\subsection{A one-dimensional mixture-to-mixture example}
\label{subsec:numerics-1d}

\input{sections/NumExp1}


\subsection{Two-dimensional examples}
\label{subsec:numerics-2d}

We next consider two-dimensional Gaussian-mixture examples. We provide three illustrations. 
First, an example where both endpoint distributions are mixtures with three components. 
We will see that the optimized coupling \(\pi^\star \in \Pi(\alpha^0,\alpha^1)\) is nontrivial. 
That is, it has to decide how much mass from each source component goes to each target component, balancing pairwise costs \(C_{ij}\) against the entropy term.
The second example, 
has only one source component. 
Thus, it 
verifies our developments with intuitive numerical visualizations. 
The final example 
illustrates the developments through a shape-transformation example.

\input{sections/NumExp2}

%% file: sections/NumExp1.tex
We first consider a one-dimensional example in order to illustrate the lifted construction, the optimized component coupling, and the projected Markov drift. We take \(\eps=0.35\) and prescribe the initial and terminal Gaussian mixtures
\[
    \rho_0 \Let 0.65\,\mathcal N(-3,0.45^2)
    + 0.35\,\mathcal N(2,0.60^2),
\]
and
\[
    \rho_1 \Let 0.35\,\mathcal N(-1.5,0.55^2)
    + 0.65\,\mathcal N(3.5,0.45^2).
\]
Thus \(N_1=N_2=2\), with mixture weights \(\alpha^0 \Let (0.65,0.35)^\top\) and \(\alpha^1 \Let (0.35,0.65)^\top\). The endpoint densities are shown in Figure~\ref{fig:oned_endpoint_densities}. Notice that the initial distribution has more mass in its left component, while the terminal distribution has more mass in its right component. Therefore, any feasible mixture-to-mixture steering must split part of the left initial component and send it to the right terminal component. For each component pair \((i,j)\), we compute the pairwise Gaussian Schrödinger bridge using Proposition~\ref{prop:pairwise_gaussian_bridge_formulas}. The resulting kinetic-energy costs \(C_{ij}\), was found to be
\[
    [C_{ij}]_{(i,j)} \Let C = \begin{pmatrix}
    1.1545 & 21.1967\\
    6.1869 & 1.2415
    \end{pmatrix}.
\]
As expected, the diagonal assignments are energetically cheap, whereas the assignment from the left initial component to the right terminal component has a high cost, and the assignment from the right initial component to the left terminal component is also relatively expensive.

We choose the prior coupling to be \(\eta^{\rm prod}_{ij}\Let \alpha_i^0\alpha_j^1\). Solving the lifted finite-dimensional problem by Sinkhorn scaling (with 100 Sinkhorn iterations) yields
\[
    \pi^\star = \begin{pmatrix} 0.35 & 0.30\\  0 & 0.35
    \end{pmatrix},
\]
where entries below \(10^{-5}\) are displayed as zero. The row and column sums satisfy \(\sum_j\pi_{ij}^\star=\alpha_i^0,\) and \(\sum_i\pi_{ij}^\star=\alpha_j^1,\) as required. The interpretation is clear: the first initial component has mass \(0.65\), but the first terminal component requires only mass \(0.35\). 
Hence, approximately \(0.35\) units of mass are assigned from component \(1\) to component \(1\), while the remaining \(0.30\) units are assigned from component \(1\) to component \(2\). 
The second initial component is assigned almost entirely to the second terminal component. 

\begin{figure}[t]
\centering
\subfloat[Endpoint densities]{%
    \includegraphics[scale=0.3]{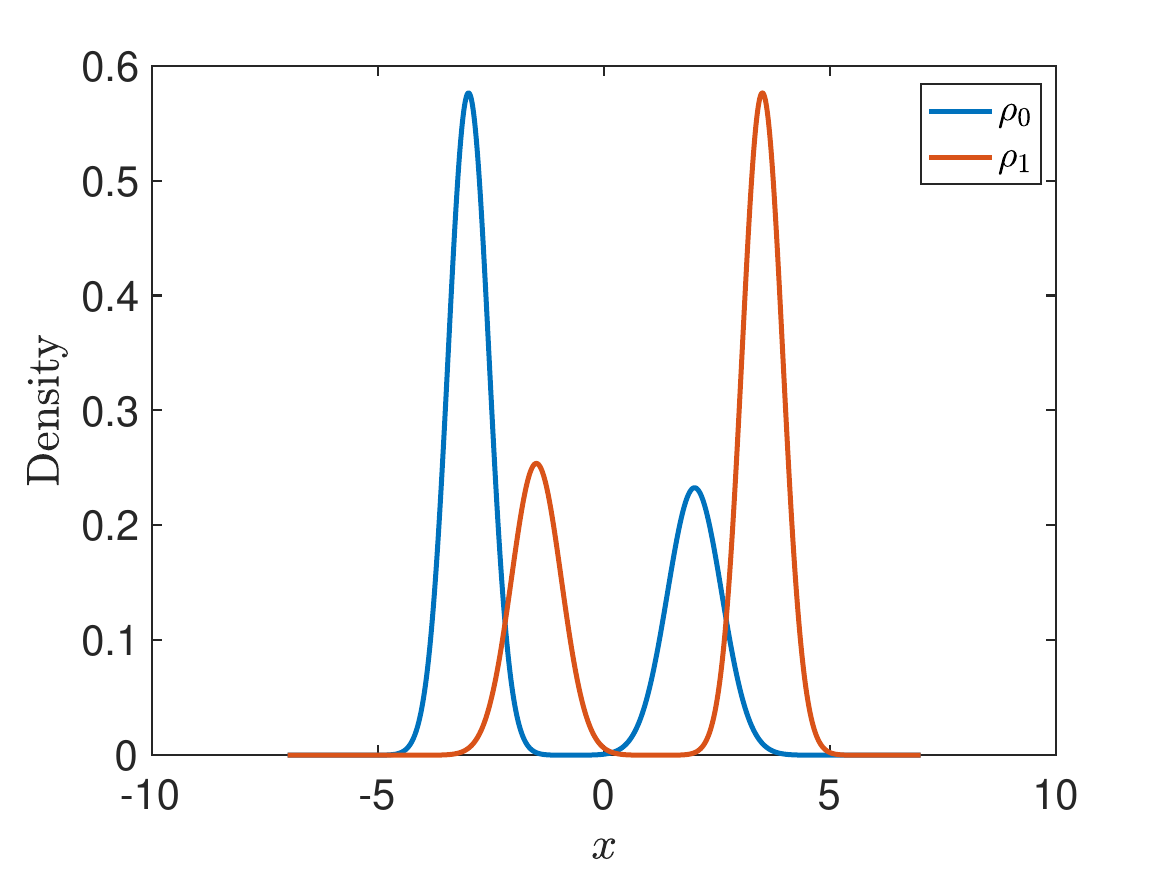}
    \label{fig:oned_endpoint_densities}}
\hspace{0.01\textwidth}
\subfloat[Sample paths]{%
    \hspace{0mm}\includegraphics[scale=0.37]{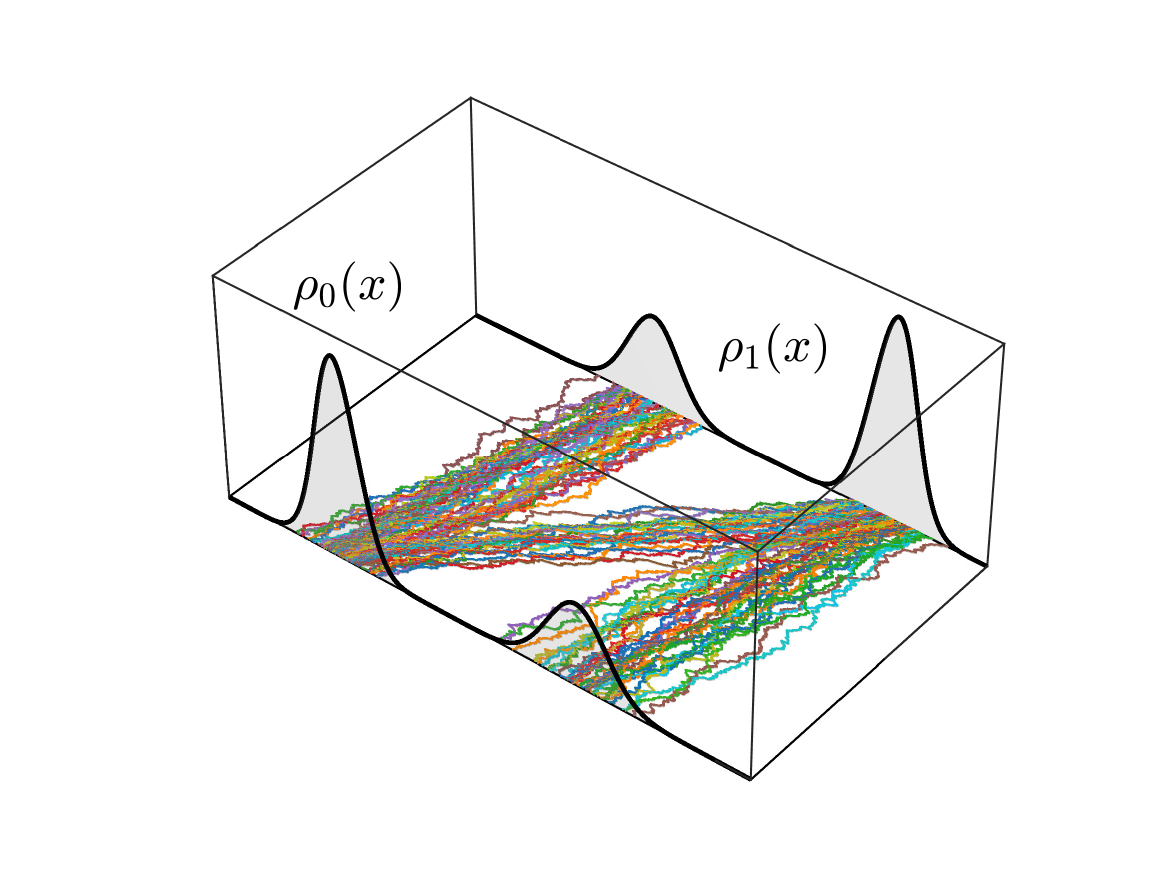}
    \label{fig:oned_sample_paths}}\\[-0.3em]
\subfloat[Projected marginal flow]{%
    \includegraphics[scale=0.3]{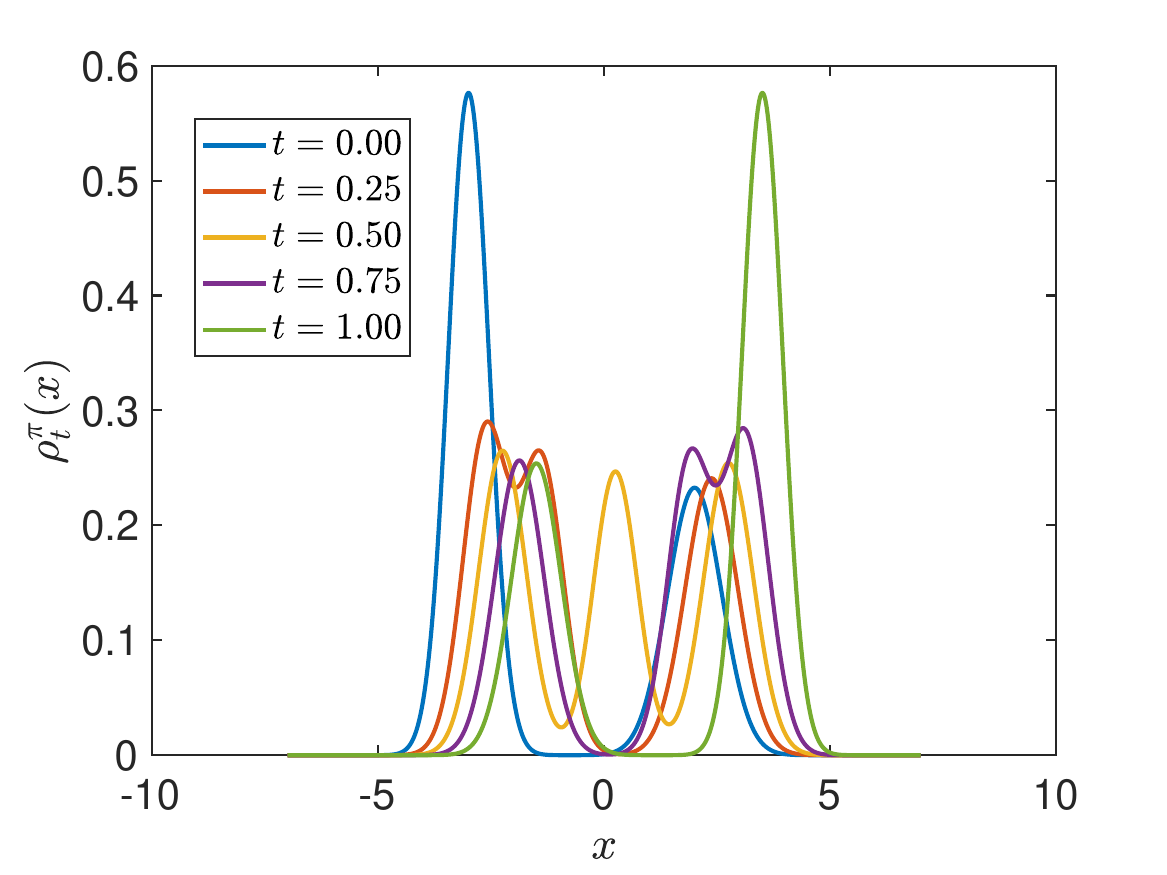}
    \label{fig:oned_density_flow}}
\hspace{0.01\textwidth}
\subfloat[Terminal validation]{%
    \includegraphics[scale=0.3]{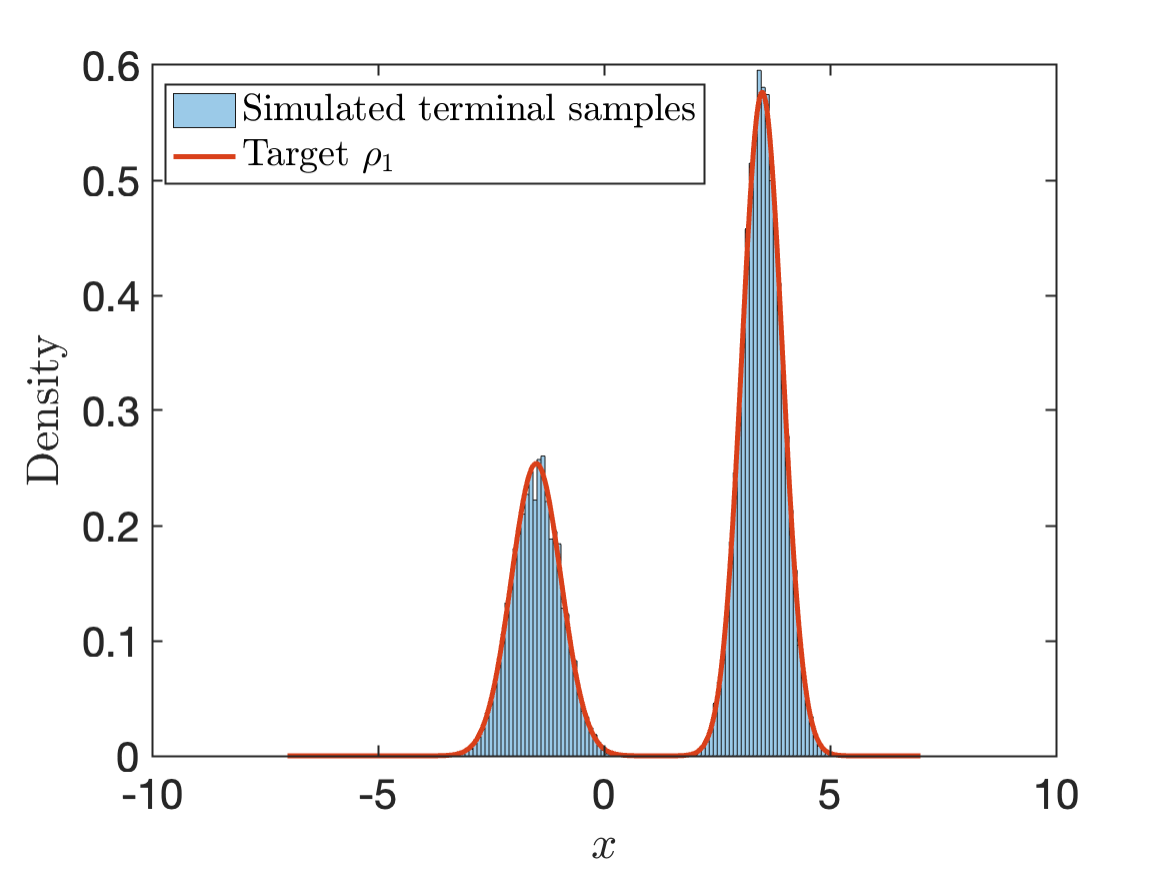}
    \label{fig:oned_terminal_validation}}
\caption{One-dimensional Gaussian-mixture Schrödinger bridge experiment.}
\label{fig:oned_results}
\end{figure}
 Given \(\pi^\star\), the lifted marginal flow is \(x \mapsto \rho_t^{\pi^\star}(x) = \sum_{i,j}\pi_{ij}^\star \rho_t^{ij}(x).\) The projected Markov drift is then defined by posterior averaging \(x \mapsto \bar u_t^{\pi^\star}(x) = \sum_{i,j}
    \gamma_{ij}^{\pi^\star}(t,x)u_t^{ij}(x),\) with \(
    \gamma_{ij}^{\pi^\star}(t,x) =  {\pi_{ij}^\star\rho_t^{ij}(x)}/
    {\rho_t^{\pi^\star}(x)}.\)
Figure~\ref{fig:oned_density_flow} shows the analytic density flow \(\rho_t^{\pi^\star}\) at several slices of times. 
The plot confirms the endpoint identities \(\rho_0^{\pi^\star}=\rho_0,\) and \(\rho_1^{\pi^\star}=\rho_1,\) which are guaranteed by Proposition \ref{prop:endpoints_auto}. 
The intermediate densities reveal the three dominant transport channels corresponding to the non-negligible entries of \(\pi^\star\): mass from component \(1\) to component \(1\), mass from component \(1\) to component \(2\), and mass from component \(2\) to component \(2\). The missing channel from component \(2\) to component \(1\) is consistent with the zero value of \(\pi_{21}^\star\).

To verify the projected drift construction, we simulated the Markov diffusion \(\dd x_t=\bar u_t^{\pi^\star}(x_t)\,\dd t+\sqrt{\eps}\,\dd w_t,\) with \(x_0\sim\rho_0\) using \(800\) Euler-Maruyama steps. The resulting sample paths are shown in 
Figure~\ref{fig:oned_sample_paths}. 
The trajectories visibly split into the same three channels identified by the optimized coupling. 
In particular, a portion of the left initial component moves to the left terminal component, another portion moves to the right terminal component, and the right initial component moves primarily to the right terminal component. 
This provides a trajectory-level visualization of the Markovized marginal flow induced by the coupling matrix \(\pi^\star\). 
Figure~\ref{fig:oned_terminal_validation} compares the terminal empirical distribution obtained by simulating the projected drift with the prescribed target density \(\rho_1\). 
The close agreement verifies numerically that the projected drift generates the desired terminal marginal. 
This is the simulation-level counterpart of the Fokker--Planck feasibility statement proved in Proposition \ref{prop:projected_feasibility_energy}.

We also compared our lifted construction with a direct numerical
approximation of the unlabeled Schr\"{o}dinger bridge for the same one-dimensional Gaussian-mixture endpoints.
The direct method discretizes the one-dimensional state space. Specifically, it first chooses a bounded computational interval \(D \Let [y_{\min},y_{\max}]\subset \R\) containing the effective support of the endpoint densities \(\rho_0\) and \(\rho_1\), and then introduces a uniform spatial grid \(\{y_k\}_{k=1}^M\subset D\).
The continuous endpoint densities \(\rho_0\) and \(\rho_1\) are then replaced by normalized probability vectors \(r_0,r_1\in\Delta_M\), obtained by evaluating/integrating the densities over the grid cells. 
With \(K_{k\ell}\) denoting the Brownian heat kernel between grid points \(y_k\) and \(y_\ell\), the method solves the finite-dimensional entropy-projection problem
\[
\min_{\Gamma\in\Pi(r_0,r_1)}\sum_{k,\ell}\Gamma_{k\ell}\log\frac{\Gamma_{k\ell}}{K_{k\ell}},
\]
using Sinkhorn scaling, following the Schr\"{o}dinger--Fortet--Sinkhorn viewpoint~\cite{ref:SOC:SB:YC}.
We found that the direct-SB energy, in the same kinetic-energy scaling used throughout the paper, is \(J_{\mathrm{direct}}=6.231.\)
or \(100\) Sinkhorn iterations, the marginal residuals were \(\norm{\gamma \mathbf 1-r_0}_1 = 4\times 10^{-13} ,\) and \(\norm{\gamma^\top \mathbf 1-r_1}_1 = 6\times 10^{-14}\), where \(r_0,r_1\) are the discretized endpoint marginals and \(\gamma\) is the computed endpoint coupling. 
This means that, among all state-space controlled diffusions steering the full mixture \(\rho_0\) to the full mixture \(\rho_1\), the minimum energy is approximately \(6.231\).

Using the projected density \(\rho_t^{\pi^\star}(\cdot)\) and the feedback \(\bar u_t^{\pi^\star}(\cdot)\), we have
\[
J_{\rm proj}\Let \int_0^1\int_{\R}\frac{1}{2}\rho_t^{\pi^\star}(x) \,| \bar{u}_t^{\pi^\star}(x)|^2
\,\dd x \, \dd t,
\]
which numerically is \(J_{\rm proj}=6.437.\) The relative gap is
\[
\frac{J_{\rm proj}-J_{\rm direct}}{J_{\rm direct}}=\frac{0.2061072060}{6.2313514143}\approx 0.03308.
\]
Thus, the advantage of our method should be understood as follows: it provides an explicit, structured, low-dimensional, interpretable, feedback-realizable construction. The direct numerical Schr\"{o}dinger bridge is a grid-based reference method. In one dimension, it is easy to implement. 
However, in dimension \(d\), if one uses \(M\) grid points per coordinate, the endpoint grid has \(M^d\) points, and the endpoint coupling matrix has roughly \(M^d\times M^d=M^{2d}\) entries. Thus, even a moderately fine two- or three-dimensional grid becomes expensive quickly. Our approach replaces this massive endpoint coupling by a component coupling of size \(N_1\times N_2\); see Table \ref{tab:metadata_ex_1} for details. 
\begin{table}[]
\centering
\caption{Numerical statistics. The computation time (in secs) is median run times over 30 repetitions.}
\begin{tblr}{l c c c}
\hline[2pt]
\SetRow{azure9}
Method  & Size & Energy & Time \\
\hline
Direct SB & \(601 \times 601 \) & 6.231 & \(4.4 \times 10^{-1}\) \\
Ours &  \(2 \times 2\) & 6.437  & \(4.4 \times 10^{-4}\) \\
\hline[2pt]
\end{tblr}
\label{tab:metadata_ex_1}
\end{table}

%% file: sections/NumExp2.tex
\subsubsection{Example 1}\label{subsub:example:1} We take \(d=2\), terminal time \(T=1\), and diffusion parameter \(\eps = 0.3.\) The initial and terminal distributions are three-component Gaussian mixtures
\[
    x \mapsto \rho_0(x) \Let \sum_{i=1}^3 \alpha_i^0\,\mathcal N(x;m_i^0,\Sigma_i^0), \quad
    x \mapsto \rho_1(x) \Let \sum_{j=1}^3 \alpha_j^1\,\mathcal N(x;m_j^1,\Sigma_j^1),
\]
with mixture weights \(\alpha^0 \Let (0.40,0.35,0.25)^\top,\alpha^1 \Let (0.25,0.45,0.30)^\top.\) The component means are \(m_1^0 \Let (-4,-2),\) \(m_2^0 \Let (-4,1.8),\) and \(m_3^0 \Let (-1,0),\) and
\(m_1^1 \Let (2.5,-2.5),m_2^1 \Let (4,0.4),\) and \(m_3^1 \Let (2.5,2.7).\) 
The covariance matrices are
\[
    \Sigma_1^0 \Let \begin{pmatrix}
    0.35 & 0.08\\
    0.08 & 0.28 \end{pmatrix},
    \qquad \Sigma_2^0 \Let \begin{pmatrix}
    0.30 & -0.06\\
    -0.06 & 0.45 \end{pmatrix},
    \qquad
    \Sigma_3^0 \Let \begin{pmatrix}
    0.48 & 0\\
    0 & 0.35 \end{pmatrix},
\]
and
\[
    \Sigma_1^1 \Let  \begin{pmatrix}
    0.40 & -0.05\\
    -0.05 & 0.30 \end{pmatrix},
    \qquad
    \Sigma_2^1 \Let  \begin{pmatrix}
    0.35 & 0.07\\
    0.07 & 0.42 \end{pmatrix},
    \qquad
    \Sigma_3^1 \Let \begin{pmatrix}
    0.32 & -0.04\\
    -0.04 & 0.36 \end{pmatrix}.
\]
For each pair \((i,j)\), we compute the pairwise Gaussian Schr\"{o}dinger bridge from the initial component \(\mathcal N(m_i^0,\Sigma_i^0)\) to the terminal component \(\mathcal N(m_j^1,\Sigma_j^1)\). The corresponding kinetic cost is 
\[
    C
    \approx
    \begin{pmatrix}
    21.31 & 34.92 & 32.24\\
    30.45 & 33.05 & 21.61\\
    9.34  & 12.66 & 9.87
    \end{pmatrix}.
\]
For this example, the third initial component has comparatively low transport cost to all terminal components, especially to the first and third terminal components. By contrast, transporting the first initial component to the second or third terminal component and the second initial component to the first or second terminal component are substantially more expensive.

\begin{figure}[h]
\centering
\subfloat{%
    \includegraphics[scale=0.24]{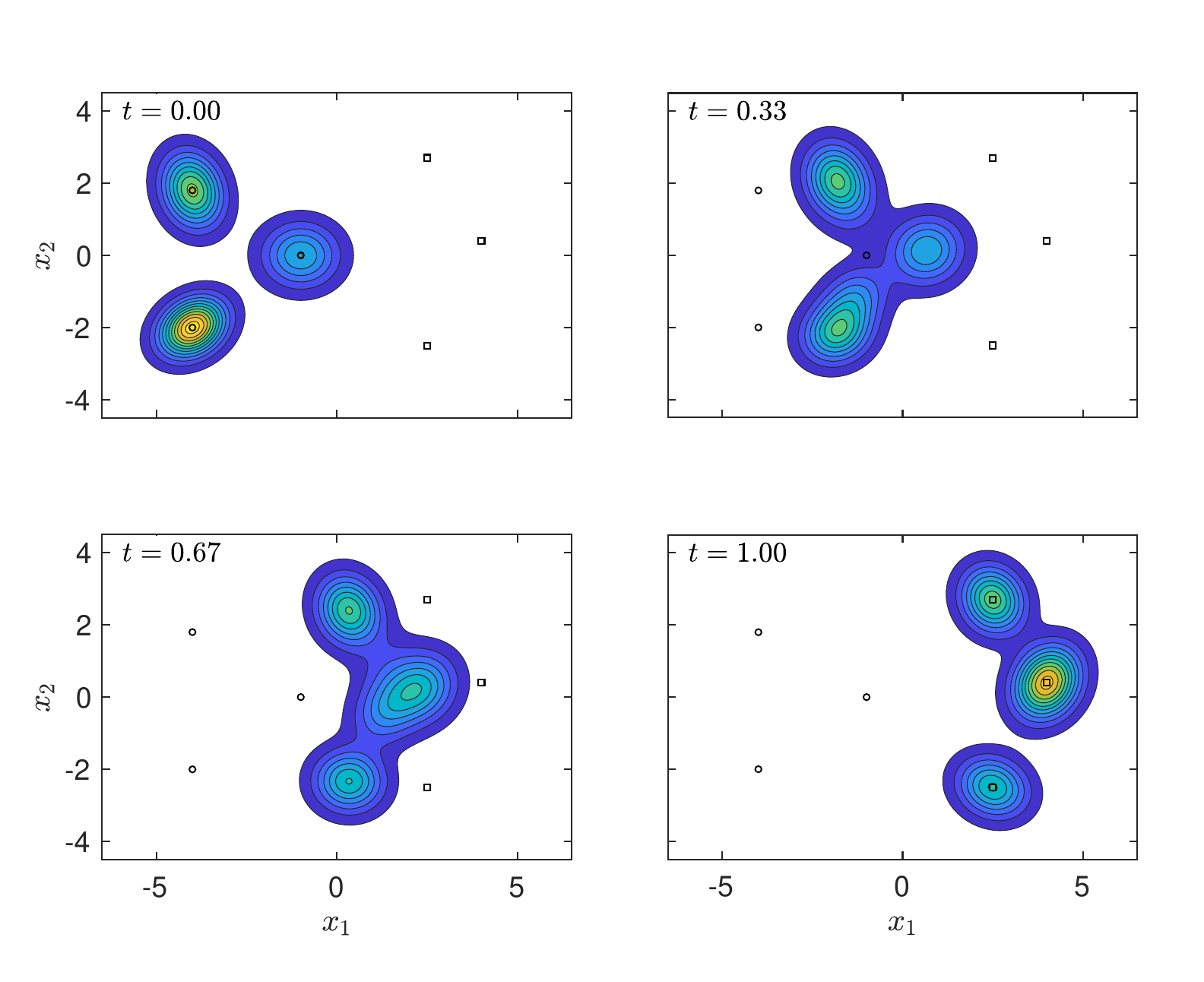}
    \label{fig:three_mode_density_flow}}
\subfloat{%
    \includegraphics[scale=0.27]{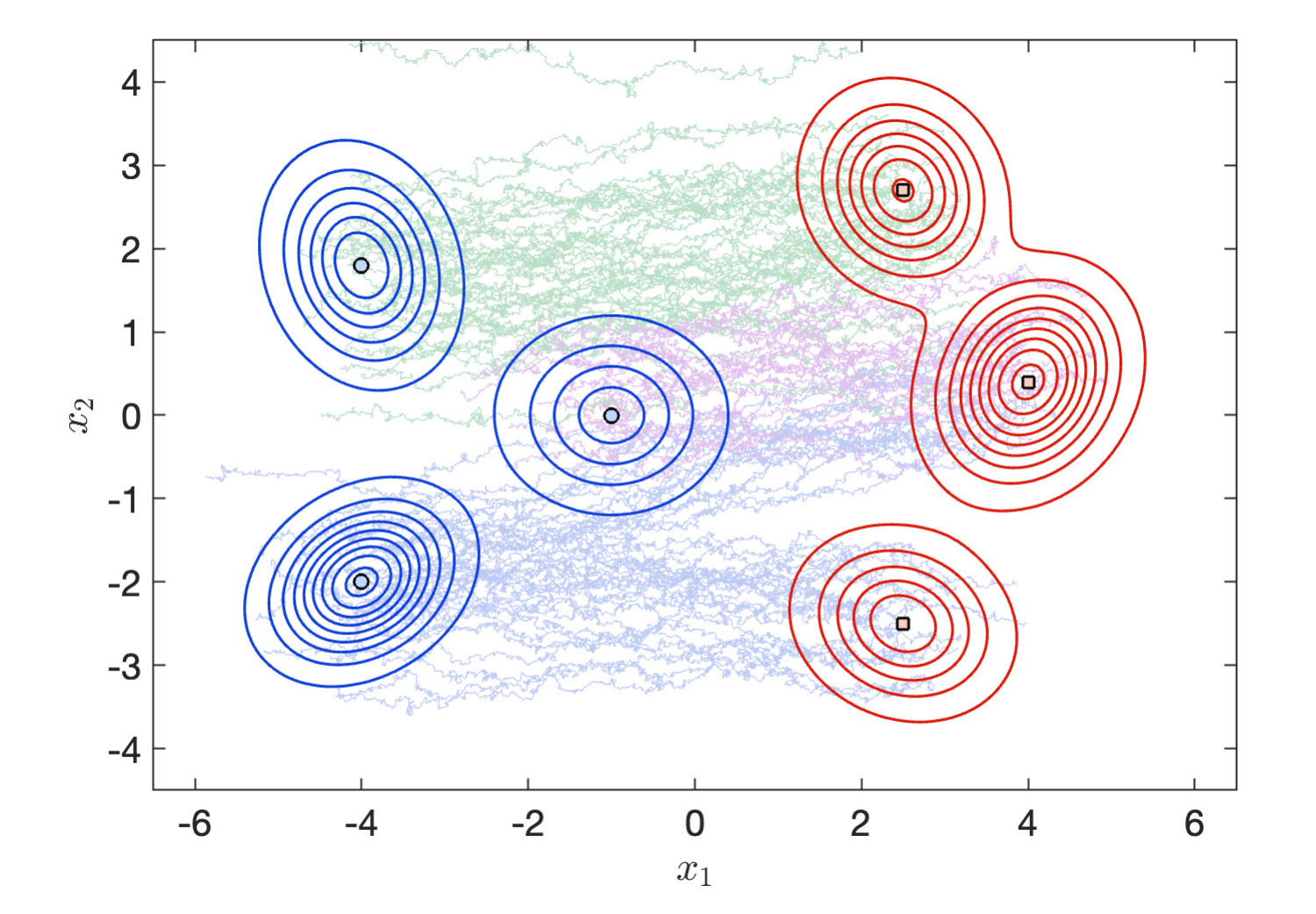}
    \label{fig:three_mode_density_sample_paths}}
\caption{The evolution of the projected density \(\rho_t^{\pi^{\star}}(\cdot)\) at several time-slices (left subfigure) and sample paths of the Markov diffusion under Euler-Maruyama steps (right subfigure).}
\label{fig:threemode_results}
\end{figure}

We choose the prior coupling to be the independent coupling \(\eta_{ij}=\alpha_i^0\alpha_j^1\) and solved the finite-dimensional lifted problem by Sinkhorn scaling to obtain \(\pi^{\star}\), as follows
\[
    \pi^\star \approx \begin{pmatrix}
    0.25 & 0.15 & 0\\
    0 & 0.05 & 0.30\\
    0 & 0.25 & 0
    \end{pmatrix},
\]
where entries below \(10^{-7}\) are displayed as zero. The row and column sums satisfy \(\sum_j\pi_{ij}^\star=\alpha_i^0,\) \(\sum_i\pi_{ij}^\star=\alpha_j^1.\) 
The transport contribution is approximately \(\sum_{i,j}\pi_{ij}^\star C_{ij}
\approx 21.87,\) and the discrete entropy term is approximately \(0.659\). 
Thus, the finite-dimensional lifted objective is approximately
\(\sum_{i,j}\pi_{ij}^\star C_{ij}+\eps\,\KL(\pi^\star\|\eta)
    \approx 22.06.\)

Given \(\pi^\star\), using the lifted marginal density \(\rho_t^{\pi^\star}\), and the projected Markov drift \(\bar u_t^{\pi^\star}\), we then simulated the Markov diffusion
\[
    \dd x_t=\bar u_t^{\pi^\star}(x_t)\,\dd t+\sqrt{\eps}\,\dd w_t, \quad  x_0\sim\rho_0.
\]
In the simulation, \(30,000\) particles were propagated using \(1,500\) Euler--Maruyama steps, and \(100\) sample paths were retained for visualization.

The right-hand subfigure in Figure~\ref{fig:threemode_results} shows sample paths generated by the projected Markov drift. The paths are colored according to their initial component, while the endpoint density contours are drawn on top. The figure illustrates the Eulerian projected-drift construction at the trajectory level. 
At the level of one-time marginals, the observed particle flow is consistent with the optimized coupling: density initially concentrated near each source component is transported toward the terminal regions favored by the large entries of \(\pi^\star\). 
The left-hand subfigure in Figure~\ref{fig:threemode_results} shows the analytic projected density \(\rho_t^{\pi^\star}\) at four times,
\(t=0,\tfrac13,\tfrac23,\) and \(t=1\). The panel at \(t=0\) recovers the prescribed initial mixture \(\rho_0\), and the panel at \(t=1\) recovers the prescribed terminal mixture \(\rho_1\). The intermediate panels show the splitting and recombination of density induced by the optimized coupling. This figure provides a direct numerical verification of the theoretical endpoint identities \(\rho_0^{\pi^\star}=\rho_0,\) and \(\rho_1^{\pi^\star}=\rho_1.\) We also report similar numerical statistics as before; see Table~\ref{tab:metadata_ex_2}.

\begin{table}[]
\centering
\caption{Numerical statistics. The computation time (in secs) is the median run times over 20 repetitions.}
\begin{tblr}{l c c c}
\hline[2pt]
\SetRow{azure9}
Method  & Size & Energy & Time \\
\hline
Direct SB & \(3111 \times 3111 \) & 21.197 & \(1.5\) \\
Ours &  \(3 \times 3\) & 21.267  & \(1.3 \times 10^{-2}\) \\
\hline[2pt]
\end{tblr}
\label{tab:metadata_ex_2}
\end{table}

\subsubsection{Example 2: An eight-mode splitting} \label{subsub:example:2} 

We next present a two-dimensional radial example designed to visualize the projected Markov drift when a single unimodal initial distribution is steered to a highly multimodal terminal distribution. 
This example illustrates how the posterior-averaged projected drift splits one initial cloud into several terminal clusters. 
We take \(d=2, T=1, \eps=0.08.\) The initial distribution is a single Gaussian centered at the origin, \(\rho_0(x)=\mathcal N(x;m_0,\Sigma_0),\) with \(m_0 \Let (0,0), \Sigma_0 \Let 0.15 I_2.\) 
The terminal distribution is an eight-component Gaussian mixture whose component means are arranged uniformly on a circle of radius \(r=5.1\), i.e., \(x \mapsto \rho_1(x) \Let \frac18\sum_{j=1}^8 \mathcal N(x;m_j^1,\Sigma_j^1),\) where
\[
    m_j^1 \Let r(\cos\theta_j,\sin\theta_j)\, \text{ and }\,
    \theta_j \Let \frac{2\pi(j-1)}{8} \quad\text{for }j=1,\dots,8.
\]
The terminal covariance matrices are anisotropic with their major axes
aligned with the radial directions. 
Specifically, if
\[
    R_j \Let \begin{pmatrix}
    \cos\theta_j & -\sin\theta_j\\
    \sin\theta_j & \cos\theta_j
    \end{pmatrix}, \quad\text{then} \quad  \Sigma_j^1 =
    R_j \begin{pmatrix}
    0.16 & 0\\
    0 & 0.10 \end{pmatrix} R_j^\top.
\]
Thus, the terminal density consists of eight localized Gaussian modes distributed around a circle. 
We carried out all other numerical steps as before and thus omitted the details. 
\begin{figure}[h]
\centering
\subfloat{%
    \includegraphics[scale=0.30]{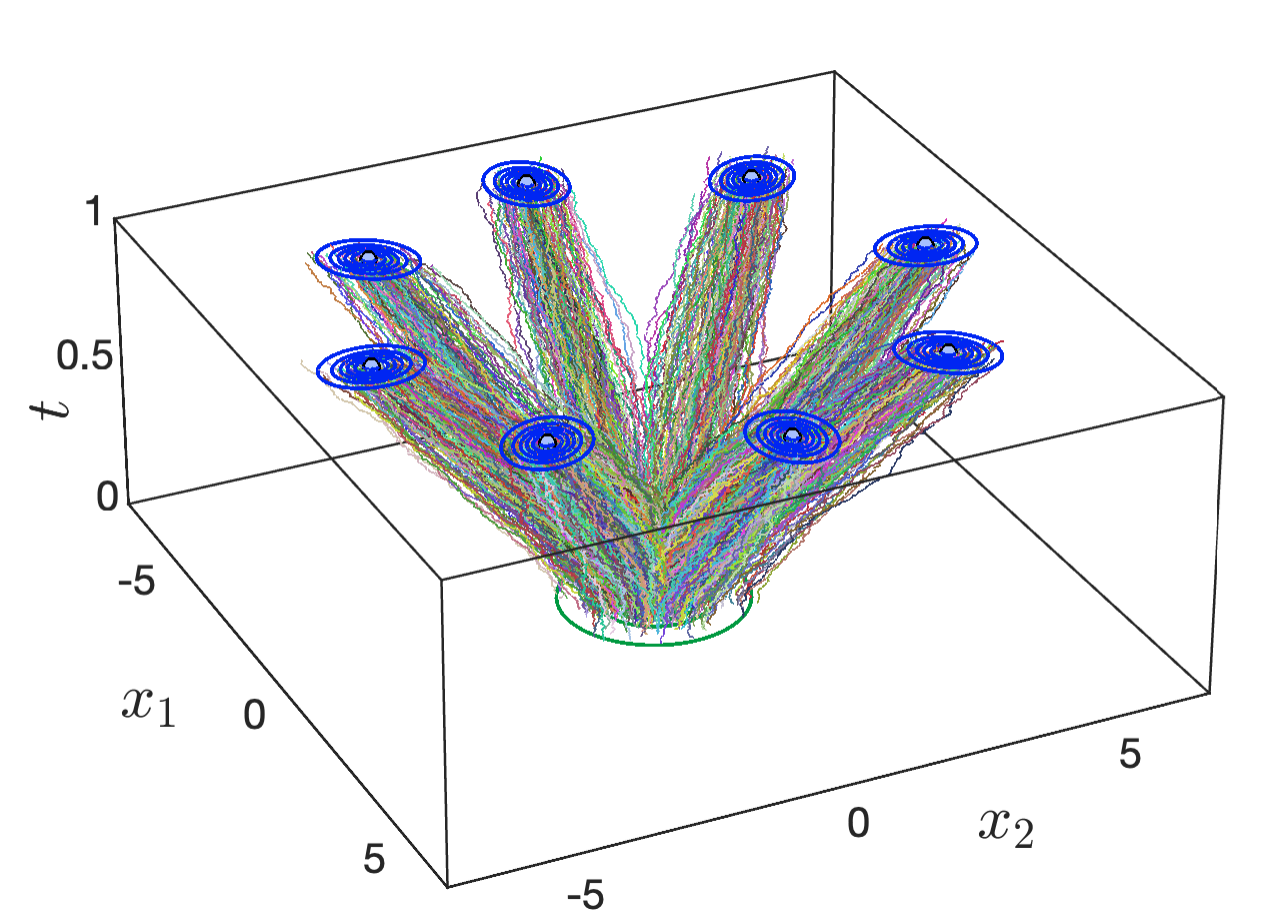}
    \label{fig:eight_mode_sample_paths}}
\hspace{0.01\textwidth}
\subfloat{%
    \includegraphics[scale=0.2]{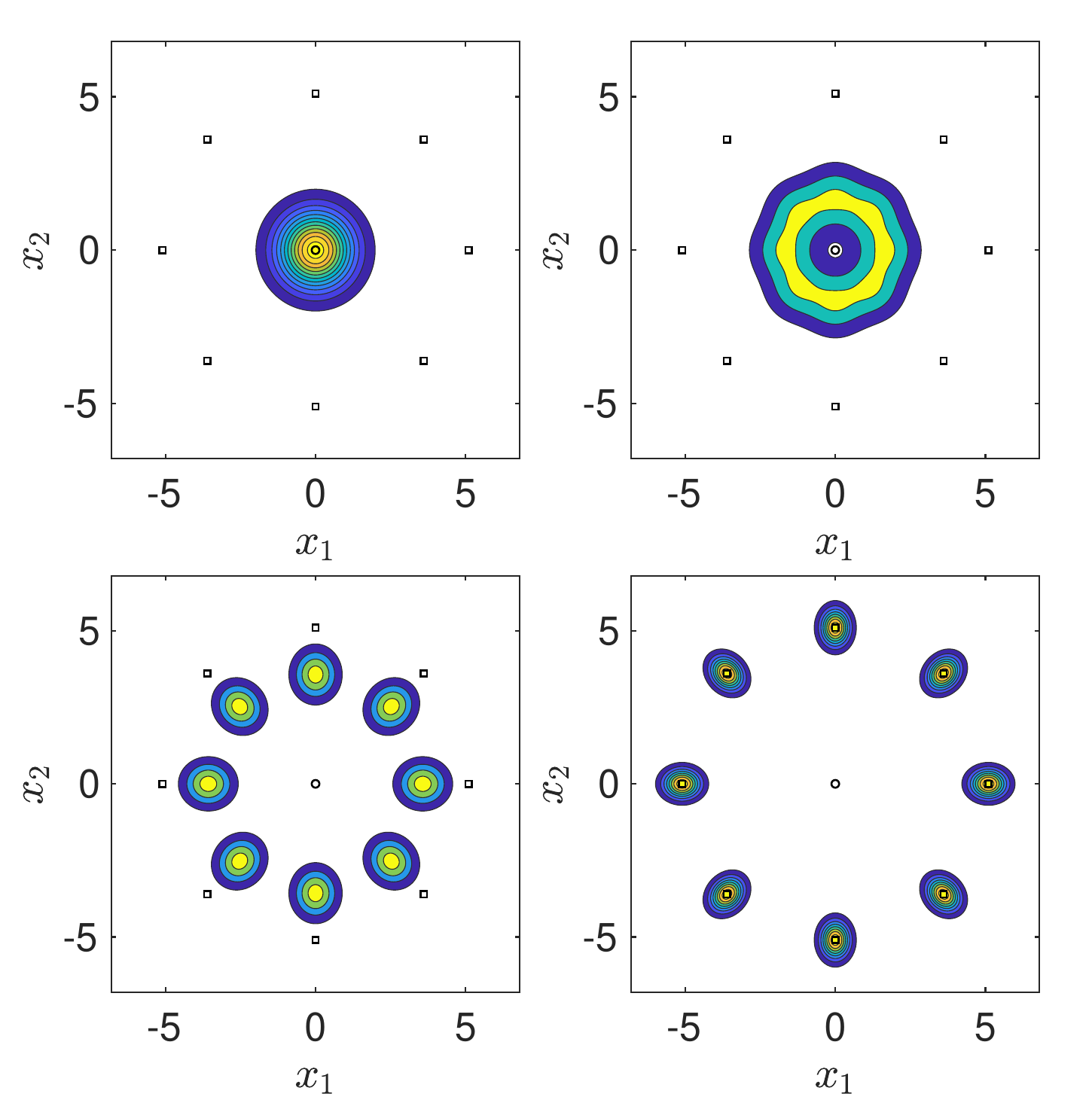}
    \label{fig:eight_mode_density_flow}}
\caption{The evolution of the sample paths of the Markov diffusion under Euler-Maruyama steps (left subfigure) and the corresponding density evolution (right subfigure) at different time-slices.}
\label{fig:eight_mode_shape_paths}
\end{figure}
The left subfigure in Figure \ref{fig:eight_mode_shape_paths} shows a three-dimensional space-time visualization in \((x_1,x_2,t)\). 
The green contour lies in the plane \(t=0\), representing the initial Gaussian. 
The blue contours lie in the plane \(t=1\), representing the eight terminal Gaussian components. 
The right subfigure in Figure~\ref{fig:eight_mode_shape_paths} shows the analytic projected density \(\rho_t^{\pi^\star}=\tfrac{1}{8}\sum_{j=1}^8 \rho_t^{1j}\) at several intermediate times. 
At \(t=0\), the density is the initial Gaussian at the origin. At \(t=1\), the density becomes the eight-component terminal mixture. The intermediate panels show how the initially unimodal density spreads and separates into multiple branches. This example demonstrates that the projected Markov drift is capable of producing a highly multimodal terminal distribution from a single unimodal initial distribution.

 \subsubsection{Example 3 (Shape matching)}\label{subsubsec:shape:matching}
\begin{figure}[h]
\centering
\subfloat{%
    \includegraphics[scale=0.17]{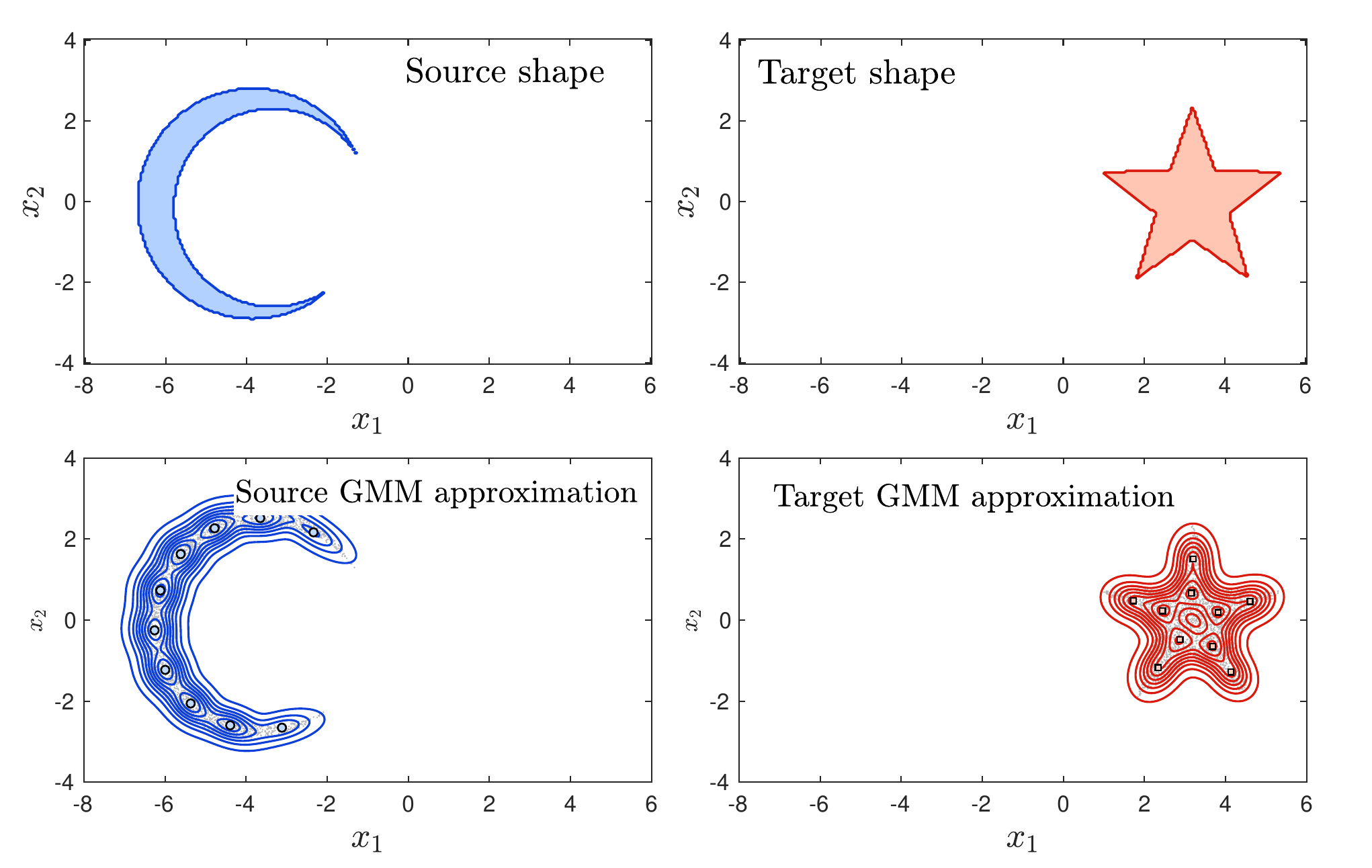}
    \label{fig:source:target:shape}}
\subfloat{%
    \hspace{-3mm}\includegraphics[scale=0.2]{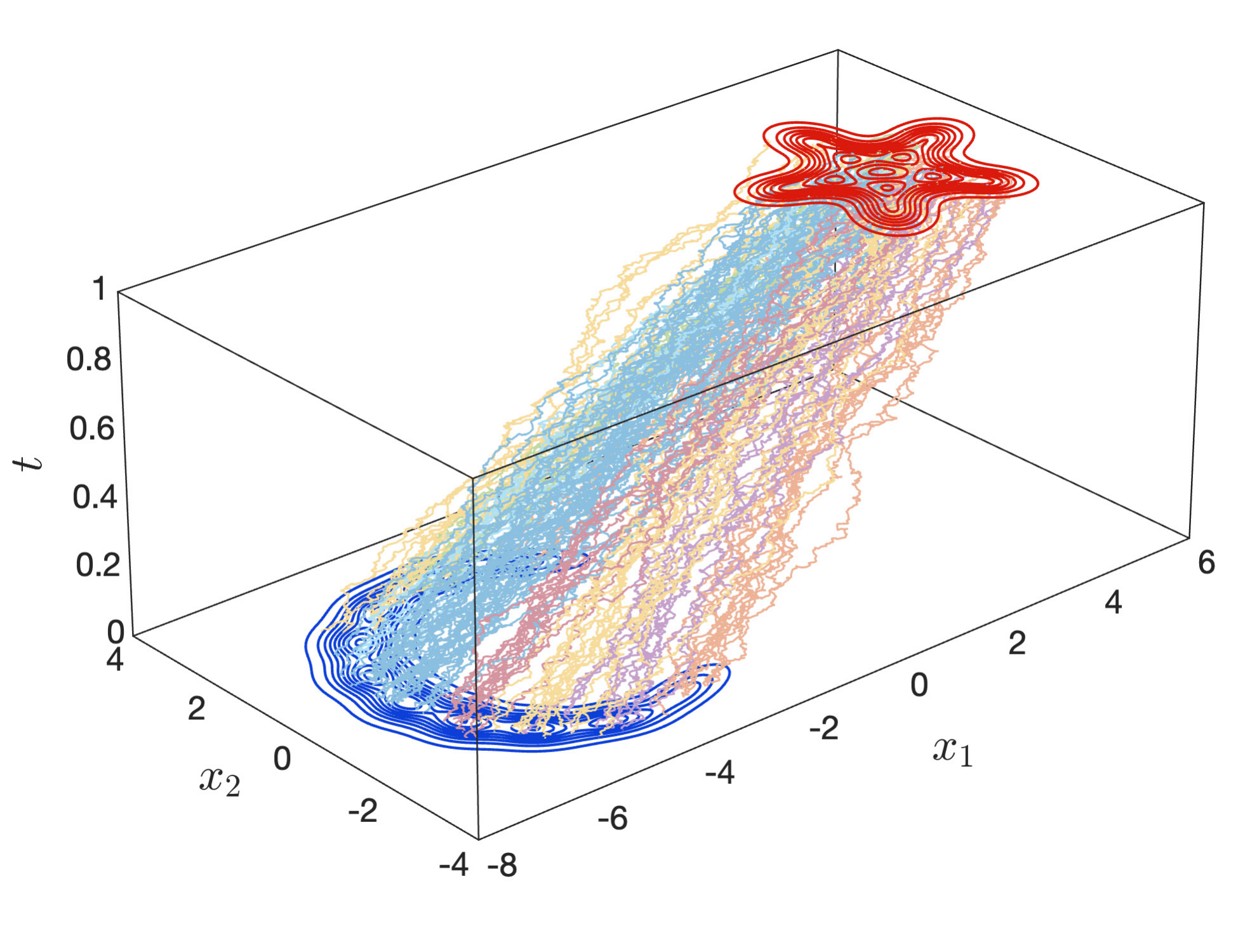}
    \label{fig:three_mode_density_sample_paths}}
\caption{The crescent source and star target shapes and their GMM approximations (left subfigure). The sample paths of the Markov diffusion under Euler-Maruyama steps (right subfigure).} 
\label{fig:shape_match_path_shape}
\end{figure}
As a final visual illustration, we consider an image-inspired density steering problem. A crescent-shaped source silhouette and a star-shaped target silhouette are first converted into point clouds, and each point cloud is approximated by a finite Gaussian mixture; see Figure \ref{fig:shape_match_path_shape}. 
More precisely, they were approximated by \(10\) Gaussian components with mixture weights as follows
\begin{align*}
\alpha^0 & = (0.13,0.05,0.10,0.07,0.09,0.13,0.12,0.12,0.04,0.10)^{\top} \nn\\ \alpha^1 &= (0.12,0.07,0.07,0.11,0.11,0.12,0.08,0.09,0.08,0.12)^\top.\nn
\end{align*}
Our lifted construction is then applied to these fitted Gaussian-mixture endpoints. The right-hand subfigure in Figure \ref{fig:shape_match_path_shape} shows sample paths generated by the projected Markov drift \(\bar u^{\pi^\star}\), illustrating the stochastic transport from the source region to the target region. 
Figure \ref{fig:shape_density} shows the corresponding analytic projected density evolution \(x \mapsto \rho_t^{\pi^\star}(x)\) which deforms the crescent-shaped density into the star-shaped density. 
\begin{figure}[h!]
\centering
    \includegraphics[scale=0.33]{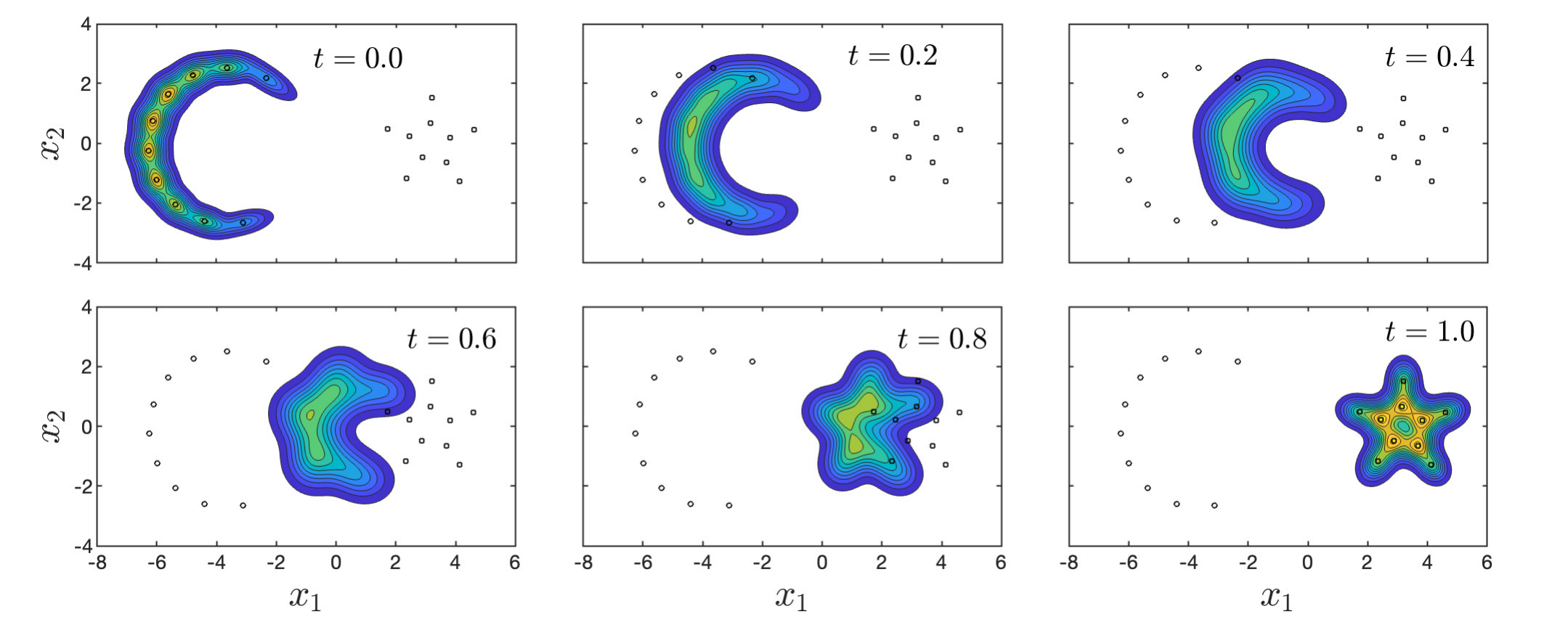}
    \label{fig:three_mode_density_sample_paths}
 \caption{The evolution of the projected density \(\rho_t^{\pi^{\star}}(\cdot)\) for the shape matching problem.}
\label{fig:shape_density}
\end{figure}

To illustrate the role of the prior component coupling \(\eta\) (as discussed in Remark \ref{rem:eta:interp}), we additionally perform a prior-sensitivity experiment for this example. We compare three admissible prior couplings \(\eta\). 
The first is the neutral product prior \(\eta^{\rm prod}\Let \alpha^0(\alpha^1)^\top\) (as employed in the previous examples).
The second is a diagonal-favoring prior, and the third is a rotated prior. Since the source and target weights are not uniform, the diagonal and rotated priors cannot be simple permutation matrices. 
Instead, we first construct two couplings \(\tilde \eta^{\rm diag},\tilde \eta^{\rm rot}\in\Pi(\alpha^0,\alpha^1)\), where \(\tilde \eta^{\rm diag}\) places as much mass as possible along the diagonal \(i=j\), while \(\tilde \eta^{\rm rot}\) places as much mass as possible along the shifted diagonal \(j=i+3\pmod{10}\). 
Any remaining mass is assigned greedily to satisfy the prescribed row and column sums. We then blend these structured couplings with the product prior to generate
\[
\eta^{\rm diag}\Let (1-\theta)\eta^{\rm prod}+\theta\tilde \eta^{\rm diag},\qquad \eta^{\rm rot}\Let (1-\theta)\eta^{\rm prod}+\theta\tilde \eta^{\rm rot},
\]
with \(\theta=0.90\). 
This blending guarantees strict positivity of all entries, while preserving the marginals \(\alpha^0\) and \(\alpha^1\).
For each choice \(\eta\in\{\eta^{\rm prod},\eta^{\rm diag},\eta^{\rm rot}\}\), we solve
\[
\pi^\star_\eta\in\argmin_{\pi\in\Pi(\alpha^0,\alpha^1)}\left\{\sum_{i,j}\pi_{ij}C_{ij}+\eps\KL(\pi\Vert\eta)\right\}.
\]
\begin{figure}
    \centering
    \includegraphics[scale=0.6]{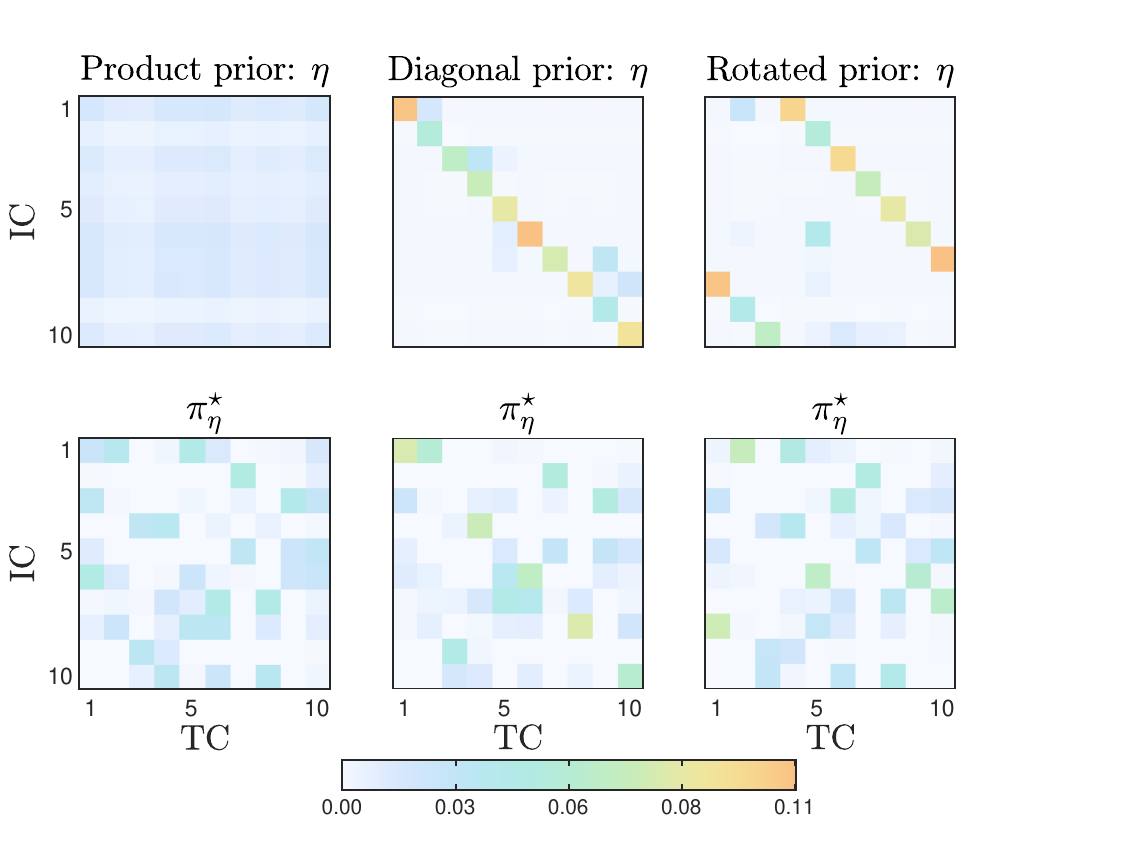}
  \caption{Effect of the prior component coupling in the shape-matching example. The top row shows the prescribed prior couplings \(\eta\), and the bottom row shows the corresponding optimized couplings \(\pi^\star_\eta\). The product prior is neutral, the diagonal-favoring prior encourages \(i\mapsto i\) assignments, and the rotated prior encourages the shifted assignment \(j=i+3\pmod{10}\). All entries represent fractions of total mass assigned from an initial component to a terminal component. Here IC denotes initial component and TC denotes terminal component.}
    \label{fig:silhouette_eta_prior_ablation}
\end{figure}
Figure~\ref{fig:silhouette_eta_prior_ablation} shows that the optimized component assignment depends visibly on the prior coupling. Consequently, as we discussed before, \(\eta\) is \emph{not merely an interpretability device}, it acts as a modeling input in the lifted assignment layer.

%% file: sections/Conclusion.tex
\section{Concluding remarks}\label{sec:diss:conc}

In this article, we developed a lifted Schr\"odinger bridge construction for Gaussian-mixture endpoints by augmenting trajectories with component labels, thereby reducing the mixture-to-mixture problem to explicit Gaussian component bridges and a finite-dimensional entropic coupling problem. 
Projecting the lifted law back to the original state space yields a feasible feedback construction that matches the prescribed endpoints, admits a certified kinetic-energy upper bound, and is numerically shown to achieve performance very close to direct grid-based Schr\"odinger bridge solvers, while being numerically faster and retaining a much more interpretable and low-dimensional component-level structure.

Several extensions of this work are natural. First, the Brownian Gaussian component bridges can be replaced by linear-system covariance-steering bridges, or, more generally, by numerically computed nonlinear-prior Schr\"odinger bridges. 
In that case, the lifted architecture remains the same, but the component quantities \((\rho_t^{ij},u_t^{ij},C_{ij})\) are computed numerically rather than in closed form. Second, state constraints can be incorporated at the component level by replacing each pairwise bridge with a constrained Schr\"odinger bridge or chance-constrained covariance-steering subproblem; the finite-dimensional assignment layer is then preserved, while the costs \(C_{ij}\) encode constrained transfer geometry. 
Third, unconstrained linear mean-field Schr\"{o}dinger bridges with Gaussian-mixture endpoints can be handled by combining deterministic mean steering with a lifted centered Gaussian-mixture bridge. Stochastic maximum principle \cite{red:SOC:VP:I,ref:SOC:VP:II,ref:adj:match:soc,ref:SOC:VP:IV}, unbalanced optimal transport \cite{ref:Chen:UOT:2,ref:UnOTDenCon:Extended:TAC}, and optimal stopping \cite{ref:OptStop:AS:SG:AP:PT,ref:AE:TTG:SB:Stop:Time} based viewpoints would also be interesting. Finally, a sharper analysis of the projection gap between the lifted and unlabeled path laws would further clarify when the lifted construction is close to the true unlabeled optimum.

\section{Acknowledgment}
During the preparation of this manuscript, the author(s) used OpenAI’s ChatGPT to improve grammar and English usage and to assist in generating colored visualizations for some plots. The author(s) take full responsibility for all remaining errors.














%% file: sections/appendix.tex
\section{Appendix}\label{appen:proofs}

\begin{proof}[Proof of Proposition \ref{prop:pairwise_gaussian_bridge_formulas}]
Let \((x_t)_{t\in\lcrc{0}{1}}\) denote the canonical process under \(\probmeas^{ij}\). 
Since \(\probmeas^{ij}\) is a Brownian reciprocal process with Gaussian endpoint law, conditional on \((x_0,x_1)\), the path is a Brownian bridge. Therefore, for every \(t\in\lcrc{0}{1}\), \(x_t=(1-t)x_0+t x_1+\sqrt{\eps}\,B_t^{\mathrm{br}},\) where \((B_t^{\mathrm{br}})_{t\in\lcrc{0}{1}}\) is a standard Brownian bridge, independent of \((x_0,x_1)\), satisfying \(\E[B_t^{\mathrm{br}}]=0,\) and \(\operatorname{Cov}(B_t^{\mathrm{br}})=t(1-t)I.\) 
Taking expectations gives
\[
\E[x_t]=(1-t)\E[x_0]+t\,\E[x_1]=(1-t)m_i^0+t\,m_j^1,
\]
which proves the formula for \(m_t^{ij}\). 
Using independence of \(B_t^{\mathrm{br}}\) from \((x_0,x_1)\) and \(\operatorname{Cov}(x_0,x_1)=\Sigma_{01}^{ij}\), we obtain \(\operatorname{Cov}(x_t)=\operatorname{Cov}\big((1-t)x_0+t x_1\big)+\eps\,\operatorname{Cov}(B_t^{\mathrm{br}}).\) 
Hence,
\[
\Sigma_t^{ij}=(1-t)^2\Sigma_i^0+t^2\Sigma_j^1+t(1-t)\big(\Sigma_{01}^{ij}+(\Sigma_{01}^{ij})^{\top}\big)+\eps\,t(1-t)I,
\]
as desired. In particular, \(\Sigma_t^{ij}\in\mathbb S_{++}^d\) for every \(t\in\lcrc{0}{1}\), so \((\Sigma_t^{ij})^{-1}\) is well defined. 
Next, we derive the drift. 
For a Brownian bridge with random endpoints, the drift is~\cite[Chapter 5, Section 6]{ref:Shreve:Karatzas}
\[
u_t^{ij}(x)=\frac{1}{1-t}\big(\E[x_1\mid x_t=x]-x\big) \quad\text{for } t\in [0,1).
\]
Since \((x_t,x_1)\) is jointly Gaussian, the conditional expectation is affine, and thus, we get \(\E[x_1\mid x_t=x]=m_j^1+\operatorname{Cov}(x_1,x_t)(\Sigma_t^{ij})^{-1}(x-m_t^{ij}).\) 
Moreover, \(\operatorname{Cov}(x_1,x_t)=(1-t)(\Sigma_{01}^{ij})^{\top}+t\,\Sigma_j^1.\) 
Therefore,
\[
u_t^{ij}(x)=\frac{m_j^1-m_t^{ij}}{1-t}+\frac{\operatorname{Cov}(x_1,x_t)-\Sigma_t^{ij}}{1-t}(\Sigma_t^{ij})^{-1}(x-m_t^{ij}).
\]
Since \(m_j^1-m_t^{ij}=(1-t)(m_j^1-m_i^0),\) the constant term becomes \(c_{ij}=m_j^1-m_i^0.\) 
Also,
\[
\operatorname{Cov}(x_1,x_t)-\Sigma_t^{ij}=(1-t)\Big((1-t)\big((\Sigma_{01}^{ij})^{\top}-\Sigma_i^0\big)+t\big(\Sigma_j^1-\Sigma_{01}^{ij}-\eps I\big)\Big),
\]
and hence,
\[
\frac{\operatorname{Cov}(x_1,x_t)-\Sigma_t^{ij}}{1-t}=(1-t)\big((\Sigma_{01}^{ij})^{\top}-\Sigma_i^0\big)+t\big(\Sigma_j^1-\Sigma_{01}^{ij}-\eps I\big)=S_t^{ij}.
\]
Thus, we have \(u_t^{ij}(x)=S_t^{ij}(\Sigma_t^{ij})^{-1}(x-m_t^{ij})+c_{ij},\) which proves the affine drift formula in Proposition \ref{prop:GSB:2}.

Finally, by definition of the pairwise cost and the Girsanov identity, we have that
\[
C_{ij}=\int_0^1\int_{\mathbb R^d}\frac12\,\rho_t^{ij}(x)\,\|u_t^{ij}(x)\|^2 \ \dd x \, \dd t.
\]
Since under \(\rho_t^{ij}\) one has \(x_t-m_t^{ij}\sim\mathcal N(0,\Sigma_t^{ij})\), and \(u_t^{ij}(x_t)=A_t^{ij}(x_t-m_t^{ij})+c_{ij},\) the cross term vanishes after expectation, because \(\E[x_t-m_t^{ij}]=0\). 
Hence, \(\E\big[\|u_t^{ij}(x_t)\|^2\big]=\operatorname{tr}\!\Big(A_t^{ij}\Sigma_t^{ij}(A_t^{ij})^{\top}\Big)+\|c_{ij}\|^2.\) 
Therefore,
\[
C_{ij}=\frac12\int_0^1\left[\operatorname{tr}\!\Big(A_t^{ij}\Sigma_t^{ij}(A_t^{ij})^{\top}\Big)+\|c_{ij}\|^2\right] \dd t.
\]
Since \(c_{ij}=m_j^1-m_i^0\) is constant in time, this becomes
\[
C_{ij}=\frac12\|m_j^1-m_i^0\|^2+\frac12\int_0^1\operatorname{tr}\!\Big(A_t^{ij}\Sigma_t^{ij}(A_t^{ij})^{\top}\Big)\,\dd t,
\]
thus completing the proof.
\end{proof}

%% file: refs.bib
@book {ref:Protter:StochInt,
    AUTHOR = {P. E. Protter},
     TITLE = {Stochastic {I}ntegration and {D}ifferential {E}quations},
    SERIES = {Applications of Mathematics (New York)},
    VOLUME = {21},
   EDITION = {Second},
 PUBLISHER = {Springer-Verlag, Berlin},
      YEAR = {2004},
     PAGES = {xiv+415},
NOTE = {doi: \url{https://doi.org/10.1007/978-3-662-10061-5}}
}

@book {ref:LiptserShiryaev:Vol1,
    AUTHOR = {R. S. Liptser and A. N. Shiryaev},
     TITLE = {Statistics of {R}andom {P}rocesses. {I}. {G}eneral {T}heory},
    SERIES = {Stochastic Modelling and Applied Probability, Applications of Mathematics (New York)},
    VOLUME = {5},
   EDITION = {expanded},
 PUBLISHER = {Springer-Verlag, Berlin},
      YEAR = {2001},
     PAGES = {xvi+427},
   NOTE = {doi: \url{https://doi.org/10.1007/978-3-662-13043-8}}
}

@book {ref:Dupuis:LargeDev,
    AUTHOR = {P. Dupuis and R. S. Ellis},
     TITLE = {A {W}eak {C}onvergence {A}pproach to the {T}heory of {L}arge {D}eviations},
    SERIES = {Wiley Series in Probability and Statistics: Probability and
              Statistics},
 PUBLISHER = {John Wiley \& Sons, Inc., New York},
      YEAR = {1997},
     PAGES = {xviii+479},
       NOTE = {doi: \url{https://doi.org/10.1002/9781118165904}},
}

@book {ref:Rud-Analysis,
    AUTHOR = {W. Rudin},
     TITLE = {{P}rinciples of {M}athematical {A}nalysis},
    SERIES = {International Series in Pure and Applied Mathematics},
   EDITION = {Third},
PUBLISHER = {McGraw-Hill Book Co., New York-Auckland-D\"usseldorf},
      YEAR = {1976},
     PAGES = {x+342},
   MRCLASS = {26-02},
  NOTE = {doi: \url{https://doi.org/10.1017/S0013091500008889}}
}

@book{ref:FolReal-99,
  title={Real {A}nalysis: {M}odern {T}echniques and their {A}pplications},
  author={G. B. Folland},
  year={1999},
  publisher={John Wiley \& Sons},
  NOTE = {URL: \url{https://tinyurl.com/nycd2y2y}},
}

@book {ref:RevuzYor,
    AUTHOR = {D. Revuz and M. Yor},
     TITLE = {Continuous {M}artingales and {B}rownian {M}otion},
    SERIES = {Grundlehren der mathematischen Wissenschaften [Fundamental
              Principles of Mathematical Sciences]},
    VOLUME = {293},
   EDITION = {Third},
 PUBLISHER = {Springer-Verlag, Berlin},
      YEAR = {1999},
     PAGES = {xiv+602},
       NOTE = {doi: \url{https://doi.org/10.1007/978-3-662-06400-9}},
}

@book {ref:BainCrisan:Filtering,
    AUTHOR = {A. Bain and D. Crisan},
     TITLE = {Fundamentals of {S}tochastic {F}iltering},
    SERIES = {Stochastic Modelling and Applied Probability},
    VOLUME = {60},
 PUBLISHER = {Springer, New York},
      YEAR = {2009},
     PAGES = {xiv+390},
       NOTE = {doi: \url{https://doi.org/10.1007/978-0-387-76896-0}},
}

@book {ref:Stoc:Control:book,
    AUTHOR = {J. Yong and X. Y. Zhou},
     TITLE = {Stochastic {C}ontrols: {H}amiltonian {S}ystems and {HJB} {E}quations},
    SERIES = {Applications of Mathematics (New York)},
    VOLUME = {43},
 PUBLISHER = {Springer-Verlag, New York},
      YEAR = {1999},
     PAGES = {xxii+438},
       NOTE = {doi: \url{https://doi.org/10.1007/978-1-4612-1466-3}},
}

@article{ref:GoWithTheFlow,
  title={Go with the flow: Fast diffusion for {G}aussian mixture models},
  author={G. Rapakoulias and A. R. Pedram and F. Liu and L. Zhu and P. Tsiotras},
  journal={Advances in Neural Information Processing Systems},
  volume={38},
  pages={92879--92916},
  year={2026},
  NOTE = {URL: \url{https://openreview.net/forum?id=bmznY5wYXH}},
}

@misc{ref:UnOTDenCon:Extended:TAC,
      title={Globally Solving Unbalanced Optimal Transport and Density Control
for {G}aussian Distributions}, 
      author={H. Nakashima and S. Ganguly and K. Kashima},
      year={2026},
      eprint={2605.04246},
      archivePrefix={arXiv},
      primaryClass={math.OC},
      NOTE = {doi: \url{https://doi.org/10.48550/arXiv.2605.04246}}, 
}

@article{ref:mikami2006duality,
  title={Duality theorem for the stochastic optimal control problem},
  author={T. Mikami and M. Thieullen},
  journal={Stochastic processes and their applications},
  volume={116},
  number={12},
  pages={1815--1835},
  year={2006},
  publisher={Elsevier},
  NOTE = {doi: \url{https://doi.org/10.1016/j.spa.2006.04.014}}
}

@article{ref:mikami2008optimal,
  title={Optimal transportation problem by stochastic optimal control},
  author={T. Mikami and M. Thieullen},
  journal={SIAM Journal on Control and Optimization},
  volume={47},
  number={3},
  pages={1127--1139},
  year={2008},
  publisher={SIAM},
  NOTE = {doi: \url{https://doi.org/10.1137/050631264}}
}

@book {ref:Mikami:OT:book,
    AUTHOR = {T. Mikami},
     TITLE = {Stochastic {O}ptimal {T}ransportation --- {S}tochastic {C}ontrol with
              {F}ixed {M}arginals},
    SERIES = {SpringerBriefs in Mathematics},
 PUBLISHER = {Springer, Singapore},
      YEAR = {2021},
     PAGES = {xi+121},
       NOTE = {doi: \url{https://doi.org/10.1007/978-981-16-1754-6}},
}

@article{ref:GR:AP:PS:MF-SB,
  title={Steering Large Agent Populations Using Mean-Field {S}chr{\"o}dinger Bridges With {G}aussian Mixture Models},
  author={G. Rapakoulias and A. R. Pedram and P. Tsiotras},
  journal={IEEE Control Systems Letters},
  year={2025},
  publisher={IEEE},
  NOTE = {doi: \url{https://doi.org/10.1109/LCSYS.2025.3581859}}
}

@book {ref:Shreve:Karatzas,
    AUTHOR = {I. Karatzas and S. E. Shreve},
     TITLE = {Brownian {M}otion and {S}tochastic {C}alculus},
    SERIES = {Graduate Texts in Mathematics},
    VOLUME = {113},
   EDITION = {Second},
 PUBLISHER = {Springer-Verlag, New York},
      YEAR = {1991},
     PAGES = {xxiv+470},
      ISBN = {0-387-97655-8},
       NOTE = {doi: \url{https://doi.org/10.1007/978-1-4612-0949-2}},
}

@inproceedings{ref:cattiaux1994minimization,
  title={Minimization of the {K}ullback information of diffusion processes},
  author={P. Cattiaux and C. L{\'e}onard},
  booktitle={Annales de l'IHP Probabilit{\'e}s et statistiques},
  volume={30},
  number={1},
  pages={83--132},
  year={1994},
  NOTE = {URL: \url{https://www.numdam.org/item/AIHPB_1994__30_1_83_0/}}
}

@article {ref:Leonard:SB,
    AUTHOR = {C. L\'eonard},
     TITLE = {A survey of the {S}chr\"odinger problem and some of its
              connections with optimal transport},
   JOURNAL = {Discrete and Continuous Dynamical Systems},
    VOLUME = {34},
      YEAR = {2014},
    NUMBER = {4},
     PAGES = {1533--1574},
      ISSN = {1078-0947,1553-5231},
   MRCLASS = {60J25 (46N10 60F10)},
MRREVIEWER = {Nicolas\ Juillet},
        NOTE = {doi: \url{https://doi.org/10.3934/dcds.2014.34.1533}},
}

@inproceedings{ref:SB:1,
author = {J. Garg and X. Zhang and Q. Zhou},
title = {Soft-constrained {S}chr\"odinger Bridge: a Stochastic Control Approach},
year = {2024},
publisher = {PMLR},
booktitle = {Proceedings of the 27th International Conference on 
             Artificial Intelligence and Statistics (AISTATS)},
pages = {4429–4437},
volume = {238},
address = {Valencia, Spain},
NOTE = {URL: \url{https://proceedings.mlr.press/v238/garg24a.html}}
}

@inproceedings{ref:SB:3,
author = {N. Gushchin and S. Kholkin and E. Burnaev and A. Korotin},
title = {Light and Optimal {S}chr\"odinger Bridge Matching},
year = {2024},
publisher = {JMLR.org},
booktitle = {Proceedings of the 41st International Conference on Machine Learning},
number = {680},
address = {Vienna, Austria},
pages = {17100--17122},
NOTE = {URL: \url
{https://openreview.net/forum?id=EWJn6hfZ4J}}
}

@ARTICLE{ref:SB:4,
  author={A. Eldesoukey and O. M. Miangolarra and T. T. Georgiou},
  journal={IEEE Control Systems Letters}, 
  title={An Excursion Onto {S}chrödinger’s Bridges: Stochastic Flows With Spatio-Temporal Marginals}, 
  year={2024},
  volume={8},
  number={},
  pages={1138-1143},
  NOTE = {doi: \url{https://doi.org/10.1109/LCSYS.2024.3409107}}
}

@article{ref:Chen:UOT:2,
author = {Y. Chen and T. T. Georgiou and M. Pavon},
title = {Optimal Survival Strategies for Diffusive Flows: A {S}chrödinger Bridge Approach to Unbalanced Transport},
journal = {SIAM Review},
volume = {67},
number = {3},
pages = {579-604},
year = {2025},
NOTE = {doi: \url{https://doi.org/10.1137/25M176581X}},
}

@article{ref:SOC:SB:YC,
  title={Stochastic control liaisons: {R}ichard {S}inkhorn meets {G}aspard {M}onge on a {S}chr\"{o}dinger bridge},
  author={Y. Chen and T. T. Georgiou and M. Pavon},
  journal={SIAM Review},
  volume={63},
  number={2},
  pages={249--313},
  year={2021},
  publisher={SIAM},
  NOTE = {doi: \url{https://doi.org/10.1137/20M1339982}}
}

@article{ref:KI:KK:Disc:SB,
    author = {K. Ito and K. Kashima},
    title = {Maximum Entropy Optimal Density Control of Discrete-Time Linear Systems and {S}chr\"{o}dinger Bridges},
    journal = {IEEE Transactions on Automatic Control},
    year = {2023},
    pages = {1536--1551},
    NOTE = {doi: \url{https://doi.org/10.1109/TAC.2023.3305319}}
}

@article{ref:caluya2021wasserstein,
  title={Wasserstein proximal algorithms for the {S}chr{\"o}dinger bridge problem: Density control with nonlinear drift},
  author={K. F. Caluya and A. Halder},
  journal={IEEE Transactions on Automatic Control},
  volume={67},
  number={3},
  pages={1163--1178},
  year={2021},
  publisher={IEEE},
NOTE = {doi: \url{https://doi.org/10.1109/TAC.2021.3060704}},
}

@article{ref:YC:TG:MP:SB-and-OT,
  title={On the relation between optimal transport and {S}chr{\"o}dinger bridges: A stochastic control viewpoint},
  author={Y. Chen and T. T. Georgiou and M. Pavon},
  journal={Journal of Optimization Theory and Applications},
  volume={169},
  number={2},
  pages={671--691},
  year={2016},
  publisher={Springer},
  NOTE = {doi: \url{https://doi.org/10.1007/s10957-015-0803-z}},
}

@misc{ref:GP:MC:OT:book,
      title={Computational {O}ptimal {T}ransport}, 
      author={G. Peyré and M. Cuturi},
      year={2020},
      eprint={1803.00567},
      archivePrefix={arXiv},
      primaryClass={stat.ML},
      NOTE = {doi: \url{https://doi.org/10.48550/arXiv.1803.00567}}, 
}

@misc{ref:OT:Diff:GP,
      title={Optimal and Diffusion Transports in Machine Learning}, 
      author={G. Peyré},
      year={2025},
      eprint={2512.06797},
      archivePrefix={arXiv},
      primaryClass={math.OC},
      NOTE = {doi: \url{https://doi.org/10.48550/arXiv.2512.06797}}, 
}

@InProceedings{ref:SB:closed:form,
  title = 	 {The {S}chr\"{o}dinger Bridge between {G}aussian Measures has a Closed Form},
  author =       {C. Bunne and Y.-P. Hsieh and M. Cuturi and A. Krause},
  booktitle = 	 {Proceedings of The 26th International Conference on Artificial Intelligence and Statistics},
  pages = 	 {5802--5833},
  year = 	 {2023},
  editor = 	 {Ruiz, Francisco and Dy, Jennifer and van de Meent, Jan-Willem},
  volume = 	 {206},
  series = 	 {Proceedings of Machine Learning Research},
  month = 	 {25--27 Apr},
  publisher =    {PMLR},
  NOTE = 	 {URL: \url{https://proceedings.mlr.press/v206/bunne23a.html}}
}

@inproceedings{liu2022deep,
 author = {G.-H. Liu and T. Chen and O. So and E. A. Theodorou},
 booktitle = {Advances in Neural Information Processing Systems},
 address={Louisiana, LA},
 pages = {9374--9388},
 publisher = {Curran Associates, Inc.},
 title = {Deep Generalized {Schr\"odinger} Bridge},
 volume = {35},
 year = {2022},
 NOTE = {URL: \url{https://openreview.net/forum?id=fp33Nsh0O5}}
}

@inproceedings{liu2024generalized,
  title={Generalized {S}chr{\"o}dinger bridge matching},
  author={G.-H. Liu and Y. Lipman and M. Nickel and B. Karrer and E. A. Theodorou and R. T. Q. Chen},
  booktitle={International Conference on Learning Representations},
  year={2024},
  address={Vienna, Austria},
  NOTE = {URL: \url{https://openreview.net/forum?id=SoismgeX7z}}
}

@article{ruthotto2021introduction,
author = {L. Ruthotto and E. Haber},
title = {An introduction to deep generative modeling},
journal = {GAMM-Mitteilungen},
volume = {44},
number = {2},
pages = {e202100008},
NOTE = {doi: \url{https://doi.org/10.1002/gamm.202100008}},
year = {2021}
}

@inproceedings{song2020score,
  title={Score-Based Generative Modeling through Stochastic Differential Equations},
  author={Y. Song and J. Sohl-Dickstein and D. P. Kingma and A. Kumar and S. Ermon and B. Poole},
  booktitle={International Conference on Learning Representations},
  year={2021},
  address={held virtually},
  NOTE={URL: \url{https://openreview.net/forum?id=PxTIG12RRHS}}
}

@inproceedings{lipman2022flow,
title={Flow Matching for Generative Modeling},
author={Y. Lipman and R. T. Q. Chen and H. Ben-Hamu and M. Nickel and M. Le},
booktitle={The Eleventh International Conference on Learning Representations},
year={2023},
NOTE = {URL: \url{https://openreview.net/forum?id=PqvMRDCJT9t}}
}

@ARTICLE{ref:SB:halder:exact,
  author={A. M. H. Teter and W. Wang and A. Halder},
  journal={IEEE Transactions on Automatic Control}, 
  title={Schr\"{o}dinger Bridge With Quadratic State Cost is Exactly Solvable}, 
  year={2026},
  volume={71},
  number={5},
  pages={2903-2917},
  NOTE = {doi: \url{https://doi.org/10.1109/TAC.2025.3631521}}
  }

@misc{ref:adj:match:soc,
      title={Adjoint Matching through the Lens of the Stochastic Maximum Principle in Optimal Control}, 
      author={C. Domingo-Enrich and J. Han},
      year={2026},
      eprint={2604.08580},
      archivePrefix={arXiv},
      primaryClass={math.OC},
      NOTE={URL: \url{https://arxiv.org/abs/2604.08580}}, 
}

@ARTICLE{chen2016optimal3,
  author={Y. Chen and T. T. Georgiou and M. Pavon},
  journal={IEEE Transactions on Automatic Control}, 
  title={Optimal Steering of a Linear Stochastic System to a Final Probability Distribution --- {P}art {III}}, 
  year={2018},
  volume={63},
  number={9},
  pages={3112-3118},
  NOTE = {doi: \url{10.1109/TAC.2018.2791362}}
  }

@inproceedings{de2021diffusion,
  title={Diffusion {S}chr\"odinger bridge with applications to score-based generative modeling},
  author={V. De Bortoli and J. Thornton and J. Heng and A. Doucet},
  booktitle={Advances in Neural Information Processing Systems},
  volume={34},
  pages={17695--17709},
  year={2021},
  address={held virtually},
  NOTE = {URL: \url{https://openreview.net/forum?id=9BnCwiXB0ty}}
}

@article{tong2023simulation,
  title={Simulation-free {S}chr\"{o}dinger bridges via score and flow matching},
  author={A. Tong and N. Malkin and K. Fatras and L. Atanackovic and Y. Zhang and G. Huguet and G. Wolf and Y. Bengio},
  volume = {238},
  journal={Proceedings of the 27th International
Conference on Artificial Intelligence and Statistics (AISTATS) 2024, Valencia, Spain.},
  year={2023},
  NOTE = {doi: \url{https://doi.org/10.48550/arXiv.2307.03672}}
}

@article{follmer1988random,
  title={Random fields and diffusion processes},
  author={H. F{\"o}llmer},
  journal={Lecture Notes in Mathematics},
  volume={1362},
  pages={101--204},
  year={1988},
  NOTE = {doi: \url{https://doi.org/10.1007/BFb0086180}}
}

@inproceedings{shi2023diffusion,
  title={Diffusion {S}chr{\"o}dinger bridge matching},
  author={Y. Shi and V. De Bortoli and A. Campbell and A. Doucet},
  booktitle={Advances in Neural Information Processing Systems},
  volume={36},
  pages = {62183--62223},
  publisher = {Curran Associates, Inc.},
  year={2023},
  NOTE = {URL: \url{https://openreview.net/forum?id=qy07OHsJT5}}
}

@inproceedings{ref:dai:pra:1990markov,
  title={On the {M}arkov processes of {S}chr{\"o}dinger, the {F}eynman-{K}ac formula and stochastic control},
  author={P. D. Pra and M. Pavon},
  booktitle={Realization and Modelling in System Theory: Proceedings of the International Symposium MTNS-89, Volume I},
  pages={497--504},
  year={1990},
  organization={Springer},
  NOTE = {doi: \url{https://doi.org/10.1007/978-1-4612-3462-3_55}}
}

@article{ref:dai:pra:1991stochastic,
  title={A stochastic control approach to reciprocal diffusion processes},
  author={P. D. Pra},
  journal={Applied Mathematics and Optimization},
  volume={23},
  number={1},
  pages={313--329},
  year={1991},
  publisher={Springer},
  NOTE = {doi: \url{https://doi.org/10.1007/BF01442404}}
}

@article{mallasto2022entropy,
  title={Entropy-regularized 2-{W}asserstein distance between {G}aussian measures},
  author={A. Mallasto and A. Gerolin and H. Q. Minh},
  journal={Information Geometry},
  volume={5},
  number={1},
  pages={289--323},
  year={2022},
  publisher={Springer},
  NOTE = {doi: \url{https://doi.org/10.1007/s41884-021-00052-8}}
}

@inproceedings{red:SOC:VP:I,
 author = {C. Domingo-Enrich and J. Han and B. Amos and J. Bruna and R. T. Q. Chen},
 booktitle = {Advances in Neural Information Processing Systems},
 editor = {A. Globerson and L. Mackey and D. Belgrave and A. Fan and U. Paquet and J. Tomczak and C. Zhang},
 pages = {112459--112504},
 publisher = {Curran Associates, Inc.},
 title = {Stochastic Optimal Control Matching},
 NOTE = {doi: \url{https://proceedings.neurips.cc/paper_files/paper/2024/file/cc32ec39a5073f61d38c338d963df30d-Paper-Conference.pdf}},
 volume = {37},
 year = {2024}
}

@inproceedings{ref:SOC:VP:II,
title={Adjoint Matching: Fine-tuning Flow and Diffusion Generative Models with Memoryless Stochastic Optimal Control},
author={C. Domingo-Enrich and M. Drozdzal and B. Karrer and R. T. Q. Chen},
booktitle={The Thirteenth International Conference on Learning Representations},
year={2025},
NOTE = {doi: \url{https://openreview.net/forum?id=xQBRrtQM8u}}
}

@misc{ref:SOC:VP:IV,
      title={Flow Matching for Measure Transport and Feedback Stabilization of Control-Affine Systems}, 
      author={K. Elamvazhuthi},
      year={2026},
      eprint={2510.02706},
      archivePrefix={arXiv},
      primaryClass={math.OC},
      NOTE = {doi: \url{https://doi.org/10.48550/arXiv.2510.02706}}, 
}

@INPROCEEDINGS{ref:LQR:SB,
  author={M. Lambert},
  booktitle={2025 IEEE 64th Conference on Decision and Control (CDC)}, 
  title={The {LQR}-{S}chr\"{o}dinger Bridge}, 
  year={2025},
  volume={},
  number={},
  pages={3149-3156},
  NOTE = {doi: \url{https://doi.org/10.1109/CDC57313.2025.11312252}}
  }

@article{ref:VSB:Diff:1,
author = {V. S. Borkar},
title = {{Controlled diffusion processes}},
volume = {2},
journal = {Probability Surveys},
publisher = {Institute of Mathematical Statistics and Bernoulli Society},
pages = {213 -- 244},
year = {2005},
NOTE = {doi: \url{https://doi.org/10.1214/154957805100000131}}
}

@book{ref:AA:VSB:Control:Diff, 
place={Cambridge}, 
series={Encyclopedia of Mathematics and its Applications}, 
title={Ergodic {C}ontrol of {D}iffusion {P}rocesses}, 
publisher={Cambridge University Press},
author={A. Arapostathis and V. S. Borkar and M. K. Ghosh},
year={2011},
collection={Encyclopedia of Mathematics and its Applications},
NOTE = {doi: \url{https://doi.org/10.1017/CBO9781139003605}},
}

@article{delon_wasserstein-type_2020,
  title={A {W}asserstein-type distance in the space of {G}aussian mixture models},
  author={J. Delon and A. Desolneux},
  journal={SIAM Journal on Imaging Sciences},
  volume={13},
  number={2},
  pages={936--970},
  year={2020},
  publisher={SIAM},
  NOTE = {doi: \url{https://doi.org/10.1137/19M1301047}}
}

@InProceedings{mei_flow_2024,
  title = 	 {Flow matching for stochastic linear control systems},
  author =       {Y. Mei and M. Al-Jarrah and A. Taghvaei and Y. Chen},
  booktitle = 	 {Proceedings of the 7th Annual Learning for Dynamics \& Control Conference},
  pages = 	 {484--496},
  year = 	 {2025},
  volume = 	 {283},
  month = 	 {04--06 Jun},
  publisher =    {PMLR},
  NOTE = 	 {URL: \url{https://proceedings.mlr.press/v283/mei25a.html}}
}

@article{chen2018optimal,
  title={Optimal transport for {G}aussian mixture models},
  author={Y. Chen and T. T. Georgiou and A. Tannenbaum},
  journal={IEEE Access},
  volume={7},
  pages={6269--6278},
  year={2018},
  publisher={IEEE},
  NOTE = {doi: \url{https://doi.org/10.1109/ACCESS.2018.2889838}}
}

@ARTICLE{chen2016optimal,
  author={Y. Chen and T. T. Georgiou and M. Pavon},
  journal={IEEE Transactions on Automatic Control}, 
  title={Optimal Transport Over a Linear Dynamical System}, 
  year={2017},
  volume={62},
  number={5},
  pages={2137-2152},
  NOTE ={doi: \url{https://doi.org/10.1109/TAC.2016.2602103}},
  }

@misc{ref:OptStop:AS:SG:AP:PT,
      title={Nonlinear Stochastic Optimal Control and Optimal Stopping using the {F}okker--{P}lanck Transformation}, 
      author={A. Selim and S. Ganguly and A. Pakniyat and P. Tsiotras},
      year={2026},
      eprint={2604.12153},
      archivePrefix={arXiv},
      primaryClass={math.OC},
      NOTE ={doi: \url{https://doi.org/10.48550/arXiv.2604.12153}}, 
}

@ARTICLE{ref:SB:original:I,
  author={E. Schr\"{o}dinger},
  journal={Sitzungsberichte der Preuss. Phys. Math. Klasse}, 
  title={Über die umkehrung der naturgesetze}, 
  year={1931},
  volume={10},
  pages={144--153},
  NOTE ={URL: \url{https://leonard.perso.math.cnrs.fr/papers/1931-Schroedinger-Ueber%20die%20Umkehrung%20der%20Naturgesetze.pdf}},
  }

@article{ref:SB:original:II,
     author = {E. Schr\"{o}dinger},
     title = {Sur la th\'{e}orie relativiste de l'\'electron et l'interpr\'etation de la m\'ecanique quantique},
     journal = {Annales de l'institut Henri Poincar\'e},
     pages = {269--310},
     year = {1932},
     publisher = {INSTITUT HENRI POINCAR\'E ET LES PRESSES UNIVERSITAIRES DE FRANCE},
     volume = {2},
     number = {4},
     zbl = {0004.42505},
     language = {fr},
     NOTE = {URL: \url{https://www.numdam.org/item/AIHP_1932__2_4_269_0/}}
}

@misc{ref:AH:Lie:Group:SB,
      title={Schr\"{o}dinger Bridge Over A Compact Connected {L}ie Group}, 
      author={H. Mahmood and A. Halder and A. Akhtar},
      year={2026},
      eprint={2603.14049},
      archivePrefix={arXiv},
      primaryClass={math.OC},
      NOTE={URL: \url{https://arxiv.org/abs/2603.14049}}, 
}

@misc{ref:Adu:SB:Average:Sys,
      title={Schr\"{o}dinger Bridge over Averaged Systems}, 
      author={D. O. Adu and Y. Chen},
      year={2024},
      eprint={2412.03294},
      archivePrefix={arXiv},
      primaryClass={math.OC},
      NOTE={URL: \url{https://arxiv.org/abs/2412.03294}}, 
}

@misc{ref:Adu:SB:SubRiemann,
      title={From {S}chr\"{o}dinger Bridge to Optimal Transport over Sub-{R}iemannian Manifolds}, 
      author={D. O. Adu and K. Elamvazhuthi and B. Gharesifard},
      year={2026},
      eprint={2605.11429},
      archivePrefix={arXiv},
      primaryClass={math.OC},
      NOTE = {doi: \url{https://doi.org/10.48550/arXiv.2605.11429}}, 
}

@book{ref:santambrogio2023course,
  title={A {C}ourse in the {C}alculus of {V}ariations: {O}ptimization, {R}egularity, and {M}odeling},
  author={F. Santambrogio},
  year={2023},
NOTE = {doi: \url{https://doi.org/10.1007/978-3-031-45036-5}},
pages={338+XXI},
  publisher={Universitext, Springer Nature}
}

@book {ref:Santam:OT:book,
    AUTHOR = {F. Santambrogio},
     TITLE = {Optimal {T}ransport for {A}pplied {M}athematicians: {C}alculus of {V}ariations, {PDE}s, and {M}odeling},
    SERIES = {Progress in Nonlinear Differential Equations and their
              Applications},
    VOLUME = {87},
 PUBLISHER = {Birkh\"auser/Springer, Cham},
      YEAR = {2015},
     PAGES = {xxvii+353},
       NOTE = {doi: \url{https://doi.org/10.1007/978-3-319-20828-2}},
}

@book{ref:OT:book:Villani,
    AUTHOR = {C. Villani},
     TITLE = {Topics in {O}ptimal {T}ransportation},
    SERIES = {Graduate Studies in Mathematics},
    VOLUME = {58},
 PUBLISHER = {American Mathematical Society, Providence, RI},
      YEAR = {2003},
     PAGES = {xvi+370},
     NOTE = {doi: \url{https://doi.org/10.1090/gsm/058}},
}

@ARTICLE{ref:AE:TTG:SB:Stop:Time,
  author={A. Eldesoukey and T. T. Georgiou},
  journal={IEEE Transactions on Automatic Control}, 
  title={Schr\"{o}dinger's Control and Estimation Paradigm With Spatio-Temporal Distributions on Graphs}, 
  year={2025},
  volume={70},
  number={4},
  pages={2466-2478},
  NOTE = {doi: \url{10.1109/TAC.2024.3485537}},
  }
